\documentclass{amsart}
\usepackage{etex}
\usepackage{amsthm,stmaryrd}
\usepackage{amssymb}
\usepackage{epsfig}
\usepackage[usenames,dvipsnames]{color}
\usepackage{verbatim}
\usepackage{hyperref}
\usepackage{mathrsfs}
\usepackage[all]{xy}
\usepackage{xcolor}
\usepackage{enumerate}

\usepackage{pst-node}
\usepackage{tikz-cd}

\newcommand{\End}{\operatorname{End}}
\newcommand{\Hom}{\operatorname{Hom}}

\newcommand{\Negl}{\operatorname{Negl}}
\newcommand{\Id}{\operatorname{Id}}

\newcommand{\Z}{\mathbb{Z}}

\newcommand{\R}{\mathbb{R}}
\newcommand{\C}{\mathbb{C}}
\newcommand{\N}{\mathbb{N}}
\newcommand{\sll}{\mathfrak{sl}}

\newcommand{\p}[1]{\ensuremath{|{#1}|}}

\newcommand{\slto}{{\mathfrak{sl}(2|1)}}
\newcommand{\Uq}{{U_q(\slto)}}
\newcommand{\UqH}{{U_q^H(\slto)}}
\newcommand{\UqHa}{{U_q^H}}
\newcommand{\cat}{\mathscr{C}}

\newcommand{\catd}{\mathscr{D}}
\newcommand{\catdH}{\mathscr{D}^H}
\newcommand{\Gr}{G}
\newcommand{\X}{\ensuremath{\mathcal{X}}}
\newcommand{\XX}{{\ensuremath{\mathcal{Y}}}}

\newcommand{\qd}{\operatorname{\mathsf{d}}}
\newcommand{\rt}{l}
\newcommand{\A}{\ensuremath{\mathsf{A}}}
\newcommand{\bb}{\operatorname{\mathsf{b}}}
\newcommand{\ang}[1]{{\left\langle{#1}\right\rangle}}

\newcommand{\unit}{\ensuremath{\mathbb{I}}}

\newcommand{\tcoev}{\stackrel{\longleftarrow}{\operatorname{coev}}}
\newcommand{\tev}{\stackrel{\longleftarrow}{\operatorname{ev}}}
\newcommand{\ev}{\stackrel{\longrightarrow}{\operatorname{ev}}}
\newcommand{\coev}{\stackrel{\longrightarrow}{\operatorname{coev}}}
\newcommand{\T}{{\mathcal{T}}}
\newcommand{\D}{{\mathcal{D}}}
\newcommand{\FK}{{\Bbbk}}
\newcommand{\ideal}{\mathcal{I}} 
\newcommand{\qt}{\operatorname{\mathsf{t}}}
\newcommand{\qtN}{\operatorname{\mathsf{t}^N}}
\newcommand{\qdN}{\operatorname{\mathsf{d}^N}}

\newcommand{\bp}[1]{{\left(#1\right)}}

\newcommand{\Ffun}{\ensuremath{\mathsf{F}}}

\newcommand{\LL}{\mathcal{L}}

\newcommand{\I}{\mathcal{I}}
\newcommand{\qdim}{\operatorname{qdim}}
\renewcommand{\wp}{{\Phi}}
\newcommand{\B}{{\mathsf{B}}}

\newtheorem{definition}{Definition}[subsection]
\newtheorem{theorem}[definition]{Theorem}

\newtheorem{proposition}[definition]{Proposition}
\newtheorem{lemma}[definition]{Lemma}
\newtheorem{remark}[definition]{Remark}

\newtheorem{corollary}[definition]{Corollary}
\newcounter{exo} \newcounter{numexercice}
\renewcommand{\theexo}{\arabic{exo}}

\newcounter{IntroCounter}
\stepcounter{IntroCounter}

%
%

\begin{document}
\title[Modified Turaev-Viro Invariants from quantum $\slto$]{Modified Turaev-Viro Invariants\\ from quantum $\slto$} 
  \author{Cristina Ana-Maria Anghel}
\address{Univ Paris Diderot, Sorbonne Paris Cit\'e, IMJ-PRG, UMR 7586 CNRS, F-75013, Paris, France} \email{cristina.anghel@imj-prg.fr; simple\_words91@yahoo.com} 
\author{Nathan Geer}
\address{Mathematics \& Statistics\\
  Utah State University \\
  Logan, Utah 84322, USA} \email{nathan.geer@gmail.com} 
  \thanks{ This project began at a workshop hosted by Anna Beliakova with the support of NCCR SwissMAP founded by the Swiss National Science Foundation.    Research of NG was partially supported by NSF grants DMS-1308196 and DMS-1452093.  He would also like to thank the CRNS for its support.  
 The research of CA was supported by grants from R\'egion Ile-de-France.  
She would also like to thank the Isaac Newton Institute for Mathematical Sciences, Cambridge, for support and hospitality during the programme 
Homology theories in low dimensional topology where work on this paper was undertaken between 12 February-28 March 2017. During this time CA was also supported by EPSRC grant no EP/K032208/1.
Finally, we would like to thank Bertrand Patureau-Mirand for his insights.   
  }
  \date{\today}

\begin{abstract}
The category of finite dimensional module over the quantum superalgebra $\Uq$ is not semi-simple and the quantum dimension  of a generic $\Uq$-module vanishes.  This vanishing happens for any value of $q$ (even when $q$ is not a root of unity).    These properties make it difficult to create a fusion or modular category.   Loosely speaking, the standard way to obtain such a category from a quantum group is: 1) specialize $q$ to a root of unity; this forces some modules to have zero quantum dimension, 2) quotient by morphisms of modules with zero quantum dimension, 3) show the resulting category is finite and semi-simple.  In this paper we show an analogous construction works in the context of $\Uq$ by replacing the vanishing quantum dimension with a modified quantum dimension.  In particular, we specialize $q$ to a root of unity, quotient by morphisms of modules with zero modified quantum dimension and show the resulting category is generically finite semi-simple.   Moreover, we show the categories of this paper are relative $\Gr$-spherical categories.  As a  consequence we obtain invariants of 3-manifold with additional structures.  
\end{abstract}

\maketitle
\setcounter{tocdepth}{1}

\section{Introduction} 
The numerical $6j$-symbols associated with the  Lie algebra
$\sll_2(\C)$ were first introduced in theoretical physics by Eugene
Wigner in 1940 and Giulio (Yoel) Racah in 1942.  These symbols have been generalized to quantum $6j$-symbols coming from tensor categories.   If the category is fusion and spherical then the quantum $6j$-symbols
lead to Turaev-Viro invariants of 3-manifold (see \cite{BW,BW+,KRR,TV,Tu}).  
The prototype of such a topological invariant arises from a particular category of modules over the quantum algebra $U_q(\sll_2(\C))$.

Let us describe the T-V construction for this example.  
  Without modification the category of finite dimensional modules over $U_q(\sll_2(\C))$ is not fusion.  If $q$ is generic then there are an infinite number of non-isomorphic simple modules.  When $q$ is a root of unity then the quantum dimension of some of these modules becomes zero.  Loosely speaking, by taking the quotient of such modules one obtains a category with a finite number of simple modules.  More precisely,   
taking the quotient of the category of $U_q(\sll_2(\C))$-modules by negligible morphisms one obtains the desired spherical and fusion category $\mathscr{S}$.     
Here a morphism $f:V\to W$ is negligible if for all morphisms $g:W\to V$ we have
$$Tr_q(f\circ g)=0$$
where $Tr_q$ is the quantum trace.   If $V$ is a simple module whose quantum dimension $\text{qdim}(V)=Tr_q(\Id_V)$ is zero then any morphism to or from $V$ is negligible; such a module is called negligible.  The simple modules which are not negligible are said to be in the alcove.  

The Turaev-Viro invariant is defined as a certain
state sum computed on an arbitrary triangulation of a $3$-manifold.
 The state sum on a
triangulation $\T$ of a closed $3$-manifold $M$ is defined, roughly
speaking, as follows:   Consider states of $\T$ which are maps 
from the edges of $\T$ to a finite index set $I$ corresponding to
isomorphism class of simple objects in the category $\mathscr{S}$.   Given a state $\varphi:\{\text{edges of $\T$}\}\to I$ 
one associates with each tetrahedron $T$ of $\T$ a particular
quantum $6j$-symbol denoted by $|T|_{\varphi}$.  The state sum is defined by taking the product of these
symbols over all tetrahedra of $\T$ and summing up the resulting
products (with certain weights) over all $I$ colorings of $\T$:
 \begin{equation}\label{E:DefUsualTV}
TV(\T)=\D^{-2v}
 \sum_{\varphi\text{ state }}
   \left( \prod_{e\in\{\text{edges of } \T\}}
{\text{qdim}(\varphi(e))} \right)\left(\prod_{T\in\{\text{tetrahedron of } \T\}}{
|T|_{\varphi}}\right)
 \end{equation}
where $\text{qdim}(\varphi(e))$ is the quantum dimension of the simple module associated to $\varphi(e)$ and $\D^2=\sum_{i\in I}\text{qdim}(i)^2$.  

The main point of this construction   is that the state sum $TV(\T)$ is
independent of the choice of triangulation.   This can be verified
in two steps.  First, the quantum $6j$-symbols satisfy the symmetries of the tetrahedron.  Second, any two triangulations of a closed $3$-manifold can be
transformed into one another by a finite sequence of the so called
Pachner moves and an ambient isotopy (see \cite{P}).
 Thus, it is enough to check that the state sum is invariant under the Pachner moves.  For the category $\mathscr{S}$, these moves correspond to well-known algebraic identities which the quantum $6j$-symbols satisfy.

Obstructions to applying this construction to a general pivotal tensor category $\cat$ include: 
\begin{enumerate}
\item \label{I:ObPiv1}  zero quantum dimensions, 
\item  \label{I:ObPiv3} non-semi-simplicity of $\cat$, 
\item  \label{I:ObPiv4} infinitely many isomorphism classes of simple objects of $\cat$. 
\end{enumerate}
   Kashaev \cite{Kv94} and later Baseilhac and Benedetti
\cite{BB} considered $3$-manifold invariants arising from a 
category with such obstructions, namely the category of modules over
the Borel subalgebra of quantum $\sll(2)$ at a root of unity.  
Geer, Patureau-Mirand, and Turaev \cite{GPT2} gave an alternate general approach to dealing with these obstructions and defined a secondary Turaev-Viro invariant of oriented $3$-manifolds $M$.  This is accomplished by three main modifications of the T-V invariant.  

First, to address obstruction (\ref{I:ObPiv1}) they replace the  vanishing $\text{qdim}$ and $|T|_{\varphi}$ in Equation \eqref{E:DefUsualTV} with corresponding non-zero modified quantum dimension and modified $6j$-symbol, see \cite{GPT,GPT2}.  

The second modification is dealing with 
the last two obstructions.  
If the usual state sum described above is applied to a category with infinitely many isomorphism classes of simple objects, this sum is of course infinite.  With this in mind, the authors of \cite{GPT2} required that the pivotal tensor category $\cat$ had additional structure, including  a $\Gr$-grading on $\cat$ where $\Gr$ is an abelian group.   To overcome the infinite sum problem, a finite number of modules are selected using a cohomology class in $H^1(M,\Gr)$.  This step also addresses obstruction (\ref{I:ObPiv3}) by requiring that generically graded pieces of the category are semi-simple.   

The final modification is to introduce a link $L$ in the manifold $M$.   If one applies the changes described above, in the first two steps the invariant can still be zero or not well defined (see \cite{GPT2}).  In particular, the sum of the modified dimensions over a graded piece is zero, i.e. the analogous quantity associated to $\D^2$ in Equation \eqref{E:DefUsualTV} is zero.  To construct a non-zero invariant one can consider triangulations $\T$ of $M$ that realized the isotopy class of $L$ as a so called Hamiltonian link in $\T$ (see \cite{BB}).     The Hamiltonian link is used to modify the weights in the terms of the state sum.  

Let us be more precise.   The construction of the modified TV-invariant works in the context of a relative $\Gr$-spherical category (see Section \ref{S:CatPrelim} for details):  Let $\Gr$ be an abelian group with a small symmetric subset $\X\subset\Gr$.  Let $\cat= \bigoplus_{g\in\Gr}\cat_g$ be a $\Gr$-graded pivotal $\FK$-category such that 
\begin{enumerate}
  \item $\cat_g$ is finitely
    semi-simple for each
    $g\in\Gr\setminus\X$, 
   \item there exists a  t-ambi pair $(\A,\qd:\A\rightarrow
    \FK^{\times})$ where 
   each object of $\A$ is isomorphic to a unique element of  $\cup_{g\in\Gr\setminus \X}\I_g$   and $\I_g$ represents the isomorphic  classes of simple objects of $\cat_g$,
  \item there exists a map $\bb:\A\to \FK$ satisfying the condition in Definition \ref{D:G-spherical}.
\end{enumerate}
In \cite{GPT2} it is shown that a relative $\Gr$-spherical category (with basic data) gives rise to modified quantum $6j$-symbols.  In this
context, these $6j$-symbols are not numbers but rather tensors on certain  multiplicity spaces.   

Let $M$ be a closed orientable
$3$-manifold and $L$ a link in $M$. Following \cite{BB}, we consider  \emph{$H$-triangulation} $(\T,\LL)$ of $(M,L)$: $\T$ is a quasi-regular triangulation of $M$, $\LL$ is a set of unoriented edges of $\T$ such that every
vertex of $\T$ belongs to exactly two edges of $\LL$ and the union of the edges in $\LL$ is the link $L$.  Let $\wp:\{\text{edges}\}\to \Gr$ be a $\Gr$-valued $1$-cocycle on $\T$ which takes values in $\Gr\setminus \X$.  A {\it state} of the $\Gr$-coloring $\wp$ is a map $\varphi$ assigning to every
oriented edge $e$ of $\T$ an element $\varphi (e)$ of $\I_{\wp(e)}$ such that
$\varphi(-e)=\varphi (e)^*$ for all $e$. 
The set of all states of $\wp$ is denoted
$\operatorname{St}(\wp)$. 

Give a state $\varphi$ and a
tetrahedron $T$ of $\T$ we can associate a modified $6j$-symbols $|T|_\varphi$, for details see \cite{GPT2}.  Any face of $\T$ belongs to exactly
two tetrahedra of $\T$, and the associated multiplicity modules are dual to
each other.  The tensor product of the $6j$-symbols $|T|_\varphi$ associated to all
tetrahedra $T$ of $\T$ can be contracted using this duality.  We denote by
$\operatorname {cntr}$ the tensor product of all these contractions.  Let $\T_1$ be the set
of unoriented edges $\T$ and let $\T_3$ the set of tetrahedra of $\T$. Set
$$
TV(\T,\LL,\wp)=\sum_{\varphi\in \operatorname{St} ( \wp)}\,\, \prod_{e\in\T_1{\setminus \LL}}
\qd(\varphi(e))\, \prod_{e\in \LL} \bb(\varphi(e))\, \, \operatorname {cntr} \left
  (\bigotimes_{T\in\T_3}|T|_{\varphi}\right ) \in \FK.
$$

\begin{theorem}\label{inveee} $
  TV(\T,\LL,\wp)$ depends only on the isotopy class of $L$ in $M$ and the
  cohomology class $[\wp]\in H^1(M,G)$. It does not depend on the choice of
  the $H$-triangulation of $(M,L)$ and on the choice of $\wp$ in its
  cohomology class.
\end{theorem}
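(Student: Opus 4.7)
The plan is to reduce the statement to a finite list of local invariance checks, following the strategy of \cite{GPT2} (and its predecessors \cite{BB,Tu,TV}). By results of Pachner and their extension to $H$-triangulations by Baseilhac–Benedetti, any two $H$-triangulations $(\T,\LL)$ and $(\T',\LL')$ of the same pair $(M,L)$ can be connected by a finite sequence of local combinatorial moves: (i) the $2$–$3$ (and inverse $3$–$2$) bistellar Pachner move performed on tetrahedra disjoint from $\LL$, and (ii) a Hamiltonian lune/bubble-type move that alters how the Hamiltonian link $\LL$ sits inside $\T$ while keeping $L\subset M$ fixed up to isotopy. Similarly, invariance in $[\wp]\in H^1(M,\Gr)$ reduces to showing invariance under changes $\wp\mapsto \wp+\delta f$ for $f$ a $0$-cochain, which one can reduce further to the case where $f$ is supported at a single vertex. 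Independence of the choice of representatives within simple isomorphism classes is automatic from the definition of the contraction $\operatorname{cntr}$ on multiplicity modules.

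For step (i), I would verify invariance of $TV(\T,\LL,\wp)$ under the $2$–$3$ move by expanding both sides. On the tetrahedra involved, the move replaces the contraction of two modified $6j$-symbols with that of three $6j$-symbols; this is exactly the content of the Biedenharn–Elliott pentagon identity satisfied by the modified $6j$-symbols of a relative $\Gr$-spherical category (see \cite{GPT2}). The weight $\qd(\varphi(e))$ for the new interior edge appears precisely because the sum over states of that edge inserts the idempotent $\sum_i \qd(i)\,\pi_i$ coming from the t-ambi structure of $(\A,\qd)$; this is where the t-ambi hypothesis enters crucially.

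For step (ii), the Hamiltonian lune/bubble move is where the $\bb$-weights replace $\qd$-weights on the link edges. Here I would use the defining compatibility of $\bb$ (condition (3) in the relative $\Gr$-spherical structure, cf. Definition \ref{D:G-spherical}) which is engineered precisely so that the state-sum contribution of a tetrahedron meeting $\LL$ collapses correctly: the local model of the move expresses a $\bb$-weighted sum of $6j$-symbols on one side as a single $6j$-symbol on the other. The symmetries of the tetrahedron satisfied by modified $6j$-symbols ensure that this check does not depend on the combinatorial orientation of the tetrahedra involved.

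For the cohomology-class invariance, I would reduce to the elementary coboundary: if $f$ is the $0$-cochain with $f(v_0)=g$ and $f(v)=0$ for $v\neq v_0$, then $(\wp+\delta f)(e)$ differs from $\wp(e)$ only on edges incident to $v_0$. Reindexing the sum over states by the tensor shift $X\mapsto X\otimes V_g$ on simples incident to $v_0$ translates $TV(\T,\LL,\wp+\delta f)$ into $TV(\T,\LL,\wp)$; the needed bookkeeping uses that the graded pieces $\cat_g$ for $g\in\Gr\setminus\X$ are semi-simple so that the relabeling is a bijection of state sets preserving all $\qd$, $\bb$, and $6j$-weights. I expect the main obstacle to lie in step (ii): keeping track of the multiplicity-module contractions through the Hamiltonian move is delicate because the $\bb$-weight must be inserted on exactly the right edge and its compatibility with the surrounding $\qd$-weights on non-link edges must be checked carefully, whereas the $2$–$3$ move is a direct invocation of the pentagon identity.
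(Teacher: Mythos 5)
First, a point of context: the paper does not actually prove Theorem \ref{inveee} --- it is quoted from \cite{GPT2}, and the entire body of the paper is devoted to verifying that $\cat^N$ satisfies the hypotheses (relative $\Gr$-spherical with basic data) under which \cite{GPT2} establishes the theorem. Your outline of the local moves (H-Pachner $2$--$3$ move via the Biedenharn--Elliott identity, lune/bubble-type moves for the Hamiltonian link via the $\bb$-condition, the t-ambi pair entering through the $\qd$-weights on new interior edges) does match the strategy of \cite{GPT2}, so as a reconstruction of the outsourced proof it points in the right direction.

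There are, however, two genuine gaps. (1) Your argument for independence of $\wp$ within its cohomology class would fail as stated. In a general relative $\Gr$-spherical category there is no distinguished simple object $V_g$ in $\cat_g$, and even after choosing one, $X\mapsto X\otimes V_g$ does not induce a bijection $\I_{\wp(e)}\to\I_{\wp(e)+g}$ of isomorphism classes of simples (the tensor product of simples is a direct sum, not a simple), nor would any such relabeling preserve the multiplicity modules and modified $6j$-symbols entering $\operatorname{cntr}$. In \cite{GPT2} this step is instead achieved by realizing the coboundary change through the elementary moves themselves, not by a state-set bijection. (2) You never address admissibility: every intermediate $\Gr$-coloring occurring in the chain of moves must take values in $\Gr\setminus\X$ on \emph{every} edge, since otherwise $\qd$, the finite semi-simplicity of the graded piece, and the modified $6j$-symbols are simply unavailable. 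Arranging this is exactly where quasi-regularity of the $H$-triangulations and smallness of $\X$ are used, and it is a substantive part of the proof rather than bookkeeping; without it the reduction to a finite list of local checks is incomplete.
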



In this paper we show that the category $\catd$ of finite dimensional modules over the quantum linear Lie superalgebra $U_q(\slto)$ leads to a relative $\Gr$-spherical category and so gives rise to a modified TV-invariant.   Our approach is based on a generalization of the usual technique: we take a quotient by negligible morphisms corresponding to the \emph{modified trace}.  As we will discuss, the obtained invariants should have different properties than the usual TV-invariants.  

The category $\catd$ has the obstructions \eqref{I:ObPiv1}--\eqref{I:ObPiv4} listed above.  When $q$ is generic, the simple $U_q(\slto)$-modules are indexed by pairs $(n,\alpha)$ where $n\in \N$ and $\alpha\in \C$.   Even when $q$ is generic most of these simple modules have vanishing quantum dimensions.  Therefore, taking the quotient of $\catd$ by the negligible morphisms corresponding to the quantum trace would trivialize most of the category.  Alternatively, the ideal $\I$ generated by the four dimensional module $V(0,\alpha)$ has a non-zero modified trace and corresponding non-zero modified quantum dimensions.  When $q=e^{2 \pi i/\rt}$ is a root of unity some of these modified quantum dimensions become zero.  In particular, the modified quantum dimensions of modules on the boundary of the ``alcove'' vanish.    The idea behind this paper is to take the quotient of $\catd$ by these modules to obtain a relative $\Gr$-spherical category.  

This construction has a novel feature:   The alcove has an  infinite number of  
non-isomorphic 
simple modules $V(n,\tilde{\alpha})$ where $\tilde{\alpha}\in \C/\rt\Z$, $0\leq n\leq\rt'-2$ and $\rt'=\frac{2\rt}{3+(-1)^\rt}$.   Let $V(n,\alpha)$ be the simple module over  $U_q(\slto)$ when $q$ is generic.  By setting $q=e^{2 \pi i/\rt}$ the module $V(n,\alpha)$  specializes to $V(n,\alpha + \rt\Z)$ for $\alpha \in \C$.   Modified $6j$-symbols corresponding to these modules exist for both $q$ generic and $q=e^{2 \pi i/\rt}$.   An interesting problem is to see if the $6j$-symbols coming from generic $q$ will specialize to the $6j$-symbols constructed in this paper for  roots of unity of \textbf{any} order.  If this is true one could see if there exist an invariant defined for generic $q$ which when specialized to  $q=e^{2 \pi i/\rt}$, for any $\rt$ recovers the modified TV-invariants corresponding to the relative $\Gr$-spherical categories defined in this paper.   Results in this direction have been obtain for the usual Reshetikhin-Turaev quantum invariants, see \cite{Hab2008,HabLe2016,Le2003,Oht1995} and references within.

What is different in our context is that the modified invariants arising from the work of this paper require manifolds with additional structures, including the cohomology class discussed above.  This makes the definition of the invariants more technical but can also add new information.  For example, the modified RT-type invariants of \cite{CGP,BCGP} recover the multivariable Alexander polynomial and Reidemeister torsion, which allows the reproduction of the classification of lens spaces.  Also, the invariants of \cite{CGP,BCGP} give rise to TQFTs and mapping class group representations with the notable property that the action of a Dehn twist has infinite order.  This is in strong contrast with the usual quantum mapping class group representations where all Dehn twists have finite order. 
 We expect similar properties for the invariants coming from this paper.   
 Combining such properties with the ideas discussed in the previous paragraph could lead to appealing applications.

We should also mention that 3-manifold invariants arising from a quantization of $\slto$ have already been constructed by Ha in \cite{Ha}.  Ha's construction uses a ``unrolled'' version of quantum $\slto$.  He also uses a modified trace on a 
different ideal which consists of projective modules, most of which only exist when $q$ is a root of unity.   
 In comparison, in this paper the quotient of $\catd$ we take contains all the projective modules of $\catd$.  Also, for $0\leq n\leq\rt'-2$ and $\alpha\in \C$ the ideal Ha works with does not contain a module which is the specialization of the simple $U_q(\slto)$-module  $V(n,\alpha)$ discussed above.

\section{Categorical Preliminaries} \label{S:CatPrelim}

As mentioned above our main theorem will be that $\Uq$ gives rises to a  relative $\Gr$-spherical category.    With this in mind, in this section we will recall the general definition of such a category, for more details see \cite{GP2, GPT2}.    

\subsection{$\FK$-categories}
Let $\FK$ be a field.  
A \emph{tensor $\FK$-category} is a tensor category $\cat$ such that its hom-sets are left $\FK$-modules, the composition and tensor product of morphisms is $\FK$-bilinear, and the canonical $\FK$-algebra map $\FK \to \End_\cat(\unit), k \mapsto k \, \Id_\unit$ is an isomorphism (where $\unit$ is the unit object).
A tensor category is \emph{pivotal} if it has dual objects and duality morphisms 
$$\coev_{V} : \:\:
\unit \rightarrow V\otimes V^{*} , \quad \ev_{V}: \:\: V^*\otimes V\rightarrow
\unit , \quad \tcoev_V: \:\: \unit \rightarrow V^{*}\otimes V\quad {\rm {and}}
\quad \tev_V: \:\: V\otimes V^*\rightarrow \unit$$ which satisfy compatibility
conditions (see for example \cite{BW, GKP2}).  

 An object $V$ of $\cat$ is \emph{simple} if $\End_\cat(V)= \FK \Id_V$.
Let $V$ be an object in $\cat$ and let $\alpha:V\to W$ and $\beta:W\to V$ be morphisms.  The triple
$(V,\alpha,\beta)$ (or just the object $V$) is a \emph{retract} of $W$ if
$\beta\alpha=\Id_{V}$.  An object $W$ is a \emph{direct sum} of the finite
family $\{V_i\}_i$ of objects of $\cat$ if there exist retracts
$(V_i,\alpha_i,\beta_i)$ of $W$ with $\beta_i\alpha_j=0$ for $i\neq j$
and $\Id_W=\sum_i\alpha_i\beta_i$.
An object which is a direct sum of simple objects is called {\em
  semi-simple}.

\subsection{Colored ribbon graph invariants}
Let $\cat$ be a pivotal $\FK$-category.  
A ribbon graph is formed from several oriented framed edges colored by objects of $\cat$ and several coupons colored with morphisms of $\cat$. 
We say a $\cat$-colored ribbon graph in $\R^2$ (resp.\ $S^{2}=\R^2\cup\{\infty\}$) is called \emph{planar} (resp.\ \emph{spherical}).  
Let $\Ffun$ be the usual Reshetikhin-Turaev functor from the category of $\cat$-colored planar ribbon graphs to $\cat$ (see \cite{Tu}).    

Let
$T\subset S^2$ be a closed $\cat$-colored ribbon
graph.  Let $e$ be an edge of $T$ colored with a simple object $V$ of
$\cat$. Cutting $T$ at a point of $e$, we obtain a $\cat$-colored ribbon graph
$T_V $ in $\R\times [0,1]$ where
$\Ffun(T_V)\in\End(V)=\FK \Id_V$.   We call $T_V $ a \emph{cutting
  presentation} of $T$ and let $\ang{T_V} \, \in \FK$ denote the isotopy
invariant of $T_V$ defined from the equality $\Ffun(T_V )= \, \ang{ T_V}\,
\Id_V$.

Let $\A$ be a class of simple objects of $\cat$ and $\qd:\A\to\FK^\times$ be a
mapping such that $\qd(V)=\qd(V^*)$ and $\qd(V)=\qd(V')$ if $V$ is isomorphic
to $V'$.  We say  $(\A,\qd)$ is \emph{t-ambi pair} if for any closed $\cat$-colored trivalent ribbon graph $T$ with any two
cutting presentations $T_V$ and $T_{V'}$,  $V, V'\in\A$ the following equation holds:
\begin{equation*}
  \qd(V)\ang{T_V}=\qd(V')\ang{T_V'}.
\end{equation*}

\subsection{$\Gr$-graded and generically $\Gr$-semi-simple
  categories}\label{SS:GSpher}
Let $\Gr$ be a  group.    
A pivotal $\FK$-category is {\em $\Gr$-graded} if
  for each $g\in \Gr$ we have a non-empty full subcategory $\cat_g$ of
  $\cat$ such that
  \begin{enumerate}
  \item $\unit \in \cat_e$, (where $e$ is the identity element of $G$)
  \item  $\cat=\bigoplus_{g\in\Gr}\cat_g$, 
  \item  if $V\in\cat_g$,  then  $V^{*}\in\cat_{g^{-1}}$,
  \item  if $V\in\cat_g$, $V'\in\cat_{g'}$ then $V\otimes
    V'\in\cat_{gg'}$,
  \item  if $V\in\cat_g$, $V'\in\cat_{g'}$ and $\Hom_\cat(V,V')\neq 0$, then
    $g=g'$.  
  \end{enumerate}
For a subset $\X\subset\Gr$ we say:
\begin{enumerate}
\item $\X$ is \emph{symmetric} if $\X^{-1}=\X$,
\item $\X$ is \emph{small} in $\Gr$ if the group $\Gr$ can not be covered by a
  finite number of translated copies of $\X$, in other words, for any
  $ g_1,\ldots ,g_n\in \Gr$, we have $ \bigcup_{i=1}^n (g_i\X) \neq\Gr$.
\end{enumerate}

  \begin{enumerate}
  \item A $\FK$-category $\cat$ is \emph{semi-simple} if all its objects are semi-simple.
  \item A $\FK$-category $\cat$ is \emph{finitely semi-simple} if it is
    semi-simple and has finitely many isomorphism classes of simple
    objects.
  \item A $\Gr$-graded 
    category $\cat$ is a 
    \emph{generically finitely
    $\Gr$-semi-simple} category 
    if there exists a small
    symmetric subset $\X\subset \Gr$ such that for each
    $g\in\Gr\setminus\X$, $\cat_g$ is 
    finitely semi-simple. 
    By a
    \emph{generic simple} object we mean a  simple object of $\cat_g$ for some $g\in\Gr\setminus\X$.  
   \end{enumerate}

The notion of generically $\Gr$-semi-simple categories appears in
\cite{GPT2,GP3} through the following generalization of fusion
categories (in particular, fusion categories satisfy the following
definition when $\Gr$ is the trivial group, $\X=\emptyset$ and
$\qd=\bb=\qdim_\cat$ is the quantum dimension): 
\begin{definition}[Relative $\Gr$-spherical category]\label{D:G-spherical} 
Let $\cat$ be a generically finitely $\Gr$-semi-simple 
  pivotal 
 $\FK$-category with
  small
    symmetric subset  $\X\subset\Gr$ and let $\A$ be the class of generic
  simple objects of $\cat$. We say that $\cat$ is
  \emph{$(\X,\qd)$-relative $\Gr$-spherical} if
  \begin{enumerate}
  \item \label{ID:G-sph6} there exists a map $\qd:\A\rightarrow
    \FK^{\times}$ such that $(\A,\qd)$ is a t-ambi pair,
  \item \label{ID:G-sph7} there exists a map $\bb:\A\to \FK$ such that
    $\bb(V)=\bb(V^*)$, $\bb(V)=\bb(V')$ for any isomorphic objects $V, V'\in
    \A$ and for any $g_1,g_2,g_1g_2\in \Gr\setminus\X$ and $V\in \Gr_{g_1g_2}$
    we have
  \begin{equation*}\label{eq:bb}
    \bb(V)=\sum_{V_1\in irr(\cat_{g_1}),\, V_2\in irr(\cat_{g_2})}
    \bb({V_1})\bb({V_2})\dim_\FK(\Hom_\cat(V, V_1\otimes V_2))
  \end{equation*}
  where $irr(\cat_{g_i})$ denotes a representing set of isomorphism classes
  of simple  objects of $\cat_{g_i}$.
  \end{enumerate}
\end{definition}

  If $\cat$ is   a category with
   the above data, for brevity we say $\cat$ is a \emph{relative
    $\Gr$-spherical category}.
In \cite{GPT2}, to construct a 3-manifold invariant from a relative $\Gr$-spherical category $\cat$ the authors assume that $\cat$ has a technical requirement called \emph{basic data}.  The following lemma (proved in \cite{GP2}) says that in most situations $\X$ can be enlarged so that $\cat$ has basic data.   This lemma implies that we can assume the categories we considered in this paper have basic data.   

\begin{lemma}\label{L:basicdata}
  If no object of $\A$ is isomorphic to its dual, then $\cat$ contains a basic
  data.  In particular, basic data exists if $\X$ contains the set $\{g\in \Gr
  : g=g^{-1}\}$.
\end{lemma}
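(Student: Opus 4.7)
The plan is to construct basic data by using the hypothesis to pair generic simple objects with their duals in a consistent way. Recall from \cite{GPT2, GP2} that, informally, basic data on $\cat$ consists of a representing set $\I=\bigcup_{g\in\Gr\setminus\X}\I_g$ of isomorphism classes of generic simple objects together with a compatibility between the chosen representatives and the duality functor (namely, $\I$ is closed under $(-)^*$, with coherent identifications $V\cong V^{**}$). Everything below is a matter of exhibiting such an $\I$.

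First I would observe that the duality functor $(-)^{*}:\cat\to\cat$ restricts to a bijection $\cat_g\to\cat_{g^{-1}}$ on isomorphism classes of generic simples, which uses item (3) of the grading axioms together with the symmetry of $\X$. In particular, it induces an involution $\iota$ on the set of isomorphism classes of generic simple objects of $\cat$. The hypothesis that no object of $\A$ is isomorphic to its dual is precisely the statement that $\iota$ has no fixed points, so $\iota$ partitions the isomorphism classes into two-element orbits of the form $\{[V],[V^{*}]\}$.

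Next, I would invoke the axiom of choice to pick one representative from each orbit. Concretely, for every $g\in\Gr\setminus\X$ with $g\neq g^{-1}$, choose $\I_g$ arbitrarily among the simple objects of $\cat_g$, and then set $\I_{g^{-1}}=\{V^{*}:V\in\I_g\}$; for every $g\in\Gr\setminus\X$ with $g=g^{-1}$, use the absence of fixed points of $\iota$ on $\operatorname{irr}(\cat_g)$ to group these classes into pairs $\{[V],[V^{*}]\}$ and pick one representative from each pair together with its dual. The resulting set $\I$ is closed under $(-)^{*}$ by construction, and the canonical isomorphisms $V\cong V^{**}$ supply the required coherence; this is the sought basic data.

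Finally, for the ``In particular'' statement, suppose $\X\supseteq\{g\in\Gr:g=g^{-1}\}$. If some generic simple $V\in\cat_g$ (with $g\in\Gr\setminus\X$) were isomorphic to $V^{*}$, then $V^{*}\in\cat_{g^{-1}}$ and the existence of a nonzero morphism between them forces $g=g^{-1}$ by item (5) of the grading axioms; hence $g\in\X$, contradicting the genericity of $V$. Therefore no generic simple is self-dual, so the first part of the lemma applies and produces basic data. The main subtle point of the argument is unpacking the precise definition of basic data in \cite{GPT2} so as to see that closure under duality together with coherence of the $V\cong V^{**}$ isomorphisms is in fact sufficient; once this is granted, the rest of the argument is essentially bookkeeping with the axiom of choice.
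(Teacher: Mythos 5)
The paper does not actually prove this lemma; it is quoted from \cite{GP2}, so there is no in-paper argument to compare against. Your proposal follows the standard route of that reference and is essentially correct: the ``in particular'' reduction via grading axiom (5) is exactly right, and the construction of a representing set $\I$ closed under duality by choosing one class from each two-element orbit of the duality involution is the intended argument. The one place where you assert rather than verify is the sentence ``the canonical isomorphisms $V\cong V^{**}$ supply the required coherence.'' In \cite{GPT2} basic data is not just a duality-closed representing set: it also requires a choice of isomorphisms $w_i:V_{i^*}\to (V_i)^{*}$ satisfying a compatibility that links $w_i$ and $w_{i^*}$ through the pivotal structure. The actual content of the lemma is that this compatibility is only a genuine constraint when $i=i^*$ (a self-dual class), where it may fail; on a fixed-point-free orbit $\{i,i^*\}$ one chooses $w_i$ arbitrarily and then \emph{defines} $w_{i^*}$ by the compatibility formula, so nothing remains to be checked. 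You should say this explicitly --- as written, your proof defers precisely the step that makes the hypothesis ``no object of $\A$ is isomorphic to its dual'' necessary rather than decorative. With that one sentence added, the argument is complete.
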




\subsection{Traces on ideals in pivotal categories}\label{SS:trace}
In this subsection we recall some facts about right traces in a pivotal $\FK$-category $\cat$, for more details see \cite{GPV, GKP2}. 
In this paper we will use right traces to show that a t-ambi pair exists.  
  
By a \emph{right ideal} of  $\cat$ we mean a full subcategory $\ideal$ of $\cat$ such that:  
\begin{enumerate}
\item  If $V\in \ideal$ and $W\in \cat$, then $V\otimes W \in \ideal$.
\item If $V\in \ideal$ and if $W\in \cat$ is a retract of $V$,
  then $W\in \ideal$.  
\end{enumerate}
 A \emph{right trace} on a right ideal  $\ideal$ is a family of linear functions
$$\{\qt_V:\End_\cat(V)\rightarrow \FK \}_{V\in \ideal}$$
such that:
\begin{enumerate}
\item  If $U,V\in \ideal$ then for any morphisms $f:V\rightarrow U $ and $g:U\rightarrow V$  in $\cat$ we have 
\begin{equation*}
\qt_V(g f)=\qt_U(f  g).
\end{equation*} 
\item \label{I:Prop2Trace} If $U\in \ideal$ and $W\in \cat$ then for any $f\in \End_\cat(U\otimes W)$ we have
\begin{equation*}
\qt_{U\otimes W}\bp{f}=\qt_U \bp{(\Id_U \otimes \tev_W)(f\otimes \Id_{W^*})(\Id_U \otimes \coev_W)}
\end{equation*}
\end{enumerate}


Next we recall how to construct a right trace.  Given an object $V$ of $\cat$ we define the ideal generated by $V$ as
 $$ \ideal_{V}=\{W \in  \cat\, | \, W \text{ is a retract of } V\otimes X  \text{ for some object } X \}.$$ 
In \cite{GPV} the notion of a  right ambidextrous simple object is developed (see Sections 4.2 and 4.4 of \cite{GPV}).  Theorem 10 of \cite{GPV} implies:
\begin{theorem}[\cite{GPV}]\label{T:rigthambiTrace}
If $V$ is a right ambidextrous simple object then there exists a non-zero right trace $\{\qt_V\}$ on the ideal $\ideal_{V}$; this trace is unique up to multiplication by a non-zero scalar.
\end{theorem}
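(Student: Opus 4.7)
The plan is to build the right trace object-by-object on $\ideal_V$ using the partial trace construction, and then exploit simplicity of $V$ together with the right ambidextrous condition to see this is well-defined and essentially unique.

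First, I would fix the generator $V$ and observe that, by definition of $\ideal_V$, every $W \in \ideal_V$ comes equipped with data $(X, \alpha, \beta)$ such that $W$ is a retract of $V \otimes X$ via $\alpha : W \to V\otimes X$ and $\beta : V\otimes X \to W$ with $\beta\alpha = \Id_W$. Given $f \in \End_\cat(W)$, lift it to $\widetilde f := \alpha f \beta \in \End_\cat(V\otimes X)$, and then apply the right partial trace
\[
\ptr_V(\widetilde f) \;:=\; (\Id_V \otimes \tev_X)(\widetilde f \otimes \Id_{X^*})(\Id_V \otimes \coev_X) \;\in\; \End_\cat(V).
\]
Since $V$ is simple, $\End_\cat(V)= \FK \Id_V$, so there is a unique scalar $\qt_W(f) \in \FK$ with $\ptr_V(\widetilde f) = \qt_W(f)\,\Id_V$. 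This is the candidate definition.

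Next I would check the required properties. \emph{Well-definedness} — independence of the chosen retract $(X,\alpha,\beta)$ — is exactly the content of the right ambidextrous hypothesis on $V$: comparing two retracts $(X,\alpha,\beta)$ and $(X',\alpha',\beta')$ produces two endomorphisms of $V \otimes X$ and $V \otimes X'$ related by a zig-zag of morphisms, and the ambidextrous axiom (as formulated in Sections 4.2 and 4.4 of \cite{GPV}) asserts precisely that their right partial traces back to $\End_\cat(V)$ agree. \emph{Cyclicity} $\qt_V(gf)=\qt_U(fg)$ for $f:V\to U$, $g:U\to V$ with $U,V\in \ideal_V$ follows by choosing compatible retracts into a common $V\otimes X$ and using standard pivotal graphical manipulations (sliding morphisms around a cup/cap) reducing to the cyclicity of composition inside $\End_\cat(V)$. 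The \emph{partial-trace axiom} (\ref{I:Prop2Trace}) is built into the very definition: if $U\in \ideal_V$ with retract data into $V\otimes X$, then an endomorphism of $U\otimes W$ lifts to $V \otimes (X\otimes W)$, and the two ways of contracting out $W$ and then $X$, versus $X\otimes W$ at once, coincide by the coherence axioms of a pivotal category.

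Finally, for \emph{uniqueness up to scalar}, let $\qt'$ be any right trace on $\ideal_V$. Its value on $\Id_V$ is some scalar $\lambda := \qt'_V(\Id_V) \in \FK$; I would show that $\qt'$ is forced to equal $\lambda \cdot \qt$ on every $W \in \ideal_V$. Given $f \in \End_\cat(W)$, write $f = \beta(\alpha f \beta)\alpha$ with $(X,\alpha,\beta)$ a retract of $W$ into $V\otimes X$; cyclicity reduces $\qt'_W(f)$ to $\qt'_{V\otimes X}(\alpha f \beta)$, and the partial-trace axiom (\ref{I:Prop2Trace}) reduces this further to a value of $\qt'_V$ on $\ptr_V(\alpha f \beta) = \qt_W(f) \Id_V$. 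Thus $\qt'_W(f)=\lambda\, \qt_W(f)$. \emph{Non-triviality}: taking $\lambda=1$ gives $\qt_V(\Id_V)=1 \neq 0$, so the trace we constructed is not identically zero.

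The main obstacle is verifying well-definedness: one must show that the scalar $\qt_W(f)$ really does not depend on the chosen retract $(X,\alpha,\beta)$, and it is precisely here that the definition of a \emph{right ambidextrous} simple object is tailored to make this work, as established in \cite{GPV}. The other axioms are essentially formal consequences of the pivotal graphical calculus.
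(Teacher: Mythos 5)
This theorem is quoted from \cite{GPV} (Theorem 10 there); the paper supplies no proof of its own, so your proposal can only be measured against the reference. Your construction is indeed the one used there: for $W\in\ideal_V$ define $\qt_W(f)$ by lifting $f$ through a retract of $V\otimes X$, taking the partial trace over $X$ to land in $\End_\cat(V)=\FK\,\Id_V$, and reading off the scalar; cyclicity, the partial-trace axiom, non-vanishing, and uniqueness up to the normalization $\qt_V(\Id_V)$ then come out essentially as you describe.

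The one place where your argument is not yet a proof is exactly the step you flag as the main obstacle. You dispose of well-definedness by asserting that the right-ambidextrous condition \emph{asserts precisely} that the partial traces computed from two different retracts $(X,\alpha,\beta)$ and $(X',\alpha',\beta')$ agree. That is not the definition: in \cite{GPV} (and as recalled in this paper via Lemma \ref{L:Uniquejj} and Theorem \ref{T:RightAmbij=j'}) right ambidexterity of a simple $V$ is a condition on $V\otimes V^*$ alone --- equivalently the coincidence $j=j'$ of the distinguished indecomposable summands singled out by $\coev_V$ and $\tev_V$, or a symmetry of a single bracket on closed graphs. Deriving from this the independence of $\qt_W(f)$ from the choice of retract, for \emph{arbitrary} $X$, is the substantive content of the cited theorem and requires a genuine graphical argument (one closes up the two computations into a single $\cat$-colored graph with two cutting presentations along $V$-colored edges and invokes the ambi property of the bracket). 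As written, your proof takes its hardest step as a definition. A smaller point of the same kind: for cyclicity with $f:U\to U'$, $g:U'\to U$, where $U$ and $U'$ are retracts of $V\otimes X$ and $V\otimes X'$ with $X\neq X'$, there is in general no ``common'' $V\otimes X$ to pass through; this too is handled in \cite{GPV} by a graphical comparison rather than by choosing compatible retracts.
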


Now we will recall a way to show a simple object is right ambidextrous.  
Let   $V$ be a simple object in $\cat$.   We fix a direct sum decomposition of $V\otimes V^*$ into indecomposable objects
$W_i$ indexed by a set $I$: 
\begin{equation}\label{E:Decomp}
V\otimes V^* = \bigoplus_{k\in I}W_k.
\end{equation}
Let  $i_{k}: W_{k}\to V\otimes V^{*}$ and $p_{k}: V \otimes V^{*} \to
W_{k}$ be the 
morphisms corresponding to this decomposition.  In particular,
$\sum_{k\in I}  i_{k}p_{k}=\Id_{V \otimes V^{*}}$ and $p_{k}i_{k} = \Id_{W_{k}}$, for all $k \in I$.
From Lemma 3.1.1 of \cite{GKP2} we have the following lemma:
\begin{lemma}[\cite{GKP2}]\label{L:Uniquejj}
There exists unique  $j,j'\in I$ so that 
  \begin{enumerate}
  \item   $\Hom_{\cat}(\unit , W_{j}) $ is non-zero and is spanned by $p_{j}
    \coev_{V}$  and 
    \item 
    $\Hom_{\cat}(W_{j'} , \unit)$ is non-zero and is spanned by
    $\tev_{V} i_{j'}$.
  \end{enumerate}
  \end{lemma}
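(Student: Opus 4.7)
The plan is to reduce both statements to the fact that $\End_{\cat}(V) = \FK\Id_V$ via the standard pivotal duality isomorphisms, and then use the direct sum decomposition of $V \otimes V^*$ to show that exactly one summand carries all of the relevant morphisms.

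First, I would use the pivotal structure to identify
\[
\Hom_{\cat}(\unit, V \otimes V^*) \;\cong\; \End_{\cat}(V),
\]
where the isomorphism sends $\coev_V$ to $\Id_V$. Since $V$ is simple, the right-hand side is the one-dimensional space $\FK\Id_V$, so $\Hom_{\cat}(\unit, V \otimes V^*)$ is one-dimensional and spanned by $\coev_V$. Next I would apply the decomposition \eqref{E:Decomp}, which yields
\[
\Hom_{\cat}(\unit, V \otimes V^*) \;=\; \bigoplus_{k \in I} \Hom_{\cat}(\unit, W_k),
\]
via $f \mapsto (p_k f)_k$. A direct sum of $\FK$-vector spaces is one-dimensional iff exactly one summand is one-dimensional and the rest vanish; this gives the existence and uniqueness of an index $j \in I$ with $\Hom_{\cat}(\unit, W_j) \neq 0$.

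To identify the spanning element, I would observe that $\Id_{V \otimes V^*} = \sum_k i_k p_k$ implies $\coev_V = \sum_k i_k (p_k \coev_V)$, and since $\Hom_{\cat}(\unit, W_k) = 0$ for $k \neq j$, we must have $p_k \coev_V = 0$ for all $k \neq j$; hence $\coev_V = i_j (p_j \coev_V)$. Because $\coev_V \neq 0$, the morphism $p_j \coev_V$ is a nonzero element of the one-dimensional space $\Hom_{\cat}(\unit, W_j)$, so it spans it. The argument for $j'$ is entirely parallel: one uses the isomorphism $\Hom_{\cat}(V \otimes V^*, \unit) \cong \End_{\cat}(V)$ (sending $\tev_V$ to $\Id_V$), decomposes this hom space as $\bigoplus_k \Hom_{\cat}(W_k, \unit)$, isolates the unique summand that is nonzero, and uses $\Id_{V \otimes V^*} = \sum_k i_k p_k$ together with $\tev_V \neq 0$ to show that $\tev_V i_{j'}$ is a nonzero (hence spanning) element.

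The only delicate point is justifying that $\coev_V$ and $\tev_V$ are indeed nonzero and that the identification $\Hom(\unit, V \otimes V^*) \cong \End(V)$ actually sends $\coev_V$ to $\Id_V$; this is just a direct check using the pivotal duality axioms (the snake identities), so I do not anticipate a real obstacle. Everything else is linear algebra applied to the $\FK$-linear decomposition and is formal once simplicity of $V$ has been invoked.
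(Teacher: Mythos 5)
Your proof is correct. The paper itself does not prove this lemma --- it simply cites Lemma 3.1.1 of \cite{GKP2} --- and your argument (identifying $\Hom_{\cat}(\unit, V\otimes V^*)$ and $\Hom_{\cat}(V\otimes V^*,\unit)$ with $\End_{\cat}(V)=\FK\Id_V$ via the snake identities, splitting these one-dimensional spaces along the decomposition $\sum_k i_k p_k=\Id$ with $p_k i_l=0$ for $k\neq l$, and using $\coev_V=i_j(p_j\coev_V)\neq 0$ to see that $p_j\coev_V$ spans) is exactly the standard argument one finds there.
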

Theorem 3.1.3. of \cite{GKP2} gives the following theorem.  
\begin{theorem}[\cite{GKP2}] \label{T:RightAmbij=j'}
The simple object $V$ is right ambidextrous if and only if $j=j'$.
\end{theorem}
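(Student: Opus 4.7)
My plan is to prove the biconditional by analyzing when a candidate right trace on $\ideal_V$ can be consistent, using the decomposition $V\otimes V^* = \bigoplus_{k\in I} W_k$ and the distinguished roles of $W_j$ and $W_{j'}$ singled out by Lemma \ref{L:Uniquejj}. The overall strategy is: construct the trace explicitly when $j=j'$, and derive a contradiction from the existence of a nontrivial right trace when $j \neq j'$.

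\emph{Sufficiency.} Assuming $j=j'$, I would construct an explicit right trace $\qt$ on $\ideal_V$. Set $\qt_V(\Id_V)$ to be a nonzero normalization constant, and extend to $U = V \otimes W \in \ideal_V$ using the partial trace formula of axiom (\ref{I:Prop2Trace}). The partial trace property then holds by construction, so the work is in verifying cyclicity $\qt_A(gf) = \qt_B(fg)$ for $f: A \to B$ and $g: B \to A$ in $\ideal_V$. Writing $A$ and $B$ as retracts of $V \otimes X_A$ and $V \otimes X_B$ respectively, and unwinding the partial trace formula, the identity reduces to an equation in $\End_\cat(V \otimes V^*)$. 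By standard pivotal-category calculus only the components of $V \otimes V^*$ admitting nonzero morphisms from or to $\unit$ can contribute to such identities, namely the summands $W_j$ and $W_{j'}$; when $j=j'$ these coincide and the resulting contribution lies in a one-dimensional space, making the identity automatic.

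\emph{Necessity.} Conversely, suppose $V$ is right ambidextrous with trace $\qt$, so $\qt_V(\Id_V) \neq 0$. I would argue contrapositively: if $j \neq j'$, then the factorizations $\coev_V = i_j (p_j \coev_V)$ and $\tev_V = (\tev_V i_{j'}) p_{j'}$, together with the orthogonality $p_{j'} i_j = 0$, force certain loops through $V \otimes V^*$ to vanish. Concretely, applying cyclicity to a well-chosen endomorphism of $V \otimes V^*$ built from $\coev_V$, $\tev_V$, and the projections $i_k p_k$, yields two expressions for the same value of $\qt$: one identically zero because of $p_{j'} i_j = 0$, the other reducing via the partial trace formula to a nonzero scalar multiple of $\qt_V(\Id_V)$. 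This contradicts the nontriviality of $\qt$, forcing $j=j'$.

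The hardest part will be the cyclicity verification in the sufficiency direction. Since the $W_k$ are only indecomposable, their endomorphism algebras may contain nilpotents and the decomposition into $W_k$'s is not canonical in the way a decomposition into simples would be. I would address this by a Fitting-style argument: morphisms between distinct indecomposable summands lie in the radical of $\End_\cat(V \otimes V^*)$, and any symmetric linear functional compatible with the partial trace formula must vanish on this radical, so only the diagonal block contributions matter and the argument above suffices.
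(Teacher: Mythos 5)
The paper itself offers no proof of this statement---it is imported verbatim as Theorem~3.1.3 of \cite{GKP2}---so the comparison below is against the definition of right ambidexterity and the cited machinery rather than against an internal argument.

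The genuine gap in your proposal is that you never engage with the definition of \emph{right ambidextrous}: throughout, you silently replace it by the property ``there exists a nonzero right trace on $\ideal_V$.'' These are indeed equivalent, but the equivalence is itself a theorem (the full statement of Theorem~10 of \cite{GPV}), and the paper records only one implication of it (Theorem \ref{T:rigthambiTrace}: ambidextrous $\Rightarrow$ a trace exists). Your necessity direction may legitimately invoke that implication to obtain $\qt$ from ambidexterity, and the computation you sketch---the idempotent $i_jp_j$ fixes $\coev_V$ while $\tev_V\, i_j=0$ when $j\neq j'$, so two evaluations of the trace on a loop through $W_j$ disagree---is the standard one and is sound. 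But your sufficiency direction builds a trace and must then \emph{conclude} ambidexterity, which needs the unproved converse; and the construction itself is where all the work lives. You must verify that the extension to retracts is independent of the chosen retract data and that cyclicity holds for arbitrary $f:A\to B$, $g:B\to A$ in the ideal; this is precisely the content of \cite{GPV}, and the assertion that ``only $W_j$ and $W_{j'}$ can contribute\dots making the identity automatic'' restates the goal rather than proving it. In particular, two linear forms that factor through a one-dimensional space need not coincide: one must also check that they agree at $\Id_{V\otimes V^*}$ and that both factor through the \emph{same} quotient $f\mapsto p_jfi_j$ modulo the radical of the local ring $\End_\cat(W_j)$---the Fitting-type remark you defer to the end is the heart of the matter, not a technical afterthought. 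The economical route, and the one taken in \cite{GKP2}, avoids rebuilding the trace on the whole ideal: since $V$ is simple, $\Hom_\cat(\unit,V\otimes V^*)=\FK\coev_V$ and $\Hom_\cat(V\otimes V^*,\unit)=\FK\tev_V$, so one gets two linear forms $\lambda,\mu$ on $\End_\cat(V\otimes V^*)$ via $f\circ\coev_V=\lambda(f)\coev_V$ and $\tev_V\circ f=\mu(f)\tev_V$; ambidexterity is equivalent to $\lambda=\mu$, and Lemma \ref{L:Uniquejj} localizes $\lambda$ in $W_j$ and $\mu$ in $W_{j'}$, from which the criterion $j=j'$ falls out directly.
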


Finally, let us explain how to produce a t-ambi pair from a right trace.   Let $\qt$ be a right trace on a right ideal $\ideal$ of $\cat$.  Let $\qd$ be the  modified dimension associated with $\qt$ defined by $\qd(V)=\qt_V(\Id_V)$ for $V\in \ideal$.  Set
 $$
  \B=\{V\in\ideal\cap\ideal^{*}\,|\, V\text{ is simple and
  }\qd(V)=\qd(V^{*})\}
  $$
  where $\ideal^*=\{V\in \cat \; | \; V^*\in \ideal\}$. 
  The following theorem is Corollary 7 in \cite{GPV}.
\begin{theorem}[\cite{GPV}]\label{T:Bt-ambi}
The pair $(\B,\qd)$ is a t-ambi pair.
\end{theorem}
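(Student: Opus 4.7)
The plan is to translate the t-ambi equation into a statement about the right trace $\qt$, and then exploit the cyclicity and partial right trace axioms of $\qt$, together with the symmetry hypotheses defining $\B$, to identify the two sides.

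First, I would make the basic reduction: for a simple $V \in \B$ and a cutting presentation $T_V$ of a closed trivalent $\cat$-colored ribbon graph $T$, the identity $\Ffun(T_V) = \langle T_V \rangle \Id_V$ gives
$$\qt_V(\Ffun(T_V)) = \langle T_V \rangle\, \qd(V).$$
Thus the desired equation $\qd(V)\langle T_V\rangle = \qd(V')\langle T_{V'}\rangle$ is equivalent to
$$\qt_V(\Ffun(T_V)) = \qt_{V'}(\Ffun(T_{V'})).$$

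Second, I would cut $T$ simultaneously at both distinguished edges, producing a morphism $F$ in $\cat$ whose source and target are tensor products built from $V$ and $V'$ (or from their duals, depending on the orientations of the cut edges). Closing the $V'$-strand of $F$ inside the ribbon graph recovers $\Ffun(T_V)$; applying the partial right trace axiom of $\qt$ (property (\ref{I:Prop2Trace})) to the $V'$ tensor factor, together with the cyclicity axiom to permute tensor factors when necessary, yields
$$\qt_{V\otimes V'}(F) = \qt_V(\Ffun(T_V)).$$

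The third step, which is the main obstacle, is the symmetric identification $\qt_{V\otimes V'}(F) = \qt_{V'}(\Ffun(T_{V'}))$. The partial trace axiom of a right trace handles only the \emph{right} factor; to close the $V$-strand one would instead need a partial \emph{left} trace. The definition of $\B$ is engineered precisely to bypass this asymmetry: since $V \in \ideal \cap \ideal^*$ the dual $V^*$ also lies in $\ideal$, so the required left partial trace on $V$ can be reinterpreted as a right partial trace on $V^*$ after threading duality morphisms through $F$. The hypothesis $\qd(V) = \qd(V^*)$, combined with the uniqueness up to scalar of the right trace on $\ideal$ provided by Theorem \ref{T:rigthambiTrace}, forces the two procedures to agree, so that indeed $\qt_{V\otimes V'}(F) = \qt_{V'}(\Ffun(T_{V'}))$.

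Chaining the equalities produces $\qt_V(\Ffun(T_V)) = \qt_{V'}(\Ffun(T_{V'}))$ and hence the t-ambi property. The delicate work lies entirely in the third step: the careful tracking of the ribbon twists and duality morphisms introduced when swapping $V$ for $V^*$ inside $F$, and the verification that the resulting value is independent of the auxiliary choices made when rearranging the tensor factors. Once these book-keeping issues are settled, the theorem follows.
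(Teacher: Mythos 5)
First, a point of reference: the paper does not prove this statement at all --- it is imported as Corollary~7 of \cite{GPV} --- so the comparison is with the argument in that source. Your overall architecture matches it: reduce the t-ambi identity to $\qt_V(\Ffun(T_V))=\qt_{V'}(\Ffun(T_{V'}))$, cut $T$ at both distinguished edges to obtain $F\in\End_\cat(V\otimes V')$ with $\Ffun(T_V)=\operatorname{ptr}_R^{V'}(F)$ and $\Ffun(T_{V'})=\operatorname{ptr}_L^{V}(F)$, absorb the right closure via axiom~(\ref{I:Prop2Trace}), and then fight the left/right asymmetry using the two conditions defining $\B$. You also correctly identify that all the content sits in the third step.

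The gap is in how you discharge that third step. Invoking ``the uniqueness up to scalar of the right trace on $\ideal$ provided by Theorem~\ref{T:rigthambiTrace}'' does not do the job: after rewriting the left closure of the $V$-strand as a right partial trace over $V^*$, what you must compare is $\qt_{V\otimes V'}(F)$ with $\qt_{V'\otimes V^*}(\tilde F)$, i.e.\ the \emph{same} right trace evaluated on two different objects at two morphisms related by bending a strand through the pivotal structure. Uniqueness of right traces compares two traces on one ideal; it says nothing about this self-comparison, and the functional $f\mapsto\qt_{V'}(\operatorname{ptr}_L^V(f))$ is not a priori a right trace on a right ideal, so there is no second trace to which uniqueness could be applied. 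The mechanism in \cite{GPV} is different: one verifies directly that the duality-transported family $\qt^l_W(f):=\qt_{W^*}(f^*)$ is a \emph{left} trace on the left ideal $\ideal^*$ (its left partial-trace axiom is what legitimately closes the $V$-strand), and the hypotheses $V',\,V\in\ideal\cap\ideal^*$ and $\qd(W)=\qd(W^*)$ enter only to identify $\qt^l$ with $\qt$ on the one-dimensional algebras $\End_\cat(V)$ and $\End_\cat(V')$, where ``agree up to the scalar $\qd(W^*)/\qd(W)=1$'' is a statement about numbers rather than about traces on ideals. Without establishing the left-trace property (or an equivalent graphical identity for the rotation of $F$), your chain of equalities does not close.
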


\section{Quantum $\slto$ at roots of unity}

\subsection{Notation}
Fix a positive integer $\rt\geq 3$ and let $q=e^\frac{2\pi\sqrt{-1}}{\rt}$ be a
$\rt^{th}$-root of unity.  Set
$$\rt'= 
\left\{\begin{array}{cc}
 \rt & \text{ if $\rt$ is odd} \\
  \rt/2  & \text{ if $\rt$ is even} \end{array}\right. . $$
 We use two quotients of the complex numbers: $\C/\Z$ and $\C/\rt\Z$.  We will use greek letters to denote elements of $\C$.  We will denote elements in $\C/\Z$ and $\C/\rt\Z$ with bars and tildes, respectively.  
 In this paper, both $\C/\Z$ and $\C/\rt\Z$ are abelian groups induced from the addition in $\C$.

   For $\alpha\in \C$ let $\tilde{\alpha}$ be the element of $ \C/\rt\Z$ such that $\alpha$ is in the equivalence class of  $\tilde{\alpha}$.  In other words, $\alpha$ maps to $\tilde{\alpha}$ under the obvious map $\C\to \C/\rt\Z$.  Similarly, for $\alpha\in \C$ or $\tilde{\alpha}\in \C/\rt\Z$ let $\bar\alpha\in\C/\Z$ such that $\alpha$ or $\tilde{\alpha}$ is mapped to $ \bar{\alpha}$ under the map $\C\to \C/\Z$ or $\C/\rt\Z\to \C/\Z$, respectively.  
For $x$ in $ \C$ or $\C/\rt\Z$ set $\{x\}=q^x-q^{-x}$ and  $ [x]=\frac{\{x\}}{\{1\}}$. 

\subsection{Superspaces}
A \emph{superspace} is a $\Z_{2}$-graded vector space $V=V_{\p 0}\oplus V_{\p 1}$ over $\C$.  We denote the parity of a homogeneous element $x\in V$ by $\p x\in \Z_{2}$.  We say $x$ is even (odd) if $x\in V_{\p 0}$ (resp. $x\in V_{\p 1}$). 
If $V$ and $W$ are $\Z_{2}$-graded vector spaces then the space of linear maps $\Hom_\C(V,W)$ has a natural $\Z_{2}$-grading given by $f \in \Hom_C(V,W)_{\p j}$ if $f(V_{\p i}) \subset W_{\p i+\p j}$ for $\p i, \p j \in \Z_2$.
 Throughout, all modules of over a $\Z_{2}$-graded ring will be $\Z_{2}$-graded modules.

 \subsection{The superalgebra $\Uq$}\label{SS:UqDef} Let $A=(a_{ij})$ be the $2\times 2$ matrix given by $a_{11}=2$,
$a_{12}=a_{21}=-1$ and $a_{22}=0$. 
  Let $\Uq$ be the 
$\C$-superalgebra generated by the elements $K_{i}, K_i^{-1},E_{i}$ and $ F_{i}, $
  $i = 1,2$, subject to the relations: 
  \begin{align*}
 K_i^{\pm1} K_j &=  K_j K_i^{\pm1} , &  K_i^{-1} K_j^{\pm1} &=  K_j^{\pm1} K_i^{-1} , & 
  K_iF_jK_i^{-1}&=q^{-a_{ij}}F_j,
\end{align*}
\begin{align*}
K_iE_jK_i^{-1}&=q^{a_{ij}}E_j, & [E_{i},F_{j}]=&\delta_{i,j}\frac{K_i-K_i^{-1}}{q-q^{-1}}, 
 \end{align*}
\begin{align}
\label{E:QserreA2}
  E_{1}^{2}E_{2}-(q+q^{-1})E_{1}E_{2}E_{1}+E_{2}E_{1}^{2}&=0, &
  F_{1}^{2}F_{2}-(q+q^{-1})F_{1}F_{2}F_{1}+F_{2}F_{1}^{2}&=0
\end{align}
where $[,]$ is the super-commutator given by $[x,y]=xy-(-1)^{\p x \p
 y}yx$.  All generators are even except for $E_{2}$ and $F_{2}$ which
 are odd. 
The algebra $\Uq$ is a Hopf algebra where the coproduct, counit and
antipode are defined by
\begin{align*}
\label{}
     \Delta ({E_{i}})= & E_{i}\otimes 1+ K_i^{-1} \otimes E_{i}, &
   \epsilon(E_{i})= & 0 & S(E_{i})=&-K_iE_{i}\\
%
     \Delta ({F_{i}}) = & F_{i}\otimes K_i+ 1 \otimes F_{i}, &
   \epsilon(F_{i})= &0 & S(F_{i})=&-F_{i} K_i^{-1}\\
%
      \Delta(K_i)=&K_i\otimes K_i
  &\varepsilon(K_i)=&1,
  & S(K_i)=&K_i^{-1},
    \\
  \Delta(K_i^{-1})=&K_i^{-1}\otimes K_i^{-1}
  &\varepsilon(K_i^{-1})=&1,
  & S(K_i^{-1})=&K_i.
 \end{align*}

\subsection{ Representations of $\Uq$}
 A $\Uq$-module $V$ is a \emph{weight module} if $V$ is finite dimensional and both $K_1$ and $K_2$ act diagonally on $V$. 
  Let $\catd$ be the tensor category of finite dimensional $\Z_2$-graded  weight $\Uq$-modules, whose unit object is the trivial module $\unit=\C$.  It is easy to see that this category is a $\C$-category.



A direct calculation shows that $S^2(x)=K_2^{-2}xK_2^{2}$ for all $x\in \Uq$.
  Thus, the square of the antipode is equal the conjugation of a
group-like element and so $\catd$ is a pivotal category (see \cite[Proposition 2.9]{Bi}).  In particular, for any
object $V$ in $\catd$, the dual object and the duality morphisms are defined as
follows: $V^* =\Hom_\C(V,\C)$ and
\begin{align}\label{E:DualityForCat}
\coev_V :\, & \C \rightarrow V\otimes V^{*} \text{ is given by } 1 \mapsto
  \sum v_j\otimes v_j^*,\notag
  \\
 \ev_{V}:\, & V^*\otimes V\rightarrow \C \text{ is given by } f\otimes w
  \mapsto f(w),\notag
  \\
\tcoev_V:\, & \C \rightarrow V^*\otimes V \text{ is given by } 1 \mapsto \sum
  (-1)^{\p{v_j}} v_j^*\otimes K_{2}^{2}v_j,
  \\
\tev_{V}:\, & V\otimes V^{*}\rightarrow \C \text{ is given by } v\otimes f
  \mapsto (-1)^{\p f\p{v}} f(K_{2}^{-2}v),\notag
\end{align}
where $\{v_j\}$ is a basis of $V$ and $\{v_j^*\}$ is the dual basis
of $V^*$.  These morphisms satisfy the compatibility conditions of a pivotal category.  

The simple $\Uq$-modules have been studied in \cite {AAB}.  Here we will consider what they call the \emph{typical type A representations}:  let $\omega\in \{\pm1\}$, $0\leq n\leq \rt'-1$ and $\tilde{\alpha}\in \C/\rt\Z$ then there exists a highest weight module $V(\omega, n, \tilde{\alpha})$ with highest weight vector $v$ such that $$E_i v=0, \;\; K_1v= \omega q^n v\;\; \text{  and } \;\;K_2v= q^{\tilde{\alpha}}v.$$
In \cite{AAB} it is shown that under certain conditions $V(\omega, n, \tilde{\alpha})$ is a simple module of dimension $4(n+1)$.  
Let us now give these conditions.  
To simplify notation if $\omega=1$ we set $V(n, \tilde{\alpha})=V(1, n, \tilde{\alpha})$.  
\begin{proposition}[\cite{AAB}, page 873]
If $[\tilde{\alpha}]\cdot [\tilde{\alpha}+n+1]\neq0$  then $V(n,\tilde{\alpha})$ is simple.  
\end{proposition}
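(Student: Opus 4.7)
My plan is to prove $V(n,\tilde\alpha)$ is simple by classifying its \emph{singular vectors}, i.e.\ weight vectors annihilated by both $E_1$ and $E_2$, and showing that under the hypothesis they form exactly $\C v$. Any nonzero submodule $W \subseteq V(n,\tilde\alpha)$ contains a weight vector (as $K_1,K_2$ act diagonally); since $\dim V(n,\tilde\alpha) = 4(n+1) < \infty$, iteratively applying $E_1, E_2$ to such a vector eventually yields a nonzero element of $W$ killed by both, i.e.\ a singular vector. If every singular vector is proportional to $v$, then $v\in W$, hence $W = V(n,\tilde\alpha)$ because $v$ generates the module cyclically.

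To list the singular vectors I would use the Kac-module structure: as a module over the even subalgebra $\Uq(\sll_2)\otimes\Uq(\mathfrak{gl}_1)$, $V(n,\tilde\alpha)$ decomposes as $V_n\otimes\Lambda^\bullet(\mathfrak{n}^-_{\bar 1})$ into four $\Uq(\sll_2)$-blocks, where $\mathfrak{n}^-_{\bar 1}=\mathrm{span}(F_2, F_{12})$ with $F_{12}:=F_1F_2 - qF_2F_1$ (this span is an $\sll_2$-doublet as one checks from the quantum Serre relation \eqref{E:QserreA2}). Since $[E_1, F_2] = 0$, each block has a unique $E_1$-top vector, yielding four candidate singular vectors: $v$, $F_2 v$, a combination $u = [n+1]F_{12}v - q^{-n}F_1F_2v$ in the $K_1$-weight $q^{n-1}$ subspace (the combination forced by $E_1(F_{12}v) = q^{-n}F_2v$ and $E_1(F_1F_2v) = [n+1]F_2v$), and a bottom-corner vector $u'$ of the form $F_2u$.

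It remains to apply $E_2$ to the three candidates other than $v$. Using $[E_2, F_1]=0$, the super-commutator $[E_2, F_2] = (K_2 - K_2^{-1})/(q-q^{-1})$, and $K_2 F_1 K_2^{-1} = qF_1$, a direct computation gives
\[
E_2(F_2 v) = [\tilde\alpha]\, v, \qquad E_2(u) = -q\,[\tilde\alpha + n + 1]\, F_1 v,
\]
and an analogous calculation produces a nonzero multiple of $[\tilde\alpha]\cdot[\tilde\alpha + n + 1]\, F_1v$ for $E_2(u')$. Under the hypothesis $[\tilde\alpha]\cdot[\tilde\alpha+n+1]\neq 0$, all three images are nonzero, so none of $F_2 v$, $u$, $u'$ is a singular vector. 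Therefore $\C v$ is the complete set of singular vectors, and $V(n,\tilde\alpha)$ is simple.

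The main obstacle is computing $E_2(u)$: one must expand $u$ and push $E_2$ through $F_1F_2v$ and $F_2F_1v$ using the super-commutator, producing terms in $[\tilde\alpha]$ and $[\tilde\alpha+1]$, and then observe the telescoping simplification $[\tilde\alpha]-q[\tilde\alpha+1]=-q^{\tilde\alpha+1}$ together with the final recombination into the single factor $[\tilde\alpha+n+1]$. This identification is precisely what makes both conditions in the hypothesis necessary and sufficient for simplicity.
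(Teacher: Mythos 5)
The paper itself offers no proof of this proposition: it is imported from \cite{AAB} (the remark that follows it in the text only translates notation). So your singular-vector argument supplies a genuine proof where the paper has none, and its skeleton is the right one: a nonzero submodule of a finite-dimensional weight module contains a nonzero vector killed by both $E_1$ and $E_2$, so it suffices to show that all such vectors are multiples of $v$. Your central computation is correct: with $u=[n+1]F_{12}v-q^{-n}F_1F_2v$ one indeed has $E_1(F_{12}v)=q^{-n}F_2v$, $E_1(F_1F_2v)=[n+1]F_2v$, $E_2(F_2v)=[\tilde\alpha]v$ and $E_2(u)=-q[\tilde\alpha+n+1]F_1v$, all consistent with the explicit action in Theorem \ref{T:Structure}, and the degeneration at $n=0$ (where $u=0$ and $F_1v=0$) is harmless.

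Two steps need repair. First, your formula for $E_2(u')$ is wrong on weight grounds alone: $u'=F_2u$ lies in the $(K_1,K_2)$-weight space $(q^{n},q^{\tilde\alpha+1})$, so $E_2(u')$ has weight $(q^{n-1},q^{\tilde\alpha+1})$, whereas $F_1v$ has weight $(q^{n-2},q^{\tilde\alpha+1})$. The correct computation, using $E_2F_2+F_2E_2=(K_2-K_2^{-1})/(q-q^{-1})$, gives $E_2(F_2u)=[\tilde\alpha+1]u+q[\tilde\alpha+n+1]F_2F_1v$; since $u$ and $F_2F_1v$ are linearly independent for $n\ge1$, this is still nonzero when $[\tilde\alpha+n+1]\neq0$ (in fact it equals $[n]\bigl(q^{-\tilde\alpha}w_{1,0,1}-[\tilde\alpha+1]w_{0,1,0}\bigr)$, which never vanishes for $n\ge 1$), so your conclusion survives. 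Second, your count of exactly four candidates rests on the middle layer $V_n\otimes\mathrm{span}(F_2,F_{12})$ splitting into irreducible $U_q(\sll_2)$-blocks $V_{n+1}\oplus V_{n-1}$, each with a one-dimensional $E_1$-top. At a root of unity this fails precisely in the boundary case $n=\rt'-1$, which the proposition must cover: the would-be constituent of highest weight $q^{\rt'}$ is not irreducible and the tensor product need not split. The fact you actually need --- that $\ker E_1$ is exactly four-dimensional for all $0\le n\le\rt'-1$ --- remains true, but should be verified directly from Equation \eqref{E:ActionVntildea_E1}: since $[p][n-p+1]\neq0$ for $1\le p\le n\le\rt'-1$, the kernel is spanned by $w_{0,0,0}$, $w_{1,0,0}$, $q^{n+1}w_{1,0,1}+[n]w_{0,1,0}$ and $w_{1,1,0}$. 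With these two corrections the proof is complete.
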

\begin{remark} We use slightly different notation than  \cite {AAB}.  Our module $V(n, \tilde{\alpha})$ corresponds to the module from \cite[page 873]{AAB}  with the following parameters: 
$$\omega =1, \ N=n+1, \ \lambda_1=q^n,\ \mu_1=n=N-1, \ \lambda_2=q^{\alpha}, \ \mu_2=\alpha.$$
\end{remark}

Since 
$$[x]=0\Leftrightarrow  \frac{q^x-q^{-x}}{q-q^{-1}}=0\Leftrightarrow  q^{x}-q^{-x}=0\Leftrightarrow  q^{2x}=1\Leftrightarrow  x\in  \frac {\rt}{2} \Z$$ 
then the above proposition implies that $V(n,\tilde{\alpha})$ is simple if $\tilde{\alpha}\in  (\frac{\rt}{2}\Z)/\rt\Z\cup ((\frac{\rt}{2}\Z)-(n+1))/\rt\Z $.  In particular, if $\bar{\alpha}\notin \{ \bar{0}, \bar{\frac{\rt}{2}} \} \subseteq \C/\Z$ then $V(n,\tilde{\alpha})$ is simple. \label{R:simple}

\begin{theorem}[\cite{AAB}] \label{T:Structure} 
Let $n\in  \{0,...,{\rt}'-1 \}$ and  $ \tilde{\alpha} \not\in  (\frac{\rt}{2}\Z)/\rt\Z\cup ((\frac{\rt}{2}\Z)-(n+1))/\rt\Z$.  Then $V(n,\tilde{\alpha })$ has a basis 
 $ \{ w_{\rho ,\sigma ,p}| p\in \{ 0,...,n \}; \rho ,\sigma \in \{ 0,1\} \}$ whose action of $\Uq$ is given by: 
\begin{align}\label{E:ActionVntildea_K}
K_1\cdot w_{\rho ,\sigma ,p}&= q^{\rho -\sigma +n-2p} w_{\rho ,\sigma ,p},  &
K_2\cdot w_{\rho ,\sigma ,p}&= q^{\tilde{\alpha }+\sigma+p} w_{\rho ,\sigma ,p},
\end{align}
\begin{align}\label{E:ActionVntildea_F}
F_1\cdot w_{\rho ,\sigma ,p}&= q^{\sigma-\rho} w_{\rho ,\sigma ,p+1}-\rho (1-\sigma )q^{-\rho }w_{\rho -1,\sigma +1,p}, &
F_2\cdot w_{\rho ,\sigma ,p}&= (1-\rho ) w_{\rho+1 ,\sigma ,p},
\end{align}
\begin{equation}\label{E:ActionVntildea_E1}
E_1\cdot w_{\rho ,\sigma ,p}= -\sigma (1-\rho) q^{n-2p+1} w_{\rho+1 ,\sigma-1 ,p}+ [p] [n-p+1] w_{\rho,\sigma ,p-1},
\end{equation}
\begin{equation}\label{E:ActionVntildea_E2}
E_2\cdot w_{\rho ,\sigma ,p}= \rho [\tilde{\alpha }+p+\sigma] w_{\rho-1 ,\sigma ,p}+\sigma (-1)^{\rho} q^{-\tilde{\alpha}-p} w_{\rho,\sigma-1,p+1}. 
\end{equation}
Here the super grading of this basis is given by $ \p{w_{\rho ,\sigma ,p}}={\rho+\sigma}\in \Z/2\Z$.
\end{theorem}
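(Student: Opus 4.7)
\emph{Proof plan.} Recall $V(n, \tilde{\alpha})$ is defined as the simple highest weight module with highest weight vector $v$ satisfying $E_i v = 0$, $K_1 v = q^n v$, and $K_2 v = q^{\tilde{\alpha}} v$. The plan is to set $w_{0,0,0} := v$, define the remaining $4(n+1)$ basis vectors by iterated application of the lowering operators $F_1$ and $F_2$, and then verify \eqref{E:ActionVntildea_K}--\eqref{E:ActionVntildea_E2} inductively using the defining relations of $\Uq$.

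Explicitly, set $w_{0,0,p} := F_1^p v$ for $0 \le p \le n$, $w_{1,0,p} := F_2 w_{0,0,p}$, $w_{0,1,p} := w_{1,0,p+1} - q F_1 w_{1,0,p}$ (so that $w_{0,1,p} = (F_2 F_1 - q F_1 F_2) F_1^p v$ is built from a $q$-commutator of the two lowering operators), and $w_{1,1,p} := F_2 w_{0,1,p}$. The $K_1, K_2$ actions \eqref{E:ActionVntildea_K} follow from $K_i F_j K_i^{-1} = q^{-a_{ij}} F_j$ by tracking weights: each $F_1$ multiplies the $(K_1, K_2)$-eigenvalue pair by $(q^{-2}, q)$ and each $F_2$ multiplies it by $(q, 1)$. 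The $F_1, F_2$ actions \eqref{E:ActionVntildea_F} then follow from the inductive definitions together with the relation $F_2^2 = 0$, which is forced by $F_2$ being odd and by the quantum Serre relation \eqref{E:QserreA2}.

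The substantive content is verifying the $E_1, E_2$ actions in \eqref{E:ActionVntildea_E1} and \eqref{E:ActionVntildea_E2}. One moves $E_i$ past the product of $F_j$'s defining each $w_{\rho,\sigma,p}$ using the relation $[E_i, F_j] = \delta_{ij}(K_i - K_i^{-1})/(q - q^{-1})$ (an anti-commutator when $i = j = 2$, since both $E_2$ and $F_2$ are odd) and then invokes $E_i v = 0$ to annihilate the rightmost term. The standard $\sll_2$-style ladder calculation for $E_1 F_1^p v$ yields the coefficient $[p][n - p + 1]$. Commuting $E_2$ past the single $F_2$ appearing in $w_{\rho,\sigma,p}$ produces the eigenvalue of $(K_2 - K_2^{-1})/(q - q^{-1})$ on an intermediate weight vector of $K_2$-eigenvalue $q^{\tilde{\alpha}+p+\sigma}$, giving the coefficient $[\tilde{\alpha} + p + \sigma]$. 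The cross terms $-\sigma(1-\rho) q^{n-2p+1} w_{\rho+1, \sigma-1, p}$ and $\sigma(-1)^\rho q^{-\tilde{\alpha}-p} w_{\rho, \sigma-1, p+1}$ arise from commuting $E_i$ past the mixing operator $F_2 F_1 - q F_1 F_2$ hidden in the definition of $w_{0,1,p}$ and $w_{1,1,p}$.

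The main obstacle is the bookkeeping: the prefactors $q^{\sigma-\rho}$, $q^{-\rho}$, and so on, together with the signs $(-1)^\rho$ coming from the super structure, must be tracked consistently across all four sectors indexed by $(\rho, \sigma)$, and the inductive commutation past the $q$-commutator $F_2 F_1 - q F_1 F_2$ is where signs are most easily mismanaged. Linear independence of the $4(n+1)$ vectors follows from the weight decomposition under $(K_1, K_2)$, while the typicality hypothesis $\tilde{\alpha} \notin (\tfrac{\rt}{2}\Z)/\rt\Z \cup ((\tfrac{\rt}{2}\Z) - (n+1))/\rt\Z$ ensures, via the earlier Proposition, that $V(n, \tilde{\alpha})$ is simple of the expected dimension $4(n+1)$, so that the above explicit formulas exhaust the module without further quotient.
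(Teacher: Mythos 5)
The paper gives no proof of this theorem: it is imported directly from \cite{AAB} (hence the citation in the theorem header), so there is no internal argument to compare yours against. Your plan is the natural from-scratch verification, and its skeleton is consistent with the stated formulas: with $w_{0,0,p}=F_1^{p}v$, $w_{1,0,p}=F_2w_{0,0,p}$, $w_{0,1,p}=w_{1,0,p+1}-qF_1w_{1,0,p}=(F_2F_1-qF_1F_2)F_1^{p}v$ and $w_{1,1,p}=F_2w_{0,1,p}$, the relations \eqref{E:ActionVntildea_K} and \eqref{E:ActionVntildea_F} are reproduced exactly, and a spot check of $E_2w_{1,0,p}=[\tilde{\alpha}+p]\,w_{0,0,p}$ via the anticommutator $E_2F_2+F_2E_2=(K_2-K_2^{-1})/(q-q^{-1})$ agrees with \eqref{E:ActionVntildea_E2}.

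Two soft spots should be repaired. First, $F_2^2=0$ is \emph{not} a consequence of the quantum Serre relations \eqref{E:QserreA2} or of $F_2$ being odd; for the isotropic odd generator it is an independent defining relation of $\Uq$ which the presentation in the paper happens to omit but which your verification (and \cite{AAB}) uses, e.g.\ to get $F_2w_{1,0,p}=0$. You should state it as an assumed relation rather than derive it from \eqref{E:QserreA2}. Second, linear independence does not follow from the weight decomposition alone: $w_{1,0,p+1}$ and $w_{0,1,p}$ carry the same $(K_1,K_2)$-weight $(q^{\,n-2p-1},q^{\tilde{\alpha}+p+1})$, so several weight spaces are two-dimensional. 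The clean way to close, which you gesture at in your final sentence, is that the $4(n+1)$ vectors span $U^-v=V(n,\tilde{\alpha})$ (via the PBW-type spanning set $F_1^{a}F_3^{b}F_2^{c}v$ with $b,c\in\{0,1\}$, $F_3=F_2F_1-qF_1F_2$), and typicality gives $\dim V(n,\tilde{\alpha})=4(n+1)$, so a spanning set of that cardinality is a basis. With these two points fixed the plan is sound, though the full case-by-case verification of \eqref{E:ActionVntildea_E1}--\eqref{E:ActionVntildea_E2} across all four $(\rho,\sigma)$ sectors remains outlined rather than carried out.
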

Since $w_{1,1,n}$ is a lowest weight vector of $V(n,\tilde{\alpha } )$ with weight $(-n, \tilde{\alpha} +\tilde{n}+\tilde{1} )$ then we have
$$V(n,\tilde{\alpha } )^{\ast }=V(n, -\tilde{\alpha }-\tilde{n}-\tilde{1}).$$ We will use the modules of the form $V(0,\tilde\alpha)$ extensively. With this in mind we highlight the structure of such modules.   If $ \tilde{\alpha} \not\in  (\frac{\rt}{2}\Z)/\rt\Z\cup ((\frac{\rt}{2}\Z)-(n+1))/\rt\Z$ then $V(0,\tilde{\alpha })$ is a 4-dimensional module with the following action:
\begin{center}
  \begin{tabular}{ | l || c | c | c | c | }
    \hline
      & $w_{0,0}$ & $w_{1,0}$ & $w_{0,1}$ & $w_{1,1}$  \\ 
      & $ \rho =0, \sigma =0$  & $ \rho =0, \sigma =0$ & $ \rho =0, \sigma =0$ & $ \rho =0, \sigma =0$ \\ \hline
      $K_1$ & $w_{0,0}$ & $qw_{1,0}$ & $q^{-1}w_{0,1}$ & $w_{1,1}$  \\ \hline
      $K_2$ & $q^{\tilde{\alpha } }w_{0,0}$ & $q^{\tilde{\alpha } }w_{1,0}$ & $q^{\tilde{\alpha }+1 }w_{0,1}$ & $q^{\tilde{\alpha }+1 }w_{1,1}$  \\ \hline
      $E_1$ & $0$ & $0$ & $w_{1,0}$ & $0$  \\ \hline
      $E_2$ & $0$ & $[\tilde{\alpha } ]w_{0,0}$ & $0$ & $[\tilde{\alpha }+1 ]w_{0,1}$  \\ \hline
      $F_1$ & $0$ & $w_{0,1}$ & $0$ & $0$  \\ \hline
      $F_2$ & $w_{1,0}$ & $0$ & $w_{1,1}$ & $0$  \\ \hline
  \end{tabular}
\end{center}

We will use the following lemma in the proof of the Decomposition Lemma below.

\begin{lemma}\label{L:FourDirSum} Let $V$ be an object in $\cat$. 
Suppose  $V_1,...,V_n$ are simple submodules of $V$ such that  $V_i$ is not isomorphic to $ V_j$ for all $i\not=j$ and 
$$\dim(V_1)+...+\dim(V_n)=\dim(V).$$
Then $V=V_1 \oplus... \oplus V_n.$
\end{lemma}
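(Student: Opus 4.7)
The plan is to first establish, by induction on $n$, that a finite collection of pairwise non-isomorphic simple submodules of any object has internal sum equal to internal direct sum; the dimension hypothesis will then force that direct sum to fill up $V$.

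For the induction, the case $n=1$ is trivial. Assume the claim for $n-1$, so that $W:=V_1+\cdots+V_{n-1}$ is a genuine internal direct sum $V_1\oplus\cdots\oplus V_{n-1}$ with associated projections $p_i\colon W\twoheadrightarrow V_i$ for $i=1,\ldots,n-1$. The intersection $V_n\cap W$ is a subobject of the simple object $V_n$, hence equals $0$ or $V_n$. Assume for contradiction that $V_n\cap W=V_n$, i.e.\ $V_n\subseteq W$. For each $i<n$, composing the inclusion $V_n\hookrightarrow W$ with $p_i$ yields a morphism $f_i\colon V_n\to V_i$ between two simple objects; by Schur's lemma in our abelian $\C$-linear category (where simplicity means $\End(V_i)=\C\,\Id$, so any non-zero morphism between simples is an isomorphism) together with the hypothesis $V_n\not\cong V_i$, each $f_i$ vanishes. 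But then $V_n\subseteq\bigcap_{i<n}\ker p_i=0$, contradicting simplicity. Hence $V_n\cap W=0$, and therefore $V_1+\cdots+V_n=V_1\oplus\cdots\oplus V_n$ inside $V$.

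Finally, the hypothesis $\sum_{i=1}^n\dim V_i=\dim V$ together with the direct sum decomposition just established gives $\dim(V_1\oplus\cdots\oplus V_n)=\dim V$, so this submodule of $V$ coincides with $V$ on dimensional grounds.

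I do not expect any substantive obstacle: the only non-trivial ingredient is Schur's lemma, which holds in the present setting because the ambient category is abelian and $\C$-linear with simple objects characterized by $\End(V)=\C\,\Id_V$, so the standard kernel/image dichotomy forces any morphism between non-isomorphic simples to vanish. The rest is elementary linear algebra together with the induction bookkeeping sketched above.
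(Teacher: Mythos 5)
Your proof is correct and follows essentially the same strategy as the paper's: an induction showing the sum of pairwise non-isomorphic simple submodules is direct, using that a non-zero submodule of a simple is everything and that $\Hom$ between non-isomorphic simples vanishes, followed by the dimension count. The paper phrases the contradiction via $\Hom_\cat(V_j,\bigoplus_s V_{i_s})=\bigoplus_s\Hom_\cat(V_j,V_{i_s})=0$ rather than via $\bigcap_i\ker p_i=0$, but this is the same argument.
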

\begin{proof} 
Consider the statement $P(k)$: if $ i_1,...,i_k,j \in \{1,...,n\} $  are all different then $$V_j \cap(V_{i_1}+...+V_{i_k})=\{ 0 \}.$$
If $P(k)$ were true for $k \in \{1,...,n-1 \}$ then $ V_1\oplus...\oplus V_n$ would be a submodule of $V$.  The hypothesis on the dimensions would then imply 
 $$V=V_1\oplus...\oplus V_n.$$  Thus, it suffices to prove $P(k)$ holds for $k \in \{1,...,n-1 \}$.  We will do this by induction.

First, we will show $P(1)$ holds. Let $i,j \in \{1,...,n\}$ such that $V_i \cap V_j \neq \{0\}$ and $i\not=j$.   
Therefore, there is a non-zero vector in $V_i\cap V_j$ which generates a submodule  $W$ of 
$V_i\cap V_j$.  In particular, $W$ is a submodule of both $V_i$ and $V_j$.  Since these modules are simple we have $W$ is equal to both  $V_i$ and $V_j$.  Thus, $V_i=V_j$ which is a contradiction. 

Next assuming $P(k)$ is true we will show $P(k+1)$ holds.   
Let $i_1,...,i_{k+1}$ and $j$ be unique elements of $\{1,...,n\}$.  
Suppose by contradiction that $V_j \cap(V_{i_1}+...+V_{i_{k+1}})\neq\{ 0 \}$ and let $v$ be a non-zero vector in this intersection. Let  $<v>$ be the non-zero module generated by $v$.  
Since $v \in V_j$ then $<v>$  is a submodule of $V_j$.   But $V_j$ is simple so $<v>=V_j$.  
Similarly, $v\in V_{i_1}+...+V_{i_{k+1}}$ implying $<v>$ is in this sum and we conclude 
$$V_j\subseteq V_{i_1}+...+V_{i_{k+1}}.$$

From the induction step for $P(k)$, we deduce that $V_{i_1}+...+V_{i_{k+1}}=V_{i_1}\oplus...\oplus V_{i_{k+1}}$.  
 Combining the last two observations, we have $V_j$ is a submodule of  $V_{i_1}\oplus...\oplus V_{i_{k+1}}.$  
This implies $\Hom_\cat(V_j,V_{i_1}\oplus...\oplus V_{i_{k+1}}) \neq \{0\}$ since the inclusion morphism is in this space.

On the other hand, since the simple modules $V_j$ and $V_{i_s}$ are non-isomorphic for $s \in \{1,..,k+1 \}$ then $\Hom_\cat(V_j,V_{i_s})= \{0 \}$.  This implies  $$\Hom_\cat(V_j,V_{i_1}\oplus...\oplus V_{i_{k+1}})=\Hom_\cat(V_j,V_{i_1})\oplus...\oplus \Hom_\cat(V_j, V_{i_{k+1}})=\{0\}.$$
But above we showed this homomorphism space was non-zero so we have a contradiction.   Thus, the induction step is complete.
  \end{proof}

\begin{lemma}(Decomposition Lemma)\label{L:DecompV0Vn}
Let $\tilde{\alpha}, \tilde{\beta}\in \C/{\rt}\Z$ with $\bar{\alpha },\bar{\beta }\not\in \{ \bar{0}, \bar{\frac\rt2} \}$ and $\bar{\alpha }+\bar{\beta }\not \in \{ \bar{0} ,\bar{\frac\rt2} \}$.  Then for any $n\in  \{ 0,...,{\rt}'-2 \} $ we have 
$$V(0,\tilde{\alpha } )\otimes V(n,\tilde{\beta } )=V(n,\tilde{\alpha }+\tilde{\beta})\oplus  V(n+1,\tilde{\alpha }+\tilde{\beta})\oplus  V(n-1,\tilde{\alpha }+\tilde{\beta}+1)\oplus V(n,\tilde{\alpha }+\tilde{\beta}+1)$$
where we use the convention that $V(-1,\tilde{\alpha }+\tilde{\beta}+1) =0$ in the case when $n=0$.
\end{lemma}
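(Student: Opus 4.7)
The plan is to identify four explicit highest weight vectors inside $V(0,\tilde\alpha)\otimes V(n,\tilde\beta)$, one for each summand on the right-hand side, then generate the corresponding simple submodules, and finally apply Lemma \ref{L:FourDirSum}.

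First, I would check that the dimensions balance. The left-hand side has dimension $4\cdot 4(n+1)=16(n+1)$, while the right-hand side has dimension $4(n+1)+4(n+2)+4n+4(n+1)=16(n+1)$, with the understanding that the third summand contributes $0$ when $n=0$ (in which case the total becomes $16=16(0+1)$). Next I would observe that the genericity hypothesis on $\bar\alpha,\bar\beta,\bar\alpha+\bar\beta$ combined with $0\leq n\leq\rt'-2$ implies that the four candidate summands $V(n,\tilde\alpha+\tilde\beta)$, $V(n+1,\tilde\alpha+\tilde\beta)$, $V(n-1,\tilde\alpha+\tilde\beta+1)$, $V(n,\tilde\alpha+\tilde\beta+1)$ all satisfy the simplicity criterion of the Proposition on page \pageref{R:simple} (the relevant quantum integers are non-zero), so these modules are simple. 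Moreover the four modules are pairwise non-isomorphic: the first two differ from the last two by the $K_2$-weight of their highest weight vector ($q^{\tilde\alpha+\tilde\beta}$ versus $q^{\tilde\alpha+\tilde\beta+1}$), while inside each pair the $K_1$-highest weights are $q^n$ versus $q^{n+1}$ (respectively $q^{n-1}$ versus $q^{n}$), which are distinct under the genericity conditions.

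The core of the argument is the construction of the four highest weight vectors. Using the explicit basis $\{w_{\rho,\sigma}\}$ of $V(0,\tilde\alpha)$ tabulated above and the basis $\{w_{\rho,\sigma,p}\}$ of $V(n,\tilde\beta)$ from Theorem \ref{T:Structure}, together with the coproduct formulas of Section \ref{SS:UqDef} (in particular $\Delta(E_i)=E_i\otimes 1+K_i^{-1}\otimes E_i$), I would look for vectors $v$ killed by both $E_1$ and $E_2$ and of the prescribed $K_1,K_2$-weights. The obvious choice $v_1:=w_{0,0}\otimes w_{0,0,0}$ has $(K_1,K_2)$-weight $(q^n,q^{\tilde\alpha+\tilde\beta})$ and is killed by $E_1,E_2$ since $w_{0,0}$ is $E$-invariant and $w_{0,0,0}$ is a highest weight vector; so $v_1$ generates a copy of $V(n,\tilde\alpha+\tilde\beta)$. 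For the three remaining summands I would write an ansatz of the form
$$v = \sum_{\rho,\sigma,p} c_{\rho,\sigma,p}\, w_{\rho,\sigma}\otimes w_{\rho',\sigma',p}$$
restricted to basis vectors of the correct total $(K_1,K_2)$-weight, and solve the linear system $E_1 v=E_2 v=0$ using formulas \eqref{E:ActionVntildea_E1}--\eqref{E:ActionVntildea_E2} and the table. Concretely, for $V(n+1,\tilde\alpha+\tilde\beta)$ the candidate involves $w_{1,0}\otimes w_{0,0,0}$ and $w_{0,0}\otimes w_{1,0,0}$; for $V(n,\tilde\alpha+\tilde\beta+1)$ it involves combinations with $w_{0,1}\otimes\cdot$ and $w_{0,0}\otimes w_{0,1,\cdot}$; and for $V(n-1,\tilde\alpha+\tilde\beta+1)$ (defined only when $n\geq 1$) a similar three-term combination. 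The genericity assumptions on $\bar\alpha,\bar\beta,\bar\alpha+\bar\beta$ guarantee that the coefficients produced by solving $E_iv=0$ have non-vanishing denominators.

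Once the four vectors $v_1,v_2,v_3,v_4$ are constructed, each generates a highest weight submodule whose highest weight matches one of the four claimed simple $\Uq$-modules; by the simplicity established above, each such submodule is isomorphic to the corresponding $V(m,\tilde\gamma)$. Since the four claimed simple modules are pairwise non-isomorphic and their dimensions sum to $\dim(V(0,\tilde\alpha)\otimes V(n,\tilde\beta))$, Lemma \ref{L:FourDirSum} gives the desired direct sum decomposition. The main technical obstacle is the bookkeeping required to solve the $E_1v=E_2v=0$ linear systems and verify that the denominators appearing are controlled by the genericity hypothesis; the case $n=0$, where the third summand $V(-1,\tilde\alpha+\tilde\beta+1)$ drops out, must be treated as a degenerate sub-case of the same argument with one fewer generator.
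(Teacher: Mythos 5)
Your proposal follows essentially the same route as the paper: exhibit four explicit highest weight vectors by solving $E_1v=E_2v=0$ with an ansatz in the weight basis, identify the submodules they generate with the four simple summands via the classification of highest weight modules and the genericity hypotheses, and conclude by the dimension count together with Lemma \ref{L:FourDirSum}. The only notable difference is that you explicitly verify the pairwise non-isomorphism of the four summands (a hypothesis of Lemma \ref{L:FourDirSum} that the paper leaves implicit), which is a welcome addition rather than a divergence.
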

\begin{proof} 
We will prove the case when $n\not=0$ (the case n=0 will be analogous).
Since $\bar{\alpha },\bar{\beta }\notin \{\bar 0, \bar{\frac\rt2}\}$, it means that $V(0,\tilde{\alpha } )$ and $V(n,\tilde{\beta} )$ have the structure described in 
Theorem \ref{T:Structure}: let  $\{w_{\rho,\sigma}^{({0,\tilde{\alpha}})}\}$ and $
\{w_{\rho,\sigma,p}^{({n,\tilde{\beta}})}\}$ be the bases of $V(0,\tilde{\alpha } )$ and $V(n,\tilde{\beta} )$, respectively. 

  We will prove that there are four highest weight vectors:
$$ v_{(n,\tilde{\alpha }+\tilde{\beta})}, v_{(n+1,\tilde{\alpha }+\tilde{\beta})}, v_{(n-1,\tilde{\alpha }+\tilde{\beta}+1)}, v_{(n,\tilde{\alpha }+\tilde{\beta}+1)} \in V(0,\tilde{\alpha } )\otimes V(n,\tilde{\beta } )$$
where the weight of $v_{(i,\tilde{\gamma }  )}$ is $(q^i,q^{\tilde{\gamma }} )$. 
First, clearly $v_{(n,\tilde{\alpha }+\tilde{\beta})}:=w_{0,0}^{({0,\tilde{\alpha}})}\otimes w_{0,0,0}^{({n,\tilde{\beta}})}$ is a highest weight vector of weight $(n,\tilde{\alpha }+\tilde{\beta})$.   
Second, we want to find a highest weight vector $v_2=v_{(n+1,\tilde{\alpha }+\tilde{\beta})}$ with weight $(q^{n+1},q^{\tilde{\alpha}+\tilde{\beta}})$.  
We'll search for $v_2$ as a combination of the form:
$$v_2=w_{0,0}^{({0,\tilde{\alpha}})}\otimes w_{1,0,0}^{({n,\tilde{\beta}})}+c\cdot w_{1,0}^{({0,\tilde{\alpha}})}\otimes w_{0,0,0}^{({n,\tilde{\beta}})}.$$ 
To find $c$ we check that $E_1$ and $E_2$ act by zero.  For any $c$ we have   
$E_1(v_2)=0$.  On the other hand, 
$E_2(v_2)=0$ implies $c=-q^{-\tilde{\alpha}}\cdot  \frac {[\tilde{\beta}]}{[\tilde{\alpha}]}$.  
So, 
$$v_2=w_{0,0}^{({0,\tilde{\alpha}})}\otimes w_{1,0,0}^{({n,\tilde{\beta}})}-q^{-\tilde{\alpha}}\cdot  \frac {[\tilde{\beta}]}{[\tilde{\alpha}]}\cdot w_{1,0}^{({0,\tilde{\alpha}})}\otimes w_{0,0,0}^{({n,\tilde{\beta}})}$$
is a highest weight vector.  

Third, we want a highest weight vector 
$v_3=v_{(n-1,\tilde{\alpha }+\tilde{\beta}+1)}$
of the form
$$c_1 w_{0,0}^{({0,\tilde{\alpha}})}\otimes w_{0,1,0}^{({n,\tilde{\beta}})}+c_2w_{0,0}^{({0,\tilde{\alpha}})}\otimes w_{1,0,1}^{({n,\tilde{\beta}})}+c_3w_{1,0}^{({0,\tilde{\alpha}})}\otimes w_{0,0,1}^{({n,\tilde{\beta}})}+c_4 w_{0,1}^{({0,\tilde{\alpha}})}\otimes w_{0,0,0}^{({n,\tilde{\beta}})}.$$
After checking the conditions which come from the action of $E_1$ and $E_2$, and setting $c_2=1$ we obtain:
\begin{multline*}
v_3=q^{-(n+1)}[1][n]\cdot w_{0,0}^{({0,\tilde{\alpha}})}\otimes w_{0,1,0}^{({n,\tilde{\beta}})}+w_{0,0}^{({0,\tilde{\alpha}})}\otimes w_{1,0,1}^{({n,\tilde{\beta}})}
\\
- \frac{1}{[\tilde{\alpha}]} 
(q^{-(\tilde{\alpha}+\tilde{\beta}+n+1)}[1][n]
+q^{- \tilde{\alpha}}[\tilde{\beta}+1]) \cdot w_{1,0}^{({0,\tilde{\alpha}})}\otimes w_{0,0,1}^{({n,\tilde{\beta}})}
\\
- \frac{q^{-1}[1][n]}{[\tilde{\alpha}]} (q^{-(\tilde{\alpha}+\tilde{\beta}+n+1)})[1][n]+q^{-\tilde{\alpha}}[\tilde{\beta}+1])\cdot w_{0,1}^{({0,\tilde{\alpha}})}\otimes w_{0,0,0}^{({n,\tilde{\beta}})}.
 \end{multline*}
Similarly we obtain:
\begin{multline*}
v_4=v_{(n,\tilde{\alpha }+\tilde{\beta}+1)}=-q^{-\tilde{\alpha}}\frac{[\tilde{\alpha}]}{[\tilde{\beta}+1]}w_{0,0}^{({0,\tilde{\alpha}})}\otimes w_{1,1,0}^{({n,\tilde{\beta}})}
\\
+q^{-\tilde{\alpha}-1}\frac{[\tilde{\beta}]}{[\tilde{\alpha}+1]}(q^{-n}+q^{-1-\tilde{\beta}}\frac{[1][n]}{[\tilde{\beta}+1]})w_{1,1}^{({0,\tilde{\alpha}})}\otimes w_{0,0,0}^{({n,\tilde{\beta}})}
\\
+(q^{-n}+q^{-1-\tilde{\beta}}\frac{[1][n]}{[\tilde{\beta}+1]})w_{0,1}^{({0,\tilde{\alpha}})}\otimes w_{1,0,0}^{({n,\tilde{\beta}})}
\\
+w_{1,0}^{({0,\tilde{\alpha}})}\otimes w_{0,1,0}^{({n,\tilde{\beta}})}-\frac{q^{-\tilde{\beta}}}{[\tilde{\beta}+1]}w_{1,0}^{({0,\tilde{\alpha}})}\otimes w_{1,0,1}^{({n,\tilde{\beta}})}.
\end{multline*}

Consider the submodule $W_{(i,\tilde{\gamma }  )}$ of $ V(0,\tilde{\alpha } )\otimes V(n,\tilde{\beta } )$ generated by one of  the highest weight vectors $v_{(i,\tilde{\gamma }  )}$ constructed above.      
As mentioned above the classification of $U_q(sl(2|1))$-highest weight modules is given in \cite{AAB}.    
From this classification, since $W_{(i,\tilde{\gamma }  )}$ is a highest weight module of weight $(q^i,q^{\tilde{\gamma}})$, with $\bar{\gamma}=\bar{\alpha}+\bar{\beta}\notin \{\bar0, \bar{\frac{\rt}{2}}\}$ it follows that  $W_{(i,\tilde{\gamma }  )}$ is isomorphic to $V(i, \tilde{\gamma})$ 
and is a simple of dimension $4(i+1)$.  

Thus, we have 
$$\dim(W_{(n,\tilde{\alpha }+\tilde{\beta})})+\dim(W_{(n+1,\tilde{\alpha }+\tilde{\beta})})+\dim(W_{(n-1,\tilde{\alpha }+\tilde{\beta}+1)})+\dim(W_{(n,\tilde{\alpha }+\tilde{\beta}+1)})$$
$$=4((n+1)+(n+2)+n+(n+1))=4(4n+4)=16(n+1).$$
But $\dim(V(0,\tilde{\alpha } )\otimes V(n,\tilde{\beta } ))=4\cdot 4(n+1)=16(n+1)$.
So, the four submodules satisfy the conditions of Lemma \ref{L:FourDirSum}, which means that their direct sum is isomorphic to $V(0,\tilde{\alpha } )\otimes V(n,\tilde{\beta } )$.
\end{proof}
From the previous result, we obtain that, with some weight restrictions, the decomposition of the tensor product of two typical modules depends just on the total weight-sum, and it is independent on the two separate components. More precisely:  

\begin{corollary}\label{CorDecomp}
Consider $\tilde{\alpha}, \tilde{\beta}\in \C/{\rt}\Z, n\in  \{ 0,...,{\rt}'-1 \}$ such that 
$\bar{\alpha },\bar{\beta },\bar{\alpha }+\bar{\beta }\notin\{\bar0, \bar{\frac\rt2}\}$.  Then
$$ V(0,\tilde{\alpha})\otimes V(n,\tilde{\beta})\simeq V(0,\widetilde{\alpha+\epsilon})\otimes V(n,\widetilde{\beta-\epsilon})$$
for any $\epsilon \in \C$ such that $\overline{\alpha+\epsilon},\overline{\beta-\epsilon}\notin\{\bar0, \bar{\frac\rt2}\}$.
\end{corollary}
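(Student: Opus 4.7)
The plan is to apply the Decomposition Lemma (Lemma \ref{L:DecompV0Vn}) to both sides of the claimed isomorphism and observe that the two resulting direct sum decompositions are literally the same, because the individual shifts by $\epsilon$ and $-\epsilon$ cancel in every summand.

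More concretely, I would first verify that the hypotheses of Lemma \ref{L:DecompV0Vn} are met on each side. The left tensor product $V(0,\tilde{\alpha})\otimes V(n,\tilde{\beta})$ needs $\bar{\alpha},\bar{\beta},\bar{\alpha}+\bar{\beta}\notin\{\bar{0},\bar{\frac{\rt}{2}}\}$, which is exactly what is assumed. For the right tensor product $V(0,\widetilde{\alpha+\epsilon})\otimes V(n,\widetilde{\beta-\epsilon})$, the hypotheses on the two tensor factors are $\overline{\alpha+\epsilon},\overline{\beta-\epsilon}\notin\{\bar{0},\bar{\frac{\rt}{2}}\}$ (guaranteed by the assumption on $\epsilon$), while the hypothesis on the sum reads $\overline{\alpha+\epsilon}+\overline{\beta-\epsilon}=\bar{\alpha}+\bar{\beta}\notin\{\bar{0},\bar{\frac{\rt}{2}}\}$, which is again given.

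Next, I would apply Lemma \ref{L:DecompV0Vn} to each side. Setting $\tilde{\gamma}:=\tilde{\alpha}+\tilde{\beta}$, the left-hand side decomposes as
\begin{equation*}
V(n,\tilde{\gamma})\oplus V(n+1,\tilde{\gamma})\oplus V(n-1,\tilde{\gamma}+\tilde{1})\oplus V(n,\tilde{\gamma}+\tilde{1}).
\end{equation*}
For the right-hand side, the relevant total weight is $\widetilde{\alpha+\epsilon}+\widetilde{\beta-\epsilon}=\tilde{\gamma}$ as well (the $\epsilon$ cancels modulo $\rt\Z$), so the decomposition yields the exact same four summands. Hence the two tensor products are isomorphic.

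The one minor subtlety is the range of $n$: Lemma \ref{L:DecompV0Vn} is stated for $n\in\{0,\dots,\rt'-2\}$, whereas the corollary allows $n\in\{0,\dots,\rt'-1\}$. For $n$ in the common range, the argument above is immediate. The boundary case $n=\rt'-1$ (if truly intended) would have to be treated separately, most likely by directly comparing the highest-weight vectors constructed in the proof of Lemma \ref{L:DecompV0Vn} on each side; I do not expect a substantive obstacle here, since the proof of the lemma is really about producing highest-weight vectors whose weights depend only on $\tilde{\alpha}+\tilde{\beta}$. Overall, the main content of the corollary is the invariance of the decomposition under the shift $(\tilde{\alpha},\tilde{\beta})\mapsto(\widetilde{\alpha+\epsilon},\widetilde{\beta-\epsilon})$, and this is built into Lemma \ref{L:DecompV0Vn} by design.
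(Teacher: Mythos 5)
Your proof is correct and is exactly the argument the paper intends (the paper gives no explicit proof, deducing the corollary immediately from Lemma \ref{L:DecompV0Vn} since every summand in the decomposition depends only on $\tilde{\alpha}+\tilde{\beta}$). Your observation about the boundary case $n=\rt'-1$, which Lemma \ref{L:DecompV0Vn} does not literally cover, is a legitimate gap that the paper itself glosses over, and your suggested fix via the highest-weight-vector construction is the right way to handle it.
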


For $g\in \C/\Z$, let $\catd_g$ be the full subcategory of $\cat$ whose objects are all $\Uq$-module $V$ such that the central element  $K_2^{\rt}$ acts as multiplication by $q^{\rt g}$.  
In particular, for $0\leq n\leq \rt'-1$ and $\tilde{\alpha}\in \C/\rt\Z$ we have $V(n,\tilde{\alpha})\in \catd_{\bar{\alpha}}$.    This gives a $ \C/\Z$-grading on the category $\catd$ and we write  $\catd=\bigoplus_{g\in \C/\Z}\catd_g$. \label{E: Grading}

\subsection{The  subcategory $\cat$ of $\catd$}
Now we want to construct a subcategory $\cat$ of $\catd$ that will eventually (after taking a quotient) lead to our invariants for 3-manifolds. 

\begin{definition}\label{D:cat}
Set $\XX=(\frac{1}{4}\Z)/\Z$.  
Let $\cat$ the full sub-category of $\catd$ containing the trivial module and all retracts of a module of the form 
\begin{equation}\label{E:V0aV0aV0a}
V(0, \tilde{\alpha}_1)\otimes V(0, \tilde{\alpha}_2)\otimes...\otimes V(0, \tilde{\alpha}_n)
\end{equation}
  where $\tilde{\alpha}_1,...,\tilde{\alpha}_n\in \C/  \rt\Z$ such that $\bar{\alpha}_1,..., \bar{\alpha}_n\in (\C/\Z)\setminus \XX $. 
\end{definition} 
\begin{lemma}\label{L:catgraded}
The category $\cat$ is a 
$\C/\Z$-graded pivotal $\C$-category, where the grading and pivotal structure are   induced from $\catd$.  
\end{lemma}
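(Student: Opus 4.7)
The plan is to reduce everything to routine inheritance from $\catd$, with two small observations that one must make to close the argument.

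First I would verify that $\cat$ is closed under tensor product and composition, so that it is a genuine tensor subcategory of $\catd$. If $W$ is a retract of $V(0,\tilde\alpha_1)\otimes\cdots\otimes V(0,\tilde\alpha_n)$ and $W'$ is a retract of $V(0,\tilde\beta_1)\otimes\cdots\otimes V(0,\tilde\beta_m)$, then $W\otimes W'$ is a retract of the concatenated tensor product, whose factors all have weights $\bar\alpha_i,\bar\beta_j\in(\C/\Z)\setminus\XX$. The tensor unit is in $\cat$ by definition. Being a $\C$-category is inherited from $\catd$.

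Next I would verify closure under duality. Using $V(n,\tilde\alpha)^*=V(n,-\tilde\alpha-\tilde n-\tilde 1)$ with $n=0$, we get $V(0,\tilde\alpha)^*=V(0,-\tilde\alpha-\tilde 1)$, so
\[
\bigl(V(0,\tilde\alpha_1)\otimes\cdots\otimes V(0,\tilde\alpha_n)\bigr)^*\;\cong\;V(0,-\tilde\alpha_n-\tilde1)\otimes\cdots\otimes V(0,-\tilde\alpha_1-\tilde1).
\]
The key small observation is that $\XX=(\tfrac14\Z)/\Z$ is a subgroup of $\C/\Z$ closed under negation and containing $\bar 1=\bar 0$; hence $\overline{-\alpha_i-1}=-\bar\alpha_i$ lies outside $\XX$ iff $\bar\alpha_i$ does. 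Thus duals of generators of the form \eqref{E:V0aV0aV0a} are again of that form, and duals of retracts are retracts of duals. The duality morphisms $\coev$, $\ev$, $\tcoev$, $\tev$ from \eqref{E:DualityForCat} then restrict to $\cat$, giving it a pivotal structure induced from $\catd$.

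For the $\C/\Z$-grading, I set $\cat_g:=\cat\cap\catd_g$. Since $V(0,\tilde\alpha)\in\catd_{\bar\alpha}$, any tensor product \eqref{E:V0aV0aV0a} lies in $\catd_{\bar\alpha_1+\cdots+\bar\alpha_n}$, and retracts inherit the grading, so $\cat=\bigoplus_{g\in\C/\Z}\cat_g$. Axioms (1), (3), (4), (5) of Section \ref{SS:GSpher} are immediate from the corresponding axioms in $\catd$, using the observations of the previous two paragraphs. The only point that needs a small argument is non-emptiness of every $\cat_g$: for $g\notin\XX$ take $V(0,\tilde g)$ with any lift $\tilde g\in\C/\rt\Z$ of $g$; for $g\in\XX$ pick any $\bar\alpha\in(\C/\Z)\setminus\XX$ (such $\bar\alpha$ exist since $\XX$ is countable) and form $V(0,\tilde\alpha)\otimes V(0,\tilde\beta)$ where $\tilde\beta$ lifts $g-\bar\alpha$; here $\overline{g-\alpha}\notin\XX$ because $\XX$ is a subgroup of $\C/\Z$.

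Overall the lemma is a routine check; the only genuinely substantive ingredient is that $\XX$ is a subgroup of $\C/\Z$ closed under negation, which is exactly what makes both stability under duality and non-emptiness of all graded pieces go through.
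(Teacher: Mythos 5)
Your proof is correct and follows essentially the same route as the paper's: closure under tensor product via tensoring the retract maps, closure under duality via $V(0,\tilde\alpha)^*\cong V(0,-\tilde\alpha-\tilde1)$, and inheritance of the pivotal structure and grading from $\catd$. You in fact spell out two details the paper leaves implicit — why $\overline{-\alpha-1}=-\bar\alpha$ stays outside $\XX$ (since $\XX$ is a negation-closed subgroup of $\C/\Z$ and $\bar1=\bar0$) and why every graded piece $\cat_g$ is non-empty — both of which are handled correctly.
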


\begin{proof}
Let $W_1$ and $W_2$ in $\cat$.  From the definition, for $j=1,2$, there exists $ \tilde{\alpha}_{j,1},...,\tilde{\alpha}_{j,{n_j}}\in \C/\rt\Z$, with 
$ \bar{\alpha}_{j,1},...,\bar{\alpha}_{j,{n_j}} \in (\C/\Z)\setminus \XX$ such that $W_j$ be is retract of\\ 
$V_j:=V(0,\tilde{\alpha}_{j,1} )\otimes ...\otimes V(0,\tilde{\alpha}_{j,{n_j}})$.  Let   $p_j:V_j\to W_j$ and $q_j: W_j \to V_j$ be the morphisms of this retract.  Then $V_1\otimes V_2$ is of the form of the module in Equation~\eqref{E:V0aV0aV0a} with all $\bar{\alpha}_{j,n}\notin \XX$.  It follows that $W_1\otimes W_2$ is an object of $\cat$ since it is a retract of $V_1\otimes V_2$ with maps $p_1\otimes p_2$ and $q_1\otimes q_2$.  Therefore, $\cat$ is a tensor category.
Moreover, $\cat$ is $\C$-category since it is a full sub-category of the $\C$-category $\catd$.  

Finally, we will check that $\cat$ is closed under duality.  
Let  $W\in \cat $.  Then $W$ is a retract of some $V:=V(0,\tilde{\alpha}_1)\otimes ...\otimes V(0,\tilde{\alpha}_n)$ such that $ \bar{\alpha}_1,...,\bar{\alpha}_n\in (\C/\Z)\setminus \XX$.  Then $W^\ast $ is a retract of $V^\ast $ and we have that:
$$V^*\cong \left(V(0, \tilde{\alpha}_{1})\otimes ...\otimes  V(0, \tilde{\alpha}_{n})\right)^*
\cong V(0, \tilde{\alpha}_{n})^*\otimes ...\otimes  V(0, \tilde{\alpha}_{1})^*$$
$$\cong  V(0, -\tilde{\alpha}_{n}-\tilde1) \otimes ...\otimes V(0, -\tilde{\alpha}_{1}-\tilde1).$$
But $-\bar{\alpha}_{n}-\bar1,...,-\bar{\alpha}_{1}-\bar1\in (\C/\Z)\setminus \XX$ so we have $W^*\in \cat$.  Thus, since $\cat$ is a full subcategory of $\catd$ then the duality morphisms of $\catd$ give a pivotal structure in $\cat$.  Finally, the $\C/\Z$-grading on $\catd$ induces a  $\C/\Z$-grading on $\cat$.  
\end{proof}

 The Decomposition Lemma  \ref{L:DecompV0Vn} says we can decompose the tensor product $V(0,\tilde{\alpha})\otimes V(0,\tilde{\beta})$ into simple modules if $\bar{\alpha}+\bar{\beta}\notin \{\bar0, \bar{\frac{\rt}{2}}\}$.  Given a module as in Equation \eqref{E:V0aV0aV0a}, the following lemma says we can always find a pair $\tilde{\alpha}_i,\tilde{\alpha}_j$ with this property.  This fact is one of the motivations for the choice of the set $\XX$.  

\begin{lemma}\label{L:i,j}
For any $\tilde{\alpha}_1,...,\tilde{\alpha}_n\in \C/\Z$ such that $\bar{\alpha}_1,...,\bar{\alpha}_n \in(\C/ \Z)\setminus \XX$ and
$$\bar{\alpha}_1+...+\bar{\alpha}_n \notin \{ \bar{0}, \bar{\frac{\rt}{2}} \} $$  
then there exist $i,j\in \{1,...,n\}$ such that $i\not=j$ and $\bar{\alpha}_i+\bar{\alpha}_j\notin \{ \bar{0}, \bar{\frac{\rt}{2}} \}.$
\end{lemma}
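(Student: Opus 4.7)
I will argue by contradiction: suppose that $\bar{\alpha}_i+\bar{\alpha}_j\in\{\bar{0},\bar{\tfrac{\rt}{2}}\}$ for every pair of distinct indices $i\neq j$, and derive that some $\bar{\alpha}_k$ must lie in $\XX$, contradicting the hypothesis.

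First I would dispose of the small case $n=2$: the contradiction hypothesis then gives $\bar{\alpha}_1+\bar{\alpha}_2\in\{\bar{0},\bar{\tfrac{\rt}{2}}\}$, which is exactly the total sum and directly contradicts the assumption that $\bar{\alpha}_1+\cdots+\bar{\alpha}_n\notin\{\bar{0},\bar{\tfrac{\rt}{2}}\}$. So the interesting case is $n\geq 3$, which is what the Decomposition Lemma needs anyway.

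The main step for $n\geq 3$ is the following arithmetic observation in $\C/\Z$. Pick any three distinct indices $i,j,k$. By the contradiction hypothesis the three pairwise sums
\[
\bar{\alpha}_i+\bar{\alpha}_j,\quad \bar{\alpha}_i+\bar{\alpha}_k,\quad \bar{\alpha}_j+\bar{\alpha}_k
\]
all lie in the two-element subgroup $S:=\{\bar{0},\bar{\tfrac{\rt}{2}}\}$ of $\C/\Z$. Since $S$ is closed under subtraction, subtracting the first two gives $\bar{\alpha}_j-\bar{\alpha}_k\in S$, and adding this to the third yields $2\bar{\alpha}_j\in S$. I then solve the congruence $2\bar{x}\in S$ in $\C/\Z$: the solutions of $2\bar{x}=\bar{0}$ are $\{\bar{0},\bar{\tfrac{1}{2}}\}$, while the solutions of $2\bar{x}=\bar{\tfrac{\rt}{2}}$ are $\{\bar{0},\bar{\tfrac{1}{2}}\}$ when $\rt$ is even and $\{\bar{\tfrac{1}{4}},\bar{\tfrac{3}{4}}\}$ when $\rt$ is odd. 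In every case these solutions are contained in $\XX=(\tfrac{1}{4}\Z)/\Z$, so $\bar{\alpha}_j\in\XX$, contradicting the hypothesis $\bar{\alpha}_j\notin\XX$.

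No step is really hard here; the only delicate point is keeping track that $\{\bar{0},\bar{\tfrac{\rt}{2}}\}\subseteq\XX$ for both parities of $\rt$, which is exactly why the set $\XX$ was chosen to be $(\tfrac{1}{4}\Z)/\Z$ rather than $(\tfrac{1}{2}\Z)/\Z$. The statement is in effect the numerical fact that $\XX$ is the preimage of $\{\bar{0},\bar{\tfrac{\rt}{2}}\}$ under doubling, so the complement of $\XX$ is closed in a suitable sense under the operation of ``taking pairwise sums of a triple''.
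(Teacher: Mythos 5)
Your proof is correct and follows essentially the same strategy as the paper's: argue by contradiction, deduce that $2\bar{\alpha}_j$ lies in $\{\bar{0},\bar{\frac{\rt}{2}}\}$ for some $j$, and conclude $\bar{\alpha}_j\in\XX$. Your use of the closure of $\{\bar{0},\bar{\frac{\rt}{2}}\}$ under addition and subtraction to extract this from an arbitrary triple of indices is a clean shortcut past the paper's reordering and three-case analysis; the only imprecision is that for $\rt$ even this ``two-element subgroup'' degenerates to $\{\bar{0}\}$, which does not affect the argument.
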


\begin{proof}
If $n=2$, we have just two numbers and from the hypothesis they have the desired sum.

Let us consider the case $n\geq3$ and let suppose by contradiction that 
\begin{equation}\label{E:alphaij} \overline{\alpha_i+\alpha}_j \in \{ \bar{0}, \bar{\frac{\rt}{2}} \},
\end{equation} 
for all $ i,j\in \{1,...,n\} $ with $i\not=j$.  
Up to a reordering, we can suppose that there exists $m \in \{ 2,...,n \}$ such that:
\begin{itemize}
\item  $\overline{\alpha_1+\alpha_i} = \bar{0}, \forall i \in \{2,...,m\}$ 
\item  $\overline{\alpha_1+\alpha_j} = \bar{\frac{\rt}{2}}, \forall j \in \{m+1,...,n\}$. 
\end{itemize}
This implies the following:
\begin{itemize}
\item  $\bar{\alpha}_i = - \bar{\alpha}_1, \forall i \in \{2,...,m\}$ 
\item  $\bar{\alpha}_i = \bar{\frac{\rt}{2}} - \bar{\alpha}_1 , \forall j \in \{m+1,...,n\}$. 
\end{itemize}
Now we have three cases.  

\textbf{Case 1.} If $m \geq 3$, then 
$\bar{\alpha}_2 = \bar{\alpha}_3= - \bar{\alpha}_1 $ which implies 
$$\bar{\alpha}_2 + \bar{\alpha}_3= -2 \bar{\alpha}_1 \notin \{ \bar{0}, \bar{\frac{\rt}{2}} \},  \ \ \  \text{since}  \ \bar{\alpha}_1 \notin \{  \bar{\frac{1}{2}}, \bar{\frac{\rt}{4}} \}$$
which is a contradiction with our supposition.

\textbf{Case 2.} If $n-m \geq 2$, then
$\bar{\alpha}_{m+1} = \bar{\alpha}_{m+2}= \bar{\frac{\rt}{2}} - \bar{\alpha}_1$
which implies 
$$\bar{\alpha}_{m+1}+\bar{\alpha}_{m+2}= -2 \bar{\alpha}_1.$$ Here as above this leads to a contradiction.  

\textbf{Case 3.} If we are not in the first two cases and $n\not=2$ then it means  $n =3$ and $m=2$.  In this case we have
\begin{itemize}
\item  $\bar{\alpha}_2 = - \bar{\alpha}_1$ 
\item  $\bar{\alpha}_3 = \bar{\frac{\rt}{2}} - \bar{\alpha}_1$. 
\end{itemize}  
The relations above lead to:
$$\bar{\alpha}_2+\bar{\alpha}_3= \bar{\frac{\rt}{2}}-2 \bar{\alpha}_1.$$ 
But from the initial supposition, we have that $\bar{\alpha}_2+\bar{\alpha}_3 \in \{ \bar{0}, \bar{\frac{\rt}{2}} \} $.

If $\bar{\alpha}_2+\bar{\alpha}_3=\bar{0} $, it implies that $\bar{\frac{\rt}{2}}-2 \bar{\alpha}_1=\bar{0}$, so $\bar{\alpha}_1=\bar{\frac{\rt}{4}} $ which contradicts that $\bar{\alpha}_1 \notin \XX=\frac14\Z/\Z$.

If $\bar{\alpha}_2+\bar{\alpha}_3= \bar{\frac{\rt}{2}}$, then $\bar{\frac{\rt}{2}}-2 \bar{\alpha}_1=\bar{\frac{\rt}{2}}$, and it means $\bar{\alpha}_1\in \{\bar{0},\bar{\frac12} $ which is impossible since $\bar{\alpha}_1 \notin \XX$.

Thus all cases lead to contradictions and so the lemma follows.
\end{proof}

The next part is devoted to an argument that will lead to the fact that the tensor product of simple modules in the alcove is commutative.  The proof uses the braiding of the ``un-rolled'' quantum $\UqH$, studied by Ha in \cite{Ha}.   In his paper he works with odd ordered roots of unity but as we observe his proof also works for even roots of unity (at least for the existence of a braiding, it may not extend to the twist).  

Let $\UqHa=\UqH$ be the superalgebra generated by the elements $K_{i}, K_i^{-1}, h_i, E_{i}$ and $ F_{i}, $
  $i = 1,2$, subject to the relations in  \eqref{E:QserreA2} and 
  $$
  [h_i,E_j] = a_{ij}E_j, \;\; [h_i,F_j]=-a_{ij}F_j, \;\; [h_i,h_j]=0,\;\; [h_i,K_j] = 0
  $$ 
  for  $i,j = 1,2.$   All generators are even except $E_2$ and $F_2$ which are odd.    This superalgebra is a Hopf algebra where the coproduct, counit and anitpode of $K_{i}, K_i^{-1}, E_{i}$ and $ F_{i} $ are given in Subsection \ref{SS:UqDef} and 
  $$
   \Delta(h_i)=h_i\otimes 1 +1\otimes h_i, \;\;   \epsilon(h_{i})=0,\;\; S(h_i)=-h_i
  $$
  for $i,j = 1,2.$
 
 For a $\UqHa$-module $V$ let  $q^{h_i}:V\to V$ be the operator defined by $q^{h_i}(v)=q^{\lambda_i}v$ where $v$ is a weight vector with respect to $h_i$ of weight $\lambda_i$.  The superalgebra ideal $I$ generated by $E_1^{\rt'}$ and $F_1^{\rt'}$ is a Hopf algebra ideal (i.e. an ideal in the kernel of the counit, a coalgebra coideal and stable under the antipode).  Let $\catdH$ be the category of finite dimensional $\UqHa/I$-modules with even morphisms such that $q^{h_i}=K_i$ as operators for $i=1,2$.    Since $\UqHa/I$ is a Hopf superalgebra then $\catdH$ is a tensor category.    
 Moreover, the maps given in Equation \eqref{E:DualityForCat}  define a pivotal structure on $\catdH$.  There is a forgetful functor from $\catdH$ to $\catd$ which forgets the action of $h_1$ and $h_2$.
Given two objects $V,W$ of $\catdH$ let $\mathcal{K}:V\otimes W\to V\otimes W$ be the operator defined by
$$\mathcal{K}(v\otimes w)=
q^{-\lambda_1\mu_2- \lambda_2\mu_1-2 \lambda_2\mu_2}v\otimes w
$$
where $h_iv=\lambda_iv$ and $h_iw=\mu_iw$ for $i=1,2$.    Consider the  truncated $R$-matrix:
\begin{equation}\label{E:TruR}
\check{R}=\sum_{k=0}^{\rt'-1} \frac{ \{1\}^k}{(k)_q!} E^k_1\otimes  F^k_1 \sum_{s=0}^{1} \frac{ (- \{ 1 \})^s}{(s)_q!} E_3^s \otimes F_3^s \sum_{t=0}^{1}\frac{(-\{ 1 \})^t}{(t)_q!} E^t_2 \otimes F^t_2 
\end{equation}
where $E_3=E_1E_2-q^{-1}E_2E_1, F_3=F_2F_1-qF_1F_2$, $(n)_q=\frac{1-q^n}{1-q}$ and $(n)_q!=(1)_q\cdot ...\cdot (n)_q$.
\begin{theorem}\label{T:braiding}
The family $\{c_{V,W}:V\otimes W\to W\otimes V\}_{V,W\in \catdH}$ defined by 
$$c_{V,W}(v\otimes w)=\tau(\check{R}\mathcal{K}(v\otimes w))$$ is a braiding on $\catdH$ where $\tau$ is the super flip map $\tau(v\otimes w)=(-1)^{\p w \p v}w\otimes v$.  
\end{theorem}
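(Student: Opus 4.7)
The plan is to adapt Ha's argument from \cite{Ha}, which establishes the braiding for odd $\rt$, and verify that nothing in his proof depends on the parity of $\rt$; the only role $\rt$ plays is to fix the truncation parameter $\rt'$.

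First I would check that $c_{V,W}$ is a well-defined $\C$-linear map $V\otimes W\to W\otimes V$ for all $V,W\in \catdH$. The operator $\mathcal{K}$ makes sense on joint $(h_1,h_2)$-weight vectors (which span $V\otimes W$ since objects are required to be finite dimensional and satisfy $q^{h_i}=K_i$), and extends linearly. The sum defining $\check R$ in \eqref{E:TruR} is finite for three reasons: the $k$-sum truncates at $\rt'-1$ because $E_1^{\rt'}=F_1^{\rt'}=0$ in $\UqHa/I$; the $s$-sum and $t$-sum truncate at $1$ because the odd generators $E_2,F_2$ (and hence $E_3,F_3$) square to zero in any super-module via the super-commutation convention inherent in the action.

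Next I would verify naturality of the family $\{c_{V,W}\}$. Morphisms in $\catdH$ are even and commute with the action of all of $\UqHa$, in particular preserving the joint $(h_1,h_2)$-grading; thus they commute with $\mathcal{K}$ and with each tensor factor appearing in $\check R$. Combined with naturality of the super-flip $\tau$, this yields the usual commutative square $(g\otimes f)\circ c_{V,W}=c_{V',W'}\circ(f\otimes g)$ for morphisms $f\colon V\to V'$ and $g\colon W\to W'$.

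Third, I would verify the two hexagon identities. Writing $R=\check R\mathcal{K}$ as an operator on triple tensor products, the hexagon identities are equivalent to the quasitriangularity relations
\begin{equation*}
(\Delta\otimes\operatorname{id})R=R_{13}R_{23},\qquad(\operatorname{id}\otimes\Delta)R=R_{13}R_{12},
\end{equation*}
together with the intertwining of $\Delta$ with the opposite coproduct by $R$. These are precisely the identities that Ha verifies by direct computation using the defining relations of $\UqHa$, the quantum Serre relations \eqref{E:QserreA2}, and the commutation rules between $h_i$ and $E_j,F_j$. None of these relations involve the parity of $\rt$: the truncation parameter $\rt'$ enters only through the ideal $I$, and the computation only uses that the truncated sums behave like the (non-existent) universal $R$-matrix modulo terms that vanish in the quotient.

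The main obstacle is the bookkeeping in the hexagon verification: one must carefully propagate the $\Z_2$-signs from the odd generators $E_2,F_2,E_3,F_3$ and from $\tau$, and one must confirm that the weight factor $q^{-\lambda_1\mu_2-\lambda_2\mu_1-2\lambda_2\mu_2}$ in $\mathcal{K}$ produces exactly the correction needed for $R$ to intertwine $\Delta$ with $\Delta^{\mathrm{op}}$. Since this bookkeeping has already been performed in \cite{Ha}, and since the parity of $\rt$ plays no role beyond determining the cutoff $\rt'$, the same argument yields the theorem in the even case as well. A remark to this effect would suffice as the proof.
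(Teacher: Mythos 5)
Your proposal is correct and takes essentially the same approach as the paper: both reduce the claim to Ha's Theorem 3.6, observing that the parity of $\rt$ enters only through the truncation parameter $\rt'$ (the paper additionally notes the precise modification needed in Ha's construction of the algebra $\mathcal{U}^{<}$ and the projection $p$, namely taking powers of $E_1,F_1$ up to $\rt'-1$ rather than $\rt-1$, so that the truncated $R$-matrix has no pole). No substantive difference.
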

\begin{proof}
The proof is essentially given by Ha in \cite{Ha}.  As mentioned above, in Theorem~3.6 of \cite{Ha}, Ha proves the theorem for odd ordered roots of unity.  Ha's proof works for even ordered roots of unity as well.  In particular, before Proposition 3.5 of \cite{Ha} Ha uses the PBW basis of $\Uq$ to define an algebra $\mathcal{U}^{<}$.  In our case, when defining this algebra one should take powers of $E_1$ and $F_1$ from 0 to $\rt'-1$ not $\rt-1$.  Then use this algebra to define the projection $p:\Uq\to \mathcal{U}^{<}$ and the element
$$
\mathcal{R}^{<}=p\otimes p(\mathcal{R}_q)
$$  where $ \mathcal{R}_q$ is the $R$-matrix defined in \cite{KTol,Yam94} for $\Uq$ when $q$ is generic.  With these modifications the proofs of Proposition 3.5 and Theorem 3.6 in \cite{Ha} holds word for word for both the even and odd case.  Note that at the end of the proof of Theorem 3.6 in \cite{Ha} Ha says, ``The element $\mathcal{R}^{<}$  has no pole when $q$ is a root of unity of the order $\rt$.''  This is true in our case because we defined $\mathcal{R}^{<}$ using $p$ which only allows  powers of $E_1$ or $F_1$ smaller than $\rt'-1$ which is analogous to the definition of  $\check{R}$ above.  
\end{proof}

For $(n,\alpha)\in\N\times \C$ with $0\leq n\leq \rt'-1$ and $\bar \alpha\notin \{\bar0, \bar{\frac{\rt}{2}}\}$,  one can check directly that there is a $\UqH$-module $V^H(n,\alpha)$ with basis $\{w^{\alpha}_{\rho, \sigma ,p}| p\in \{ 0,...,n \}; \rho ,\sigma \in \{ 0,1\}\}$ whose action is given by   
\begin{align}\label{E:ActionVntildea_h}
h_1\cdot w^{\alpha}_{\rho ,\sigma ,p}&= (\rho -\sigma +n-2p) w^{\alpha}_{\rho ,\sigma ,p},  &
h_2\cdot w^{\alpha}_{\rho ,\sigma ,p}&= (\alpha +\sigma+p) w^{\alpha}_{\rho ,\sigma ,p},
\end{align}
and Equations \eqref{E:ActionVntildea_K}, \eqref{E:ActionVntildea_F}, \eqref{E:ActionVntildea_E1}, and \eqref{E:ActionVntildea_E2} with $\tilde{\alpha}$ replaced with $\alpha$. 
Moreover, by definition the operators $q^{h_i}=K_i$ are equal on $V^H(n,\alpha)$.

\begin{lemma}\label{L:El}
For $n\in \{ 0,...,\rt'-1 \}$ and $\alpha\in \C$ with $\bar \alpha\notin \{\bar0, \bar{\frac{\rt}{2}}\}$, then  
 the actions of $E^{\rt'}_1$ and $F^{\rt'}_1$ are zero on $ V(n,\tilde{\alpha})$  and $V^H(n,\alpha)$.
\end{lemma}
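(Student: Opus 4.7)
The central observation is that $[\rt'] = 0$: since $q^{\rt'} = q^{\rt} = 1$ when $\rt$ is odd and $q^{\rt'} = q^{\rt/2} = -1$ when $\rt$ is even, in both cases $q^{\rt'} - q^{-\rt'} = 0$. The actions of $E_1$ and $F_1$ on $V(n, \tilde{\alpha})$ and $V^H(n, \alpha)$ are given by identical formulas (they do not involve $h_i$ or $K_i$), so it suffices to argue on $V(n, \tilde{\alpha})$.

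For $F_1^{\rt'}$ I would argue by direct induction on the basis of Theorem~\ref{T:Structure}. On each of the three strings $W_{\rho, \sigma} = \text{span}\{w_{\rho, \sigma, p}\}_p$ with $(\rho, \sigma) \neq (1, 0)$, the second term of $F_1 \cdot w_{\rho, \sigma, p}$ vanishes (because $\rho(1 - \sigma) = 0$), so $F_1^k w_{\rho, \sigma, p}$ is a scalar multiple of $w_{\rho, \sigma, p + k}$; this is zero at $k = \rt'$ since $p + \rt' > n$. On the remaining string $W_{1, 0}$ one checks by induction that $F_1^k w_{1, 0, p} = q^{-k} w_{1, 0, p+k} + \lambda_k w_{0, 1, p+k-1}$, where $\lambda_k$ is a scalar multiple of $[k]$; at $k = \rt'$ the first summand vanishes because $p + \rt' > n$ and the second because $[\rt'] = 0$. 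So $F_1^{\rt'} = 0$ on $V(n, \tilde{\alpha})$.

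For $E_1^{\rt'}$ a purely basis-level calculation becomes delicate at the corner $n = p = \rt' - 1$: the dual computation on $W_{0, 1}$ leaves a term $\alpha_{\rt'} w_{1, 0, 0}$ whose coefficient is a non-obvious identity among powers of $q$. I would instead argue by centralization. The subalgebra $\langle E_1, F_1, K_1^{\pm 1} \rangle$ of $\Uq$ is an ordinary (non-super) copy of $U_q(\sll_2)$, and the standard quantum Leibniz identity
\[
[E_1^{\rt'}, F_1] = [\rt'] \cdot E_1^{\rt' - 1} \cdot \frac{q^{\rt' - 1} K_1 - q^{-(\rt'-1)} K_1^{-1}}{q - q^{-1}}
\]
vanishes because $[\rt'] = 0$. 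Together with the relation $[E_1, F_2] = 0$ (a special case of $[E_i, F_j] = \delta_{ij}(K_i - K_i^{-1})/(q - q^{-1})$), this gives that $E_1^{\rt'}$ commutes with both $F_1$ and $F_2$ in $\Uq$. From the action formulas, $E_1 w_{0, 0, 0} = E_2 w_{0, 0, 0} = 0$, so in particular $E_1^{\rt'} w_{0, 0, 0} = 0$. By the PBW triangular decomposition $\Uq = U^- U^0 U^+$ (with $U^\pm$ generated by the $F_i$ resp.\ $E_i$ and $U^0$ by the $K_j^{\pm 1}$), every element of $\Uq$ is a sum of monomials (word in the $F_i$) $\cdot$ (word in the $K_j^{\pm 1}$) $\cdot$ (word in the $E_k$); applied to $w_{0, 0, 0}$ only monomials with trivial $E$-part contribute, so since $V(n, \tilde{\alpha})$ is by construction the highest weight module generated by $w_{0, 0, 0}$, every vector of $V(n, \tilde{\alpha})$ is a linear combination of elements of the form $F_{i_1} \cdots F_{i_m} w_{0, 0, 0}$. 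Commuting $E_1^{\rt'}$ past these $F$'s and using $E_1^{\rt'} w_{0, 0, 0} = 0$ yields $E_1^{\rt'} = 0$ on $V(n, \tilde{\alpha})$. The same argument applies verbatim to $V^H(n, \alpha)$.

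The main obstacle is precisely the corner case $n = p = \rt' - 1$ in the direct computation for $E_1^{\rt'}$, which is what forces the use of the centralization argument rather than a purely elementary induction on basis vectors. Everything else is a direct verification from Theorem~\ref{T:Structure} together with the identity $[\rt'] = 0$.
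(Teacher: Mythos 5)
Your proof is correct. The $F_1^{\rt'}$ half coincides with the paper's argument: the same case split on the string $(\rho,\sigma)=(1,0)$ versus the others, and the same observation that the surviving coefficient of $w_{0,1,p+\rt'-1}$ is a unit times the geometric sum $\sum_{i=0}^{\rt'-1}q^{-2i}=0$, i.e.\ a unit times $[\rt']$. For $E_1^{\rt'}$ you diverge from the paper, which simply declares that case ``similar and left to the reader'': the intended dual computation runs on the basis exactly as for $F_1$, with the only nontrivial corner (the string $(\rho,\sigma)=(0,1)$ when $p=n=\rt'-1$) killed by the very same vanishing sum $\sum_{i=0}^{\rt'-1}q^{-2i}=0$, so the coefficient you call ``non-obvious'' is in fact the identity you already used; the direct route is no harder than the $F_1$ case. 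Your alternative via centralization is nevertheless valid and arguably cleaner: $[E_1^{\rt'},F_1]$ carries the factor $[\rt']=0$ by the standard $U_q(\sll_2)$ commutation identity applied to the even subalgebra $\langle E_1,F_1,K_1^{\pm1}\rangle$, the relation $[E_1,F_2]=0$ is one of the defining relations, $E_1^{\rt'}w_{0,0,0}=0$ is immediate, and $V(n,\tilde{\alpha})$ is spanned by $F$-monomials applied to $w_{0,0,0}$ (it is a highest weight module, indeed simple under the hypothesis on $\bar{\alpha}$, and one can also see the spanning directly from the formulas of Theorem \ref{T:Structure}). What this buys is independence from the explicit $E_1$-action formulas; what it costs is the appeal to PBW/highest-weight generation, which the basis-level computation avoids. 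Both halves transfer verbatim to $V^H(n,\alpha)$, as you note, since the $E_1$ and $F_1$ action formulas there are identical.
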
 

\begin{proof}
We will prove the theorem for $V(n,\tilde{\alpha})$ the proof for $V^H(n,\alpha)$ is essential identical.  
Let us prove the action of $F^{\rt'}_1$ is zero on $V(n,\tilde{\alpha})$ the proof that $E^{\rt'}_1$ act as zero is similar and left to the reader.  

It is enough to prove that $F^{\rt'}_1w_{\rho, \sigma, p}=0$ where $w_{\rho, \sigma, p}$ is any of the basis vectors given in Theorem \ref{T:Structure}.   Equation \eqref{E:ActionVntildea_F} gives the action of $F_1$ on  $V(n,\tilde{\alpha})$.  In particular, if $\rho\neq 1$  and $\sigma \neq 0$ then $F_1w_{\rho, \sigma, p}= q^{\sigma-\rho} w_{\rho ,\sigma ,p+1}$.  Therefore, in this case, 
$$F_1^{\rt'}w_{\rho, \sigma, p}= q^{\rt'(\sigma-\rho)} w_{\rho ,\sigma ,p+\rt'}=0$$
since $w_{\rho ,\sigma ,i}=0$ if $i\geq \rt'$.  

Now a direct calculation implies:
$$
F_1^k w_{1 ,0 ,p} =q^{-k} w_{1 ,0 ,p+k} - q^{k-2}\left(\sum_{i=0}^{k-1}q^{-2i}\right)w_{0,1,p+k-1}.
$$
When $k=\rt'$ we see each of these terms is zero, since $w_{\rho ,\sigma ,i}=0$ if $i\geq \rt'$ and $\sum_{i=0}^{\rt'-1}q^{-2i}=\frac{1-q^{-2\rt'}}{1-q^{-2}}=0$.

\end{proof}

\begin{corollary}\label{C:VincatdH}
For $n\in \{ 0,...,\rt'-1 \}$ and $\alpha\in \C$ with  $\bar \alpha\notin \{\bar0, \bar{\frac{\rt}{2}}\}$ 
then  the  $\UqH$-module  $V^H(n,\alpha)$ is an object in $\catdH$.
\end{corollary}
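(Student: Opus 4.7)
The plan is to verify directly that $V^H(n,\alpha)$ satisfies all the defining conditions for being an object of $\catdH$. Recall that $\catdH$ consists of finite dimensional $\UqHa/I$-modules whose morphisms are even and on which the operator identities $q^{h_i}=K_i$ hold for $i=1,2$. Since $V^H(n,\alpha)$ has been constructed with a finite basis $\{w^{\alpha}_{\rho,\sigma,p}\}$ of cardinality $4(n+1)$, finite dimensionality is immediate, so the real content is showing it descends from $\UqHa$ to $\UqHa/I$ and that $q^{h_i}$ and $K_i$ agree as operators.

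First I would verify $q^{h_i}=K_i$ on $V^H(n,\alpha)$. Comparing the $h_i$-action in \eqref{E:ActionVntildea_h} with the $K_i$-action from Theorem \ref{T:Structure} (with $\tilde{\alpha}$ replaced by $\alpha$), the basis vector $w^{\alpha}_{\rho,\sigma,p}$ is a simultaneous $h_i$-eigenvector, and exponentiating the $h_1$-eigenvalue $\rho-\sigma+n-2p$ and the $h_2$-eigenvalue $\alpha+\sigma+p$ gives precisely the $K_1$- and $K_2$-eigenvalues prescribed by \eqref{E:ActionVntildea_K}. This matches the remark made immediately after the construction of $V^H(n,\alpha)$.

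Second, I would show that $I$ acts trivially, i.e.\ that the generators $E_1^{\rt'}$ and $F_1^{\rt'}$ of the ideal act as zero. This is exactly the content of Lemma \ref{L:El}: the same basis and the same formulas \eqref{E:ActionVntildea_F}, \eqref{E:ActionVntildea_E1} govern $V(n,\tilde{\alpha})$ and $V^H(n,\alpha)$, so the nilpotency argument given there applies verbatim. Hence $V^H(n,\alpha)$ factors through the quotient $\UqHa/I$ and defines an honest $\UqHa/I$-module.

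There is really no main obstacle here; the corollary is essentially a bookkeeping consequence of Lemma \ref{L:El} together with the way $V^H(n,\alpha)$ was defined so that the $h_i$- and $K_i$-actions are compatible. The only thing worth flagging is that one should make sure the weight restriction $\bar\alpha\notin\{\bar 0,\bar{\frac\rt2}\}$ is inherited: this condition was used when defining $V^H(n,\alpha)$ to guarantee the action formulas make sense (in particular that the coefficients $[\tilde\alpha+p+\sigma]$ appearing in \eqref{E:ActionVntildea_E2} are unambiguously defined and the module is simple of the expected dimension), and it plays no further role in checking membership in $\catdH$.
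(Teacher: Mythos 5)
Your proposal is correct and matches the paper's (implicit) argument exactly: the corollary is stated as an immediate consequence of Lemma \ref{L:El} (so that the ideal $I$ acts by zero and the module descends to $\UqHa/I$) together with the observation, made right after the definition of $V^H(n,\alpha)$, that $q^{h_i}=K_i$ holds by construction. Nothing is missing.
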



\begin{lemma}(Commutativity Lemma)\label{comm} 
Let  $n,n'\in\N$ such that $0\leq n,n'\leq \rt'-1$ and 
$\tilde{\alpha},\tilde{\alpha}'\in \C/\rt\Z$ such that 
$\bar \alpha, \bar{\alpha}'\notin \{\bar0, \bar{\frac{\rt}{2}}\}$.  
 Let $\{w_{\rho, \sigma ,p}\}$ and $\{w'_{\rho', \sigma' ,p'}\}$ be the basis given in Theorem~\ref{T:Structure} for $V(n,\tilde{\alpha})$ and $V(n',\tilde{\alpha}')$, respectively.  Choose $\alpha,\alpha'\in \C$ such that $[\alpha]=\tilde{\alpha}$ and $[\alpha']=\tilde{\alpha}'$ in $ \C/\rt\Z$.   Then there exists an isomorphism
 $$\psi_{\alpha,\alpha'}:V(n,\tilde{\alpha})\otimes V(n',\tilde{\alpha}')\to  V(n',\tilde{\alpha}')\otimes V(n,\tilde{\alpha})$$
such that 
$$\psi_{\alpha,\alpha'}(w_{0,0,0}\otimes w'_{\rho', \sigma' ,p'})=
q^{-n(\alpha'+\sigma'+p'
)-\alpha(\rho'-\sigma' +n'-2p')-2\alpha(\alpha'+\sigma'+p')} w'_{\rho', \sigma' ,p'}\otimes w_{0,0,0}$$
and 
\begin{equation}\label{E:psiww000}
\psi_{\alpha,\alpha'}(w_{\rho, \sigma ,p}\otimes w'_{0,0,0})=
q^{-(\rho -\sigma +n-2p)\alpha' -(\alpha +\sigma+p)n' -2\alpha'(\alpha +\sigma+p) } w'_{0,0,0}\otimes w_{\rho, \sigma ,p}  +\sum_i c_ix_i\otimes y_i.
\end{equation}
where each $x_i$ is a basis element in $\{w'_{\rho', \sigma' ,p'}\}$ not equal to $w'_{0,0,0}$.   
\end{lemma}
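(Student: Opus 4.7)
The plan is to realize $\psi_{\alpha,\alpha'}$ as the braiding isomorphism of $\catdH$ applied to the $\UqH$-lifts $V^H(n,\alpha)$ and $V^H(n',\alpha')$, and then forget the $h_i$-action to obtain a morphism in $\catd$. By Corollary~\ref{C:VincatdH} both $V^H(n,\alpha)$ and $V^H(n',\alpha')$ lie in $\catdH$, and after forgetting the $h_i$-action they become the $\Uq$-modules $V(n,\tilde\alpha)$ and $V(n',\tilde\alpha')$ respectively (identifying $w^{\alpha}_{\rho,\sigma,p}$ with $w_{\rho,\sigma,p}$). Theorem~\ref{T:braiding} then yields a $\UqH$-linear isomorphism
$$c=\tau\circ\check R\circ\mathcal K:V^H(n,\alpha)\otimes V^H(n',\alpha')\to V^H(n',\alpha')\otimes V^H(n,\alpha),$$
which, viewed simply as a $\Uq$-linear map, is the desired $\psi_{\alpha,\alpha'}$.

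To verify the first formula, I would apply $\psi_{\alpha,\alpha'}$ to $w_{0,0,0}\otimes w'_{\rho',\sigma',p'}$. Since $w_{0,0,0}$ is the highest weight vector of $V^H(n,\alpha)$ we have $E_1 w_{0,0,0}=E_2 w_{0,0,0}=0$, and consequently $E_3 w_{0,0,0}=(E_1E_2-q^{-1}E_2E_1)w_{0,0,0}=0$. Examining \eqref{E:TruR}, every summand of $\check R$ with $(k,s,t)\neq(0,0,0)$ contains a positive power of $E_1$, $E_3$, or $E_2$ on the left tensor factor and therefore annihilates this vector, so $\check R$ acts as the identity. Next, $\mathcal K$ contributes the scalar $q^{-\lambda_1\mu_2-\lambda_2\mu_1-2\lambda_2\mu_2}$ with $(\lambda_1,\lambda_2)=(n,\alpha)$ and $(\mu_1,\mu_2)=(\rho'-\sigma'+n'-2p',\,\alpha'+\sigma'+p')$ as read off from \eqref{E:ActionVntildea_h}, which is exactly the coefficient claimed. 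Finally $\tau$ introduces no super-sign because $\p{w_{0,0,0}}=0$.

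For the second formula I would split $\check R$ into its $(0,0,0)$ summand and the rest. The $(0,0,0)$ summand acts as the identity; composed with $\mathcal K$ (with $(\lambda_1,\lambda_2)=(\rho-\sigma+n-2p,\,\alpha+\sigma+p)$ and $(\mu_1,\mu_2)=(n',\alpha')$) and $\tau$ (no sign since $\p{w'_{0,0,0}}=0$), it produces the explicit leading term of \eqref{E:psiww000}. For any $(k,s,t)\neq(0,0,0)$, the second tensor factor after $\check R$ is a scalar multiple of $F_1^k F_3^s F_2^t(w'_{0,0,0})$. Using $[h_j,F_i]=-a_{ji}F_i$ and the definition $F_3=F_2F_1-qF_1F_2$, one checks that $F_1$, $F_3$, $F_2$ shift $(h_1,h_2)$-weights by $(-2,1)$, $(-1,1)$, $(1,0)$ respectively, so $F_1^k F_3^s F_2^t$ shifts the weight of $w'_{0,0,0}$ by $(-2k-s+t,\,k+s)$, which vanishes only for $(k,s,t)=(0,0,0)$. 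Hence $F_1^k F_3^s F_2^t(w'_{0,0,0})$ has no $w'_{0,0,0}$ component in its weight-basis expansion from Theorem~\ref{T:Structure}, and after applying $\tau$ and absorbing all coefficients the non-leading contributions assemble into the required sum $\sum_i c_ix_i\otimes y_i$ with each $x_i\neq w'_{0,0,0}$.

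The main technical nuisance is the super-sign bookkeeping arising from the super-tensor product when expanding $\tau\circ\check R\circ\mathcal K$; but these signs never touch the $(0,0,0)$-summand (which involves only the even vectors $w_{0,0,0}$ or $w'_{0,0,0}$ and parity-preserving operators), and for the remaining summands they merely rescale terms that already have no $w'_{0,0,0}$ component. I therefore expect no genuine obstacle beyond the weight arithmetic above, which is immediate from \eqref{E:ActionVntildea_h} and the commutation relations of the $h_i$ with the $F_j$.
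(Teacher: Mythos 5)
Your proposal is correct and follows essentially the same route as the paper: realize $\psi_{\alpha,\alpha'}$ as the braiding of Theorem~\ref{T:braiding} on the lifts $V^H(n,\alpha)$, $V^H(n',\alpha')$ and push it through the forgetful functor, with $\check R$ acting trivially against the highest weight vector and the non-identity summands of $\check R$ contributing only terms with $x_i\neq w'_{0,0,0}$. The only cosmetic difference is that you justify this last point by an $(h_1,h_2)$-weight-shift computation, whereas the paper observes directly from Equation~\eqref{E:ActionVntildea_F} that $F_1$ and $F_2$ strictly increase the indices of the basis vectors; both arguments are valid.
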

\begin{proof}
Recall the forgetful functor from $\catdH$ to $\catd$.  Lemma \ref{L:El} and Corollary \ref{C:VincatdH} imply that $V^H(n,\alpha)$ maps to  $ V(n,\tilde{\alpha})$ under this functor.     Similarly, $V^H(n',\alpha')$ maps to  $ V(n',\tilde{\alpha}')$.
Now the braiding $c_{ V^H(n,\alpha),V^H(n',\alpha')}$ of  Theorem \ref{T:braiding} 
under the forgetful functor gives the desired isomorphism $\psi_{\alpha,\alpha'}$ in $\catd$.  

We have 
$$\psi_{\alpha,\alpha'}(w_{\rho, \sigma ,p}\otimes w'_{\rho', \sigma' ,p'})=\tau(\check{R}\mathcal{K}(w_{\rho, \sigma ,p}^\alpha\otimes {w'}_{\rho', \sigma' ,p'}^{\alpha'}))$$
where 
$$\mathcal{K}(w_{0,0,0}^\alpha\otimes {w'}_{\rho', \sigma' ,p'}^{\alpha'})=q^{-n_1(\alpha'+\sigma'+p'
)-\alpha(\rho'-\sigma' +n'-2p')-2\alpha(\alpha'+\sigma'+p')}w_{0,0,0}^\alpha\otimes {w'}_{\rho', \sigma' ,p'}^{\alpha'} $$
 Since $E_1w_{0,0,0}^\alpha=E_2w_{0,0,0}^\alpha=0$ it follows that  $\check{R}(w_{0,0,0}^\alpha\otimes {w'}_{\rho', \sigma' ,p'}^{\alpha'})=w_{0,0,0}^\alpha\otimes {w'}_{\rho', \sigma' ,p'}^{\alpha'}$ and the first formula in the lemma holds.   
 
 To prove the second formula, recall from Equation \eqref{E:TruR} that 
 $$\check{R}=1\otimes 1 + \sum_i d_i a_i \otimes b_i$$
 where each $b_i$ is of the form $F_1^kF_3^sF_2^t$ where at least one of the indices $k,s$ or $t$ is non-zero.  Therefore, from the defining relations of Theorem \ref{T:Structure} we have $b_i{w'}^{\alpha'}_{0,0,0}$ is a linear combination of basis vectors $w_{\rho', \sigma' ,p'}^{\alpha'}$ where $\rho', \sigma' ,p'$ are not all zero (since the action of either $F_1$ or $F_2$ on any basis vector increase at least one of the indices of the vector, see Equation \eqref{E:ActionVntildea_F}).    
Combining the above we have 
 \begin{align*} 
 \check{R}(w_{\rho, \sigma ,p}^{\alpha}\otimes {w'}_{0,0,0}^{\alpha'})&=w_{\rho, \sigma ,p}^{\alpha}\otimes {w'}^{\alpha'}_{0,0,0}+ \sum_i d_i( a_i \otimes b_i )(w_{\rho, \sigma ,p}^{\alpha}\otimes {w'}^{\alpha'}_{0,0,0})\\
 &=w_{\rho, \sigma ,p}^{\alpha}\otimes {w'}_{0,0,0}^{\alpha'}+ \sum_j d_j' y_j\otimes x_j
 \end{align*}
where each $x_i$ is a basis element in $\{{w'}^{\alpha'}_{\rho', \sigma' ,p'}\}$ not equal to ${w'}^{\alpha'}_{0,0,0}$.   Thus, since $\mathcal{K}$ acts diagonally on the basis, we just need to compute $\mathcal{K}(w_{\rho, \sigma ,p}^{\alpha}\otimes {w'}^{\alpha'}_{0,0,0})$.  This can be done as above to obtain Equation \eqref{E:psiww000}.   
\end{proof}
\begin{remark}
Clearly, the isomorphism $\psi_{\alpha,\alpha'}$ in Lemma \ref{comm} depends on the choice of $\alpha $ and $ \alpha'$.
\end{remark}

\begin{lemma}\label{epsilon}
Consider $\tilde{\alpha}_1,..., \tilde{\alpha}_n\in \C/{\rt}\Z$ with $ \bar{\alpha}_i \not\in \XX, i\in \{1,...,n\}$.
From the Lemma \ref{L:i,j}, there exists $ i,j$ such that
$ \bar{\alpha}_i+\bar{\alpha}_j\notin\{\bar 0, \bar{\frac{\rt}{2}}\}.$
Then, for any $\epsilon \in \C/\rt\Z$ such that: 
 $\bar{\alpha}_j-\bar{\epsilon}\notin\{\bar 0, \bar{\frac{\rt}{2}}\}$
and  
$\bar{\alpha}_{i}+\bar{\epsilon}\notin\{\bar 0, \bar{\frac{\rt}{2}}\} $
we can modify the weights without changing the tensor product in the following way:\\
$$V(0,\tilde{\alpha}_1)\otimes...\otimes V(0,\tilde{\alpha}_i)\otimes ...\otimes V(0,\tilde{\alpha}_j)\otimes  ...\otimes V(0,\tilde{\alpha}_{n})\simeq$$
$$V(0,\tilde{\alpha}_1)\otimes...\otimes V(0,\tilde{\alpha}_i+\epsilon )\otimes ...\otimes V(0,\tilde{\alpha}_j-\epsilon )\otimes  ...\otimes V(0,\tilde{\alpha}_{n}).$$
\end{lemma}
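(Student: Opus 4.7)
The plan is to combine the Commutativity Lemma \ref{comm} with Corollary \ref{CorDecomp}. First, I would use the braiding isomorphisms of Lemma \ref{comm} to permute the factors so that $V(0,\tilde{\alpha}_i)$ and $V(0,\tilde{\alpha}_j)$ become adjacent in the tensor product; then apply Corollary \ref{CorDecomp} (with its second index $n=0$) to this adjacent pair in order to shift the weights by $\epsilon$ and $-\epsilon$; finally use the Commutativity Lemma once more to move each factor back to its original position.

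Concretely, since $\XX = (\tfrac14\Z)/\Z \supset \{\bar 0, \bar{\rt/2}\}$ and each $\bar{\alpha}_k$ lies outside $\XX$, the weight hypothesis of Lemma \ref{comm} is satisfied for every adjacent pair that appears. Iterating the braiding isomorphisms (tensored with the identity on the untouched factors) produces an isomorphism
$$V(0,\tilde{\alpha}_1)\otimes\cdots\otimes V(0,\tilde{\alpha}_n) \;\simeq\; V(0,\tilde{\alpha}_i)\otimes V(0,\tilde{\alpha}_j)\otimes W,$$
where $W$ denotes the ordered tensor product of the remaining factors. By the choice of $i,j$ via Lemma \ref{L:i,j} we have $\bar{\alpha}_i+\bar{\alpha}_j\notin\{\bar 0, \bar{\rt/2}\}$, and the hypotheses of Lemma \ref{epsilon} on $\epsilon$ give $\overline{\alpha_i+\epsilon},\overline{\alpha_j-\epsilon}\notin\{\bar 0, \bar{\rt/2}\}$. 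Thus Corollary \ref{CorDecomp} applies to the pair and yields
$$V(0,\tilde{\alpha}_i)\otimes V(0,\tilde{\alpha}_j)\;\simeq\; V(0,\tilde{\alpha}_i+\epsilon)\otimes V(0,\tilde{\alpha}_j-\epsilon).$$
Tensoring this isomorphism with $\Id_W$ upgrades it to an isomorphism of the corresponding rearranged $n$-fold tensor products. A second round of adjacent braidings---again legitimate because the new weights $\bar{\alpha}_i+\bar{\epsilon}$ and $\bar{\alpha}_j-\bar{\epsilon}$ satisfy the hypotheses of Lemma \ref{comm}---transports the two modified factors back to slots $i$ and $j$, giving the claimed isomorphism.

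The only point requiring any care is verifying, at each elementary swap, that the two factors being braided satisfy the mild weight hypothesis of Lemma \ref{comm}; but this is immediate from the data, since the original $\bar{\alpha}_k$ all lie outside $\XX\supset\{\bar 0, \bar{\rt/2}\}$ and the two shifted weights do so by assumption. Consequently there is no genuine obstacle: the lemma follows from a straightforward bookkeeping argument built on top of the two earlier structural results (Lemma \ref{comm} and Corollary \ref{CorDecomp}).
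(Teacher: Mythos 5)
Your proposal is correct and is essentially the paper's own argument: the paper likewise uses the Commutativity Lemma \ref{comm} to braid the $i$-th and $j$-th factors into adjacent positions (at the end of the product rather than the front, an immaterial difference), applies Corollary \ref{CorDecomp} to that pair, and braids back. Your verification that $\XX=(\tfrac14\Z)/\Z\supseteq\{\bar 0,\bar{\rt/2}\}$, so that every elementary swap satisfies the weight hypothesis of Lemma \ref{comm}, is a point the paper leaves implicit, and it is checked correctly.
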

\begin{proof}
From the choice of $\epsilon $, Lemma \ref{CorDecomp} implies 
$$V(0,\tilde{\alpha}_{i})\otimes V(0,\tilde{\alpha}_{j})\simeq V(0,\tilde{\alpha}_{i}+\epsilon)\otimes V(0,\tilde{\alpha}_{j}-\epsilon).$$  
Combining this isomorphism with Lemma \ref{comm} we have the following isomorphisms:  
$$V(0,\tilde{\alpha}_1)\otimes...\otimes V(0,\tilde{\alpha}_i)\otimes ...\otimes V(0,\tilde{\alpha}_j)\otimes  ...\otimes V(0,\tilde{\alpha}_{n})$$ 
$$\simeq V(0,\tilde{\alpha}_1)\otimes...\otimes \hat{V}(0,\tilde{\alpha}_i)\otimes ...\otimes \hat{V}(0,\tilde{\alpha}_j)\otimes  ...\otimes V(0,\tilde{\alpha}_{n})\otimes V(0,\tilde{\alpha}_i)\otimes V(0,\tilde{\alpha}_j)$$ 
$$\simeq V(0,\tilde{\alpha}_1)\otimes...\otimes \hat{V}(0,\tilde{\alpha}_i)\otimes ...\otimes \hat{V}(0,\tilde{\alpha}_j)\otimes  ...\otimes V(0,\tilde{\alpha}_{n})\otimes V(0,\tilde{\alpha}_i+\epsilon )\otimes V(0,\tilde{\alpha}_j-\epsilon )$$
$$
\simeq V(0,\tilde{\alpha}_1)\otimes...\otimes V(0,\tilde{\alpha}_i+\epsilon )\otimes ...\otimes V(0,\tilde{\alpha}_j-\epsilon )\otimes  ...\otimes V(0,\tilde{\alpha}_{n}).
$$
This concludes the proof.  
\end{proof}
\section{The right trace and its modified dimension}  

\subsection{The existence of the right trace}
In Subsection \ref{SS:trace} we recalled several results about right traces.  Here we apply these results to construct a right trace on the  ideal generated by $V(0,\tilde{\alpha})$ for 
$\bar\alpha\notin \{\bar0, \bar{\frac{\rt}{2}}\}$.    

We've seen in the Decomposition Lemma \ref{L:DecompV0Vn} that for $\tilde{\alpha }, \tilde{\beta }\in \C/\rt\Z$ such that 
$\bar{\alpha },\bar{\beta }, \bar{\alpha }+\bar{\beta }\notin  \{\bar0, \bar{\frac{\rt}{2}}\}$
we have the following decomposition:
$$ V(0,\tilde{\alpha} )\otimes V(0,\tilde{\beta} )=V(0,\tilde{\alpha} +\tilde{\beta} )\oplus V(0,\tilde{\alpha} +\tilde{\beta}+1)\oplus V(1,\tilde{\alpha} +\tilde{\beta} ).$$
In the case 
$$V(0,\tilde{\alpha } )\otimes V(0,\tilde{\alpha } )^{\ast }=V(0,\tilde{\alpha } )\otimes V(0,-\tilde{\alpha } -1 ) $$ the decomposition is no longer semi-simple, and the two  $4$-dimensional modules corresponding to $V(0,-1)$ and $V(0,0)$ merge into an indecomposable non-simple $8$-dimensional module which we will denote by $V_1(\tilde{\alpha } )$.  More precisely we have the following result: 

\begin{proposition} \label{P:DecVotimesV*}
Let $\tilde{\alpha }\in \C/\Z$ such that 
$\bar{\alpha }\notin \{\bar0, \bar{\frac{\rt}{2}}\}$.  
 We have the following decomposition:
\begin{equation}\label{E:VV*VopV}
V(0,\tilde{\alpha } )\otimes V(0,\tilde{\alpha } )^{\ast} =V_1(\tilde{\alpha } ) \oplus V_2(\tilde{\alpha })
\end{equation} 
where 
$V_2(\tilde{\alpha } )$ is an  8-dimensional simple module
 and  $V_1(\tilde{\alpha } )$ is an 
 indecomposable 
 module
 such that $\Hom_\cat(\C,V_1(\tilde{\alpha } ))$ and $\Hom_\cat(V_1(\tilde{\alpha } ),\C)$ are both non-zero.  
 \end{proposition}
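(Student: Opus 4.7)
My plan is to adapt the strategy of the Decomposition Lemma \ref{L:DecompV0Vn} to this boundary case. In Lemma \ref{L:DecompV0Vn} with $n=0$ the tensor product $V(0,\tilde\alpha)\otimes V(0,\tilde\beta)$ splits as $V(0,\tilde\alpha+\tilde\beta)\oplus V(1,\tilde\alpha+\tilde\beta)\oplus V(0,\tilde\alpha+\tilde\beta+\tilde1)$. Setting $\tilde\beta=-\tilde\alpha-\tilde1$ forces $\bar\alpha+\bar\beta=\bar0$, so the decomposition lemma's hypotheses fail; however dimensions still give $4+8+4=16$. I expect the $8$-dimensional middle summand $V(1,-\tilde1)$ to survive as a simple direct summand $V_2$, while the two end pieces merge into an indecomposable $8$-dimensional block $V_1$.

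\textbf{Constructing $V_2$.} I would find a highest weight vector $v_{V_2}\in V(0,\tilde\alpha)\otimes V(0,\tilde\alpha)^*$ of weight $(q^1,q^{-\tilde1})$ by writing it as a linear combination of the tensor-basis vectors of the correct weight and solving $E_1v=E_2v=0$, exactly as done for $v_2,v_3,v_4$ in the proof of Lemma \ref{L:DecompV0Vn}. The hypothesis $\bar\alpha\notin\{\bar0,\bar{\tfrac\rt2}\}$ keeps the denominators $[\tilde\alpha]$ and $[\tilde\alpha+\tilde1]$ nonzero, so the solution exists and is unique up to scalar. The submodule $V_2:=\Uq\cdot v_{V_2}$ is then a highest weight module of weight $(q^1,q^{-\tilde1})$, and the classification in \cite{AAB} identifies it with the simple module $V(1,-\tilde1)$ of dimension $8$ (since $[-\tilde1][\tilde1]\neq 0$).

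\textbf{Splitting off $V_2$.} To see $V_2$ is a direct summand, I would either construct the dual projection by exhibiting a unique (up to scalar) nonzero element of $\Hom_\cat(V(0,\tilde\alpha)\otimes V(0,\tilde\alpha)^*,V(1,-\tilde1))$ and using it to build an idempotent, or argue by a weight-space analysis that $V_2$ is the socle component of its central character. Define $V_1$ as the complementary direct summand; by dimension count $\dim V_1=8$.

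\textbf{The Hom conditions.} The vector $\coev(1)$ has weight $(q^0,q^0)$ and is killed by both $E_1$ and $E_2$. A direct check against Theorem \ref{T:Structure} applied to $V(1,-\tilde1)$ shows that the only weight-$(q^0,q^0)$ vectors in $V_2$ are of the form $a\,w_{0,1,0}+b\,w_{1,0,1}$, and the equations $E_1v=E_2v=0$ force $a=b=0$. Hence the $V_2$-component of $\coev(1)$ under the splitting is zero, so $\coev$ factors through $V_1$ and $\Hom_\cat(\C,V_1)\neq0$. Since $V(1,-\tilde1)^*\cong V(1,-\tilde1)$, the same argument applied to the dual gives $\Hom_\cat(V_2,\C)=0$, so $\tev$ also factors through $V_1$, yielding $\Hom_\cat(V_1,\C)\neq0$.

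\textbf{Indecomposability.} A direct computation from Equation \eqref{E:DualityForCat} on the basis of $V(0,\tilde\alpha)$ gives $\qdim(V(0,\tilde\alpha))=\tev\circ\coev=0$. If $V_1$ split as $\C\oplus P$ with $\coev$ factoring through the $\C$ summand, then $\tev|_{\C}$ would be a nonzero scalar (since $\tev|_{V_1}\neq 0$ forces $\tev$ to be nontrivial on the trivial summand, as $\Hom_\cat(P,\C)$ would otherwise account for all of $\Hom_\cat(V_1,\C)$, but then the $\C$ summand of $\coev$ would lie in the wrong direction), contradicting $\tev\circ\coev=0$. More generally, the composition series $0\subset\mathrm{im}(\coev)\subset\ker(\tev|_{V_1})\subset V_1$ with trivial top and socle forces $V_1$ to be indecomposable: any nontrivial idempotent of $\End_\cat(V_1)$ would have to respect this filtration, forcing a splitting of either socle or top as a trivial summand, which is the case already excluded. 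The main obstacle I anticipate is this last step of ruling out all nontrivial decompositions of $V_1$, rather than just the splitting of a trivial summand; the cleanest route is probably an explicit computation of $\End_\cat(V_1)$ via weight-space analysis to show it is local.
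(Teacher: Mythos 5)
Your overall strategy --- isolate the simple $8$-dimensional piece $V(1,-\tilde 1)$ as a highest-weight submodule and treat the remaining $8$ dimensions as the indecomposable block --- is consistent with what actually happens, and your highest-weight computation and the Hom-space arguments (that $\coev$ and $\tev$ must factor through $V_1$ because $\Hom_\cat(\C,V_2)=\Hom_\cat(V_2,\C)=0$) are fine as far as they go. But the two steps carrying the real content of the proposition are left unexecuted, and neither is automatic.

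First, you never prove that $V_2$ is a direct summand. A simple submodule of a non-semisimple module need not split off, and the ambient module here is exactly the degenerate case excluded from the Decomposition Lemma; so ``define $V_1$ as the complementary direct summand'' is not yet meaningful, and the two routes you mention (building an idempotent from a projection onto $V(1,-\tilde 1)$, or a socle argument) are only named, not carried out. (Relatedly, the shortcut used in the Decomposition Lemma --- a highest weight module of weight $(q^i,q^{\tilde\gamma})$ with $\bar\gamma\notin\{\bar0,\bar{\frac\rt2}\}$ is simple --- does not apply verbatim to $V(1,-\tilde1)$, since $\overline{-1}=\bar0$; you correctly fall back on the finer criterion $[-1][1]\neq0$, but this needs to be said.) Second, your indecomposability argument for $V_1$ only excludes splitting off a trivial direct summand: a filtration with trivial socle and trivial top contained in the kernel of the quotient map does not by itself rule out, say, a decomposition into two nontrivial $4$-dimensional pieces, and you concede this yourself. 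The paper closes both gaps by explicit computation: it exhibits generators $u_0$ and $v_7$ of $V_2(\tilde\alpha)$ and $V_1(\tilde\alpha)$, tabulates the full $\Uq$-action on both $8$-dimensional modules, proves $V_2$ simple by driving an arbitrary nonzero vector to $u_0$ with explicit monomials in the $E_i$, proves $V_1$ indecomposable by showing that any direct summand containing a vector with nonzero $v_7$-coefficient must contain all of $\{v_0,\dots,v_7\}$, and obtains the direct sum from $\dim V_1+\dim V_2=16$ together with $V_1\cap V_2=\{0\}$ (the latter because $V_2$ is simple while $V_1$ contains a trivial submodule). Some such explicit analysis --- or at least a computation of $\End_\cat\bigl(V(0,\tilde\alpha)\otimes V(0,\tilde\alpha)^*\bigr)$ --- seems unavoidable, so as written the proposal is a plan rather than a proof.
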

 
\begin{proof}
Recall $V(0,\tilde{\alpha } )^*$ is isomorphic to $V(0,-\tilde{\alpha } -1 )$.   Let  $\{w_{\rho,\sigma}^{\tilde{\alpha}}\}$ and $\{w_{\rho,\sigma}^{-\tilde{\alpha}-1}\}$ be the bases of $V(0,\tilde{\alpha } )$ and $V(0,-\tilde{\alpha } -1 )$ given in   Theorem~\ref{T:Structure}.  
Consider the vectors of $V(0,\tilde{\alpha } )\otimes V(0,-\tilde{\alpha } -1 )$:
$$v_7=q^{-\tilde{\alpha}-1} [\tilde{\alpha}] w^{\tilde{\alpha}}_{1,1} \otimes w^{-\tilde{\alpha}-1}_{0,0}+q^{\tilde{\alpha}} [\tilde{\alpha}+1] w^{\tilde{\alpha}}_{0,0} \otimes w^{-\tilde{\alpha}-1}_{1,1}$$
and
$$
 u_0=[\alpha]w_{0,0}^{\tilde{\alpha}}\otimes w^{-\tilde{\alpha}-1}_{1,0} + q^{-\tilde{\alpha}} [\alpha+1]w_{1,0}^{\tilde{\alpha}}\otimes w^{-\tilde{\alpha}-1}_{0,0} .
$$

Let  $V_1(\tilde{\alpha } )$ and $V_2(\tilde{\alpha } )$ be the modules generated by $v_7$ and $u_0$, respectively.  The action of these modules is given in Tables \ref{Table:ActionV1} and \ref{Table:ActionV2} where $\{v_i\}$ and $\{u_i\}$ are bases for the corresponding modules.  

\begin{table}[b]
\caption{Action on $V_1(\tilde{\alpha } )$, where $c=q^{-\tilde{\alpha }} (q^{-1}[\tilde{\alpha} ]-[\tilde{\alpha }+1 ])$.}\label{Table:ActionV1}
\begin{center}
 \begin{tabular}{ | l || c | c | c | c | c | c | c | c | }
    \hline 
      $V_1(\tilde{\alpha } )$& $v_0 \ \ \  $ & $v_1 \ \ \ $ & $v_2 \ \ \ $ & $v_3 \ \ \ $ & $v_4 \ \ \ $ & $v_5 \ \ \ $ & $v_6 \ \ \ $ & $v_7 \ \ \ $\\ \hline
      $E_1$ & $ \ \ 0 \ \ $ & $ \ \ 0 \ \ $ & $ \ \ v_1 \ \ $ & $0$ & $0$  & $v_4$ & $ \ \ 0 \ \ $ & $0$ \\ \hline
      $E_2$ & $0$ & $c\cdot v_0$ & $0$ & $0$ & $[\tilde{\alpha}][\tilde{\alpha}+1]v_3$ & $0$ & $v_5 $ & $-[\tilde{\alpha}][\tilde{\alpha }+1]v_2$ \\ \hline
      $F_1$ & $0$ & $v_2$ & $0$ & $0$ & $v_5$  & $0$ & $0$ & $0$ \\ \hline    
      $F_2$ & $v_1$ & $0$ & $v_3$ & $0$ & $0$  & $-c\cdot v_6$ & $0$ & $v_4$ \\ \hline    
   \hline
  \end{tabular}
\end{center}
\end{table}%

\begin{table}
\caption{Action on $V_2(\tilde{\alpha } )$, where $c=q^{-\tilde{\alpha }} (q^{-1}[\tilde{\alpha} ]-[\tilde{\alpha }+1 ])$.}\label{Table:ActionV2}
\begin{center}
  \begin{tabular}{ | l || c | c | c | c | c | c | c | c | }
    \hline 
      $V_2(\tilde{\alpha } )$& $u_0 \ \ \  $ & $u_1 \ \ \ $ & $u_2 \ \ \ $ & $u_3 \ \ \ $ & $u_4 \ \ \ $ & $u_5 \ \ \ $ & $u_6 \ \ \ $ & $u_7 \ \ \ $\\ \hline
      $E_1$ & $ \ \ 0 \ \ $ & $ \ \ u_0 \ \ $ & $ \ \ u_5 \ \ $ & $ \ \ u_6 \ \ $ & $ \ \ u_7 \ \ $ & $ \ \ 0 \ \ $  & $ (q+q^{-1})v_5$ & $ \ \ 0 \ \ $ \\ \hline
      $E_2$ & $0$ & $0$ & $0$ & $0$ & $-c\cdot u_3$ & $c\cdot u_0$ & $c\cdot u_1 $ & $c\cdot u_2$ \\ \hline
      $F_1$ & $u_1$ & $0$ & $u_3$ & $0$ & $0$  & $u_6$ & $0$ & $u_4$ \\ \hline    
      $F_2$ & $u_5$ & $u_2$ & $0$ & $u_4$ & $0$  & $0$ & $u_7$ & $0$ \\ \hline    
   \hline
  \end{tabular}
\end{center}
\end{table}%

We will show that module $V_1(\tilde{\alpha })$ is indecomposable.  Suppose $W_1$ and $W_2$ are modules such $V_1(\tilde{\alpha })=W_1\oplus W_2$.  Since $\{v_i\}$ is a basis of $V_1(\tilde{\alpha })$ there exists 
$$v=c_0v_0+ c_1v_1+ c_2v_2+...+c_7v_7
$$ 
such that $c_7\not= 0$ and $v\in W_1$ or $v\in W_2$.  Without loss of generality assume $v\in W_1$.  From Table \ref{Table:ActionV1} we have 
$ F_2E_2E_1E_2v$ is a non-zero multiple of $ v_1$.  So $v_1\in W_1$.    Then Table \ref{Table:ActionV1} implies that 
$$\{v_1, c^{-1}E_2v_1, F_1v_1, F_2F_1v_1 \}=\{v_0,v_1,v_2,v_3\}\subset W_1.$$
Similarly, $E_2F_2F_1F_2v$ is a non-zero multiple of $v_5$ so $v_5\in W_1 $ and 
$$\{v_5, E_1v_5, -c^{-1}F_2v_5 \}=\{v_4,v_5,v_6\}\subset W_1.$$
Since $W_1$ is a submodule we have
$$
v_7=c_7^{-1}(v-c_0v_0-c_1v_1 -c_2v_2-...-c_6v_6)\in W_1
$$
Thus, $W_1=V_1(\tilde{\alpha })$ and we have showed that $V_1(\tilde{\alpha })$ is indecomposable.

Next we will show that $V_2(\tilde{\alpha })$ is simple.  Suppose $U$ is a non-zero submodule of $V_2(\tilde{\alpha })$.  Notice that  the generator $u_0$ of $V_2(\tilde{\alpha })$ is a highest weight vector.  The idea is to push any non-zero vector of $U$ to a multiple of $u_0$.   So let $u$ be a non-zero vector of $U$.  Write $u$ in terms of the basis $\{u_i\}$:
$$
u=c_0u_0+c_1u_1+c_2u_2+...+c_7u_7.
$$ 
If there exists an element $x$ in $\Uq$ such that $xu$ is a non-zero multiple of $u_0$ then since $u_0$ is a generator of $V_2(\tilde{\alpha })$ we would have  $ U\cong V_2(\tilde{\alpha })$.   We will show this is true for all possible non-zero coefficients of $u$.  
\begin{enumerate}
\item If $c_4\not= 0$ then from the $\Uq$-action given in the above table we have $E_1E_2E_1E_2u$ is a non-zero multiple of $u_0$.  
\item If $c_4=0$ and $c_7\not=0$ then $E_2E_1E_2u$ is a non-zero multiple of $u_0$.  
\item If $c_4=c_7=0$ and $c_3\not=0$ then $E_1E_2E_1u$ is a non-zero multiple of $u_0$.  
 \item If $c_4=c_7=c_3=0$ and $c_6\not=0$ then $E_1E_2u$ is a non-zero multiple of $u_0$.  
   \item If $c_4=c_7=c_3=c_6=0$ and $c_2\not=0$ then $E_2E_1u$ is a non-zero multiple of $u_0$.  
\item If $c_4=c_7=c_3=c_6=c_2=0$ and $c_5\not=0$ then $E_2u$ is a non-zero multiple of $u_0$.  
\item Finally,  if $c_2=c_3=c_4=c_5=c_6=c_7=0$ and $c_1\not=0$ then $E_1u$ is a non-zero multiple of $u_0$.
\item Finally,  if $c_1=c_2=c_3=c_4=c_5=c_6=c_7=0$ then $c_0\not=0$ and $u$ is a non-zero multiple of $u_0$. 
\end{enumerate}
Thus, $U\cong V_2(\tilde{\alpha })$ and $V_2(\tilde{\alpha }) $ is simple.  

Next, we consider the head and socle of $V_1(\tilde{\alpha})$.  We have 
$$v_3=q^{2(-\tilde{\alpha}-1)}w^{\tilde{\alpha}}_{1,1} \otimes w^{-\tilde{\alpha}-1}_{0,0}-q^{-\tilde{\alpha}-1}w^{\tilde{\alpha}}_{0,1} \otimes w^{-\tilde{\alpha}-1}_{1,0}+q^{-\tilde{\alpha}}w^{\tilde{\alpha}}_{1,0} \otimes w^{-\tilde{\alpha}-1}_{0,1}+ w^{\tilde{\alpha}}_{0,0} \otimes w^{-\tilde{\alpha}-1}_{1,1} $$
which generates the trivial module in $V_1(\tilde{\alpha })$.  Thus, $\Hom_\cat(\C,V_1(\tilde{\alpha }))$ is non-zero.  Also, from Table \ref{Table:ActionV1} we can see the map
$$V_1(\tilde{\alpha })\to \C \text{ given by }c_0v_0+ c_1v_1+...+c_7v_7 \mapsto c_7$$
is a $\Uq$-module morphism.   Thus,  $\Hom_\cat(V_1(\tilde{\alpha }),\C)$ is non-zero.

Finally, we prove that Equation \eqref{E:VV*VopV} holds.  Since the dimension of $V(0,\tilde{\alpha } )\otimes V(0,\tilde{\alpha } )^{\ast}$ is equal to the sum of the dimensions of $V_1(\tilde{\alpha } )$ and $ V_2(\tilde{\alpha })$, 
 it suffices to show that $ V_1(\tilde{\alpha })\cap V_2(\tilde{\alpha })=\{0\}$.   
Suppose this is not true.  Then there exists a non-zero  $v\in V_1(\tilde{\alpha })\cap V_2(\tilde{\alpha })$.   Since $ V_2(\tilde{\alpha })$ is simple then $V_2(\tilde{\alpha })$ is isomorphic to the module $<v>$ generated by $v$.  But since $v\in V_1(\tilde{\alpha })$ then $V_2(\tilde{\alpha })\cong<v>\subset V_1(\tilde{\alpha })$.  Since $V_1(\tilde{\alpha })$ and $V_2(\tilde{\alpha })$ have the same dimension this implies that $V_2(\tilde{\alpha })\cong V_1(\tilde{\alpha })$ which is a contradiction because $V_1(\tilde{\alpha })$ contains the trivial module as a submodule  and $V_2(\tilde{\alpha })$ is simple.  Thus, we have the decomposition.   
\end{proof}

\begin{corollary}
Let $\tilde{\alpha} \in \C/ \rt\Z$ such that 
$\bar{\alpha }\notin \{\bar0, \bar{\frac\rt2}\}$.  
Then $V(0,\tilde{\alpha})$ is a right ambidextrous object in the category $\cat.$ 
\end{corollary}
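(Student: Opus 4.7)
The plan is to apply Theorem \ref{T:RightAmbij=j'}, which reduces right ambidexterity of the simple object $V(0,\tilde{\alpha})$ to checking that the indices $j$ and $j'$ attached to the indecomposable decomposition of $V(0,\tilde{\alpha})\otimes V(0,\tilde{\alpha})^{*}$ coincide. The decomposition into indecomposables is already provided by Proposition \ref{P:DecVotimesV*}:
\[
V(0,\tilde{\alpha})\otimes V(0,\tilde{\alpha})^{*} = V_{1}(\tilde{\alpha}) \oplus V_{2}(\tilde{\alpha}),
\]
with $V_{2}(\tilde{\alpha})$ a simple $8$-dimensional module (in particular not isomorphic to $\unit=\C$) and $V_{1}(\tilde{\alpha})$ indecomposable with both $\Hom_{\cat}(\C,V_{1}(\tilde{\alpha}))\neq 0$ and $\Hom_{\cat}(V_{1}(\tilde{\alpha}),\C)\neq 0$.

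Let $i_{k}$ and $p_{k}$, $k\in\{1,2\}$, be the inclusion and projection morphisms corresponding to this decomposition. I would first observe that, by Schur's lemma, $\Hom_{\cat}(\C,V_{2}(\tilde{\alpha}))=0$ and $\Hom_{\cat}(V_{2}(\tilde{\alpha}),\C)=0$, since $V_{2}(\tilde{\alpha})$ is simple and not isomorphic to the trivial module. Combined with Lemma \ref{L:Uniquejj}, which guarantees unique indices $j,j'\in\{1,2\}$ such that $\Hom_{\cat}(\unit,W_{j})$ is spanned by $p_{j}\coev_{V(0,\tilde{\alpha})}$ and $\Hom_{\cat}(W_{j'},\unit)$ is spanned by $\tev_{V(0,\tilde{\alpha})}\,i_{j'}$, we are forced to conclude $j=1$ and $j'=1$, because these Hom-spaces can only be realized through the summand $V_{1}(\tilde{\alpha})$.

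Hence $j=j'=1$, and Theorem \ref{T:RightAmbij=j'} immediately yields that $V(0,\tilde{\alpha})$ is right ambidextrous in $\cat$. Since the key structural input (the $V_{1}\oplus V_{2}$ decomposition and the fact that $V_{1}(\tilde{\alpha})$ is the only summand containing $\unit$ in its socle and head) has already been established in Proposition \ref{P:DecVotimesV*}, no essential obstacle remains; the only minor point to check is that $V(0,\tilde{\alpha})$ itself is a simple object of $\cat$, which holds under the hypothesis $\bar{\alpha}\notin\{\bar{0},\overline{\tfrac{\rt}{2}}\}$ by the Remark on simplicity following Theorem \ref{T:Structure}, so the simple object hypothesis of Theorem \ref{T:RightAmbij=j'} is satisfied.
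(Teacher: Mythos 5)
Your proof is correct and follows essentially the same route as the paper: the authors likewise invoke Proposition \ref{P:DecVotimesV*} to get the decomposition into $V_1(\tilde{\alpha})\oplus V_2(\tilde{\alpha})$, note that simplicity of the $8$-dimensional $V_2(\tilde{\alpha})$ forces $\Hom_\cat(\C,V_2(\tilde{\alpha}))=\Hom_\cat(V_2(\tilde{\alpha}),\C)=0$, and then conclude $j=j'$ via Lemma \ref{L:Uniquejj} and Theorem \ref{T:RightAmbij=j'}. Your additional remark that $V(0,\tilde{\alpha})$ is simple under the stated hypothesis is a harmless (and correct) bit of extra care.
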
 
\begin{proof}
Equation \eqref{E:VV*VopV} gives a decomposition of $V(0,\tilde{\alpha } )\otimes V(0,\tilde{\alpha } )^{\ast}$ into indecomposable as in Equation \eqref{E:Decomp} where $W_1=V_1(\tilde{\alpha })$ and $W_2=V_2(\tilde{\alpha })$.
Since $W_2=V_2(\tilde{\alpha })$ is an 8-dimensional simple module then $\Hom_\cat(\C,W_2)=\Hom_\cat(W_2,\C)=0$.  From Lemma \ref{L:Uniquejj} there are unique $j,j'\in \{0,1\}$ such that $\Hom_{\cat}(\unit , W_{j})$ and $\Hom_{\cat}(W_{j'},\unit ) $ are non-zero.  Thus, $j=j'=1$ and Theorem \ref{T:RightAmbij=j'} implies $V(0,\tilde{\alpha } )$ is right ambidextrous.  
\end{proof}
\subsection{The modified trace}
 From Theorem 10 of \cite{GKP2} (for a statement see Theorem \ref{T:rigthambiTrace} above) we have that the right ambidextrous object $V(0,\tilde{\alpha})$ gives a unique right trace:
\begin{theorem}[\cite{GKP2}]
Let $\tilde{\alpha} \in \C/ \rt\Z$ such that 
$\bar{\alpha }\notin \{\bar0, \bar{\frac\rt2}\}$. 
There exists a non-zero right trace $\{\qt_V\}$ on the ideal $\ideal_{V(0,\tilde{\alpha})}$ which is unique up to multiplication by a non-zero scalar.
\end{theorem}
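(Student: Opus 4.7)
The plan is to apply Theorem \ref{T:rigthambiTrace} directly. That abstract result from \cite{GPV} asserts that any right ambidextrous simple object $V$ of a pivotal $\FK$-category gives rise to a non-zero right trace on the ideal $\ideal_V$, unique up to multiplication by a non-zero scalar. So the proof reduces to verifying these two hypotheses for $V(0,\tilde\alpha)$ viewed as an object of $\cat$.

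First I would verify simplicity. By the remark labeled \ref{R:simple}, the assumption $\bar\alpha \notin \{\bar 0, \bar{\frac{\rt}{2}}\}$ forces both $[\tilde\alpha]$ and $[\tilde\alpha+1]$ to be non-zero, so the criterion $[\tilde\alpha][\tilde\alpha+1] \neq 0$ applies and $V(0,\tilde\alpha)$ is simple as a $\Uq$-module, hence simple in $\cat$. Moreover, $V(0,\tilde\alpha)$ is clearly an object of $\cat$ by Definition \ref{D:cat}. Second, I would invoke the immediately preceding corollary, which established that $V(0,\tilde\alpha)$ is a right ambidextrous object of $\cat$; this was itself derived from the decomposition $V(0,\tilde\alpha)\otimes V(0,\tilde\alpha)^* = V_1(\tilde\alpha) \oplus V_2(\tilde\alpha)$ of Proposition \ref{P:DecVotimesV*} combined with the $j=j'$ criterion of Theorem \ref{T:RightAmbij=j'}.

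With both hypotheses in place, Theorem \ref{T:rigthambiTrace} yields the desired right trace and its uniqueness up to scalar. There is essentially no obstacle remaining: all the genuine work (the computation of the indecomposable decomposition of $V(0,\tilde\alpha)\otimes V(0,\tilde\alpha)^*$ and the identification of the unique summands with a non-trivial hom to and from $\unit$) has already been carried out in Proposition \ref{P:DecVotimesV*} and the corollary preceding the statement. The theorem as stated is therefore a direct specialization of the abstract existence/uniqueness principle to the case at hand, and the proof is a single sentence of invocation.
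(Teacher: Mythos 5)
Your proposal is correct and matches the paper's own treatment: the theorem is stated as an immediate consequence of Theorem \ref{T:rigthambiTrace} applied to the right ambidextrous simple object $V(0,\tilde\alpha)$, whose ambidextrousness was established in the corollary to Proposition \ref{P:DecVotimesV*}. The paper gives no further argument, so your one-line invocation is exactly what is intended.
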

The following lemma shows that the ideal generated by $V(0,\tilde{\alpha})$ contains all objects of $\cat$ except the trivial module $\C$ and thus the right trace is defined on all these objects.  It follows that this ideal is independent of $\tilde{\alpha}$ and we will denote it by $\ideal$.  
\begin{lemma}\label{Generating module}
For any $\tilde{\alpha} \in \C/ \rt\Z$ such that $\bar{\alpha}\notin \{\bar0, \bar{\frac\rt2}\}$  we have 
$\ideal_{V(0,\tilde{\alpha})}=\cat\setminus \{\C\}.$ 
\end{lemma}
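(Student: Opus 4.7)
The substantive content is the inclusion $\cat\setminus\{\C\}\subseteq \ideal_{V(0,\tilde\alpha)}$; I set aside the reverse containment, which can be obtained, for example, from $\qdim_\cat V(0,\tilde\alpha)=0$ combined with the trace identity $\qd(V)=\qdim_\cat(V)\,\qd(\C)$ that would be forced by property \eqref{I:Prop2Trace} of a right trace if $\C$ were to belong to $\ideal_{V(0,\tilde\alpha)}$. For the forward direction, let $W\in\cat\setminus\{\C\}$. By Definition \ref{D:cat}, $W$ is a retract of a module $V:=V(0,\tilde\alpha_1)\otimes\cdots\otimes V(0,\tilde\alpha_n)$ with $n\geq 1$ and each $\bar\alpha_i\in(\C/\Z)\setminus\XX$. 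Since retracts compose and $V=V(0,\tilde\alpha_1)\otimes V'$ with $V':=V(0,\tilde\alpha_2)\otimes\cdots\otimes V(0,\tilde\alpha_n)\in\cat$ (convention: $V'=\C$ if $n=1$), it suffices to exhibit $V(0,\tilde\alpha_1)$ as a retract of $V(0,\tilde\alpha)\otimes X_1$ for some $X_1\in\cat$; then $W$ becomes a retract of $V(0,\tilde\alpha)\otimes(X_1\otimes V')$, so $W\in\ideal_{V(0,\tilde\alpha)}$.

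My plan for this core step is to apply the Decomposition Lemma \ref{L:DecompV0Vn} twice, inserting an intermediate weight. Choose $\tilde\beta_1\in\C/\rt\Z$ and set $\tilde\beta_2:=\tilde\alpha_1-\tilde\alpha-\tilde\beta_1$. The first application (with $n=0$ and weights $\tilde\alpha,\tilde\beta_1$) presents $V(0,\tilde\alpha+\tilde\beta_1)$ as a direct summand of $V(0,\tilde\alpha)\otimes V(0,\tilde\beta_1)$; the second (with weights $\tilde\alpha+\tilde\beta_1$ and $\tilde\beta_2$) presents $V(0,\tilde\alpha_1)=V(0,\tilde\alpha+\tilde\beta_1+\tilde\beta_2)$ as a direct summand of $V(0,\tilde\alpha+\tilde\beta_1)\otimes V(0,\tilde\beta_2)$. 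Tensoring the first retraction with $\Id_{V(0,\tilde\beta_2)}$ and composing with the second exhibits $V(0,\tilde\alpha_1)$ as a direct summand, hence a retract, of $V(0,\tilde\alpha)\otimes V(0,\tilde\beta_1)\otimes V(0,\tilde\beta_2)$, so one takes $X_1:=V(0,\tilde\beta_1)\otimes V(0,\tilde\beta_2)$.

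All that remains is to verify that $\tilde\beta_1$ can be chosen so that both applications of the Decomposition Lemma are legal and so that both factors $V(0,\tilde\beta_i)$ actually lie in $\cat$. In every case $\{\bar 0,\bar{\frac{\rt}{2}}\}\subseteq\XX$ (even $\rt$ gives $\bar{\frac{\rt}{2}}=\bar 0$; odd $\rt$ gives $\bar{\frac{\rt}{2}}=\bar{\frac{1}{2}}$), so the conditions $\bar\beta_i\notin\{\bar 0,\bar{\frac{\rt}{2}}\}$ are subsumed by $\bar\beta_i\notin\XX$, and the final weight condition $\bar\alpha_1=\bar\alpha+\bar\beta_1+\bar\beta_2\notin\{\bar 0,\bar{\frac{\rt}{2}}\}$ follows from $\bar\alpha_1\notin\XX$. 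What must be avoided, expressed as restrictions on $\bar\beta_1\in\C/\Z$ (with $\bar\beta_2=\bar\alpha_1-\bar\alpha-\bar\beta_1$ thereby determined), is the finite set $\XX\cup(\bar\alpha_1-\bar\alpha-\XX)\cup(-\bar\alpha+\{\bar 0,\bar{\frac{\rt}{2}}\})$. Since $\C/\Z$ is uncountable, a valid $\bar\beta_1$ always exists and lifts freely to $\tilde\beta_1\in\C/\rt\Z$. I do not anticipate a genuine obstacle here; the only delicacy is the bookkeeping of the weight constraints.
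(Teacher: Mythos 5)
Your treatment of the substantive inclusion $\cat\setminus\{\C\}\subseteq\ideal_{V(0,\tilde\alpha)}$ is correct and is essentially the paper's argument: the paper likewise writes the target weight as $\tilde\alpha+\tilde\beta+\tilde\gamma$, applies the Decomposition Lemma \ref{L:DecompV0Vn} twice through the intermediate module $V(0,\tilde\alpha+\tilde\beta)$, and then uses closure of the ideal under tensoring and retracts to absorb an arbitrary object of $\cat$. Your bookkeeping of which values of $\bar\beta_1$ must be avoided (including the requirement $\bar\beta_i\notin\XX$ so that the auxiliary factors genuinely lie in $\cat$, and the observation that $\{\bar0,\bar{\frac\rt2}\}\subseteq\XX$) is if anything more careful than the paper's.

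The one place you fall short is the reverse containment, which you defer but whose sketch is not yet a proof. Granting that $\C\in\ideal_{V(0,\tilde\alpha)}$ forces $\qt$ to satisfy $\qd(V)=\qdim_\cat(V)\,\qd(\C)$, and that $\qdim_\cat(V(0,\tilde\alpha))=0$, you obtain only $\qd(V(0,\tilde\alpha))=0$ — and a trace proportional to the ordinary quantum trace is not absurd on its face. To close the argument you must also show that a non-zero right trace on $\ideal_{V(0,\tilde\alpha)}$ cannot have $\qd(V(0,\tilde\alpha))=0$: since every object of the ideal is a retract of $V(0,\tilde\alpha)\otimes X$, properties (1) and (2) of a right trace reduce the value of $\qt$ on any endomorphism to a multiple of $\qt_{V(0,\tilde\alpha)}(\Id)$, so vanishing of $\qd$ on the simple generator forces $\qt\equiv0$, contradicting Theorem \ref{T:rigthambiTrace}. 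The paper takes a slightly different route to the same contradiction, quoting Lemma 4.2.2 of \cite{GKP} to conclude that $\ideal_{V(0,\tilde\alpha)}=\ideal_{\C}$ would imply $\qdim_\cat(V(0,\tilde\alpha))\neq0$. Either way, this missing step should be supplied rather than asserted.
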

\begin{proof}
First, we will show $\ideal_{V(0,\tilde{\alpha})}\subseteq \cat\setminus \{ \C\}$.  By definition this ideal is contained in $\cat$ so we only need to show  $\C\notin  \ideal_{V(0,\tilde{\alpha})}$.  Suppose on the contrary that $\ideal_{V(0,\tilde{\alpha})}=\cat$.    From Lemma \ref{L:Uniquejj} and Theorem \ref{T:RightAmbij=j'} it follows that the trivial module $\C$ is right ambidextrous.  By Theorem \ref{T:rigthambiTrace} there is a unique right trace on  $\ideal_{\C}=\cat$.  It is easy to see this trace is equal to the usual quantum trace in $\cat$ and its associated modified dimension is the usual quantum dimension $\qdim$.  Since $\ideal_{V(0,\tilde{\alpha})}=\cat=\ideal_\C$ then from the proof of Lemma 4.2.2 in \cite{GKP} we have $\qdim(V(0,\tilde{\alpha}))\not= 0$ (note \cite{GKP} requires a braiding but it is easy to see the cited proof works in our pivotal context).  But this is a contradiction so we have the desired inclusion.  

Now, we will show the converse inclusion.   First, notice that if $\tilde{\beta}\in \C/\rt\Z$ satisfies $ \bar{\beta}, \bar{\alpha} + \bar{\beta}\notin \{\bar0, \bar{\frac\rt2}\}$ then Lemma \ref{L:DecompV0Vn} implies that $V(0,\tilde{\alpha}+\tilde{\beta})$ is a retract of $V(0,\tilde{\alpha})\otimes V(0,\tilde{\beta})$.  Therefore, $V(0,\tilde{\alpha}+\tilde{\beta})\in \ideal_{V(0,\tilde{\alpha})}$.  Now if $\tilde{\mu}\in \C/\rt\Z$ such that $\bar{\mu}\notin \{\bar0, \bar{\frac\rt2}\}$ then there exists $\tilde{\beta}, \tilde{\gamma} \in \C/\rt\Z$ such that 
$\bar{\beta}, \bar{\gamma} \notin \{\bar0, \bar{\frac\rt2}\}$, $\tilde{\mu}=\tilde{\alpha} + \tilde{\beta} + \tilde{\gamma}$ and 
$\tilde{\alpha} + \tilde{\beta} \notin \{\bar0, \bar{\frac\rt2}\}$. 
Lemma~\ref{L:DecompV0Vn} implies 
$V(0,\tilde{\mu}) $ is a retract of $V(0,\tilde{\alpha}+\tilde{\beta})\otimes V(0,\tilde{\gamma})$.  
We have proved that if $\tilde{\mu}\in \C/\rt\Z$ with $\bar{\mu}\notin \{\bar0, \bar{\frac\rt2}\}$ then  $V(0,\tilde{\mu}) \in \ideal_{V(0,\tilde{\alpha})}$.  

Now let $V\in \cat\setminus \{ \C \}$.  By definition of $\cat$ there exists $ \tilde{\alpha}_1,...,\tilde{\alpha}_n\in \C/  \rt\Z$ with $\bar{\alpha}_1,..., \bar{\alpha}_n \notin \XX $ such that $V$ is a retract of $V(0, \tilde{\alpha}_1)\otimes V(0, \tilde{\alpha}_2)\otimes...\otimes V(0, \tilde{\alpha}_n).$  Since $V(0, \tilde{\alpha}_1)$ is in the ideal $\ideal_{V(0,\tilde{\alpha})}$ then  $V\in \ideal_{V(0,\tilde{\alpha})}$.
\end{proof}

\subsection{Computations of modified dimensions}
 In the proof of Theorem \ref{T:Semisimple} we will show that the ideal $\ideal$ contains  $V(n,\tilde{\alpha})$ such that $0\leq n\leq\rt'-2$ and $\bar\alpha\notin \{\bar0, \bar{\frac\rt2}\}$.  
In the next lemma we will compute the modified quantum dimension of such modules.  Recall that the modified quantum dimension is defined to be $\qd(V)=\qt_V(\Id_V)$ for $V\in \ideal$. 
\begin{lemma}\label{L:CompOfdq}
If $V(n,\tilde{\alpha})\in \ideal$ for $0\leq n\leq\rt'-1$ and $\bar\alpha\notin \{\bar0, \bar{\frac\rt2}\}$ then the right trace $\{\qt_V\}_{V\in \ideal}$ can be normalized so
\begin{equation}\label{E:FromulaQDVnalpha}
\qd(V(n,\tilde{\alpha}))=\frac{\{n+1\}}{\{1\}\{\tilde\alpha\}\{\tilde\alpha+n+1\}}
\end{equation}
where $\{z\}=q^z-q^{-z}$.
\end{lemma}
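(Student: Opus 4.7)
My plan is to combine the partial trace property of the right trace with the Decomposition Lemma \ref{L:DecompV0Vn} to derive a linear recursion among the modified dimensions, and then induct on $n$ using an explicit computation at the base case $n=0$ to fix the normalization.

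First, I would observe that for every typical module $V(n,\tilde\gamma)$ the quantum superdimension $\tev_{V(n,\tilde\gamma)}\circ \coev_{V(n,\tilde\gamma)}$ vanishes: a direct computation in the basis of Theorem \ref{T:Structure} shows that the supertrace of the pivotal element $K_2^{-2}$ factors through $\sum_{\rho\in\{0,1\}}(-1)^\rho=0$. Applying the partial trace property (\ref{I:Prop2Trace}) of $\qt$ to $f=\Id_{V(0,\tilde\beta)\otimes V(n,\tilde\gamma)}$ then yields $\qt(\Id)=0$. On the other hand, the Decomposition Lemma writes this tensor product as a direct sum of four simple summands, and by additivity $\qt(\Id)$ equals a signed sum of the modified dimensions of these summands; the sign of each contribution is dictated by the parity of the corresponding highest weight vector built in the proof of Lemma \ref{L:DecompV0Vn}, via super-cyclicity of $\qt$ applied to the (possibly odd) isomorphism from the canonical simple module onto its embedded copy. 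Inspecting the parities of $v_1,v_2,v_3,v_4$ there ($v_1$ and $v_4$ are even, $v_2$ and $v_3$ are odd), one obtains the recursion
\begin{equation*}
  \qd(V(n,\tilde\alpha))-\qd(V(n{+}1,\tilde\alpha))-\qd(V(n{-}1,\tilde\alpha{+}1))+\qd(V(n,\tilde\alpha{+}1))=0
\end{equation*}
with the convention $\qd(V(-1,\cdot))=0$; this solves for $\qd(V(n+1,\tilde\alpha))$ in terms of modified dimensions with smaller first index.

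Second, I would fix the base case. Since the right trace on $\ideal$ is unique up to a non-zero scalar (Theorem \ref{T:rigthambiTrace}), the function $\tilde\alpha\mapsto \qd(V(0,\tilde\alpha))$ is determined up to one overall constant. To identify its $\tilde\alpha$-dependence I would use the decomposition $V(0,\tilde\alpha)\otimes V(0,\tilde\alpha)^{*}=V_1(\tilde\alpha)\oplus V_2(\tilde\alpha)$ from Proposition \ref{P:DecVotimesV*}, in which $V_2(\tilde\alpha)\cong V(1,\widetilde{-1})$ is independent of $\tilde\alpha$. Applying the partial trace to the idempotent $i_2p_2$ onto $V_2(\tilde\alpha)$ produces a relation
\begin{equation*}
  \qd(V(1,\widetilde{-1}))=\mu(\tilde\alpha)\,\qd(V(0,\tilde\alpha))
\end{equation*}
for a scalar $\mu(\tilde\alpha)$ which can be computed directly from the explicit generator $u_0=[\tilde\alpha]w_{0,0}\otimes w_{1,0}^{*}+q^{-\tilde\alpha}[\tilde\alpha+1]w_{1,0}\otimes w_{0,0}^{*}$ and the formulas for $\tev$ and $\coev$; this computation shows $\mu(\tilde\alpha)$ is proportional to $\{\tilde\alpha\}\{\tilde\alpha+1\}$. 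Rescaling the trace then gives $\qd(V(0,\tilde\alpha))=\frac{1}{\{\tilde\alpha\}\{\tilde\alpha+1\}}$, the $n=0$ case of the target formula.

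Finally, I would induct on $n$: assuming the formula for $V(k,\tilde\gamma)$ with $k\le n$, the recursion uniquely determines $\qd(V(n+1,\tilde\alpha))$, and a short verification using the $q$-number identities $\{x-1\}+\{x+1\}=(q+q^{-1})\{x\}$ and $\{x+1\}\{y+1\}-\{x\}\{y\}=\{1\}\{x+y+1\}$ shows this equals $\frac{\{n+2\}}{\{1\}\{\tilde\alpha\}\{\tilde\alpha+n+2\}}$. The main obstacle is the base case: pinning down the functional form of $\qd(V(0,\tilde\alpha))$ requires the explicit evaluation of $\mu(\tilde\alpha)$ inside the non-semi-simple module $V(0,\tilde\alpha)\otimes V(0,\tilde\alpha)^{*}$, and extracting the correct signs in the recursion demands careful bookkeeping of the parities of the four highest weight vectors in Lemma \ref{L:DecompV0Vn}. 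Once these signs are settled, the inductive step reduces to routine $q$-arithmetic.
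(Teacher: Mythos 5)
Your route is genuinely different from the paper's. The paper normalizes $\qd(V(0,\tilde{\frac13}))$ and then obtains $\qd(V(n,\tilde\alpha))$ in one step as a ratio of two ``open Hopf link'' scalars $S'_{\alpha,\frac13}/S'_{\frac13,\alpha}$, defined by partial traces of $\psi_{\frac13,\alpha}\psi_{\alpha,\frac13}$ and $\psi_{\alpha,\frac13}\psi_{\frac13,\alpha}$ and evaluated from the diagonal part of the truncated $R$-matrix of Lemma \ref{comm}; no induction and no fusion rules are used. Your proof instead uses only the vanishing of the categorical dimension of $V(n,\tilde\gamma)$ together with the Decomposition Lemma to produce a four-term recursion, plus a base case from $V(0,\tilde\alpha)\otimes V(0,\tilde\alpha)^{*}$. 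This has the appeal of avoiding the braiding and the unrolled algebra entirely, and your signed recursion and the $q$-identities in the inductive step are indeed consistent with \eqref{E:FromulaQDVnalpha}.

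There is, however, a genuine gap exactly where the signs enter. Writing $\Id_{U\otimes W}=\sum_k i_kp_k$ and applying cyclicity gives $\qt_{U\otimes W}(\Id)=\sum_k\qt_{W_k}(\Id_{W_k})=\sum_k\qd(W_k)$ with \emph{all plus signs}; the minus signs you need can only come from identifying the summands generated by the odd highest weight vectors $v_2,v_3$ with the standard modules, i.e.\ from the claim that a parity shift negates the modified dimension. This does not follow from ``super-cyclicity'': the trace axioms in Subsection \ref{SS:trace} carry no Koszul signs, and naive cyclicity applied to any isomorphism (odd or even) would make $\qd$ constant on isomorphism classes, yielding the all-plus recursion --- which is \emph{inconsistent} with \eqref{E:FromulaQDVnalpha} (already at $n=0$ the all-plus sum equals $2\{2\}\{\tilde\alpha+1\}$ over the common denominator rather than $0$). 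So you must actually prove that the summands with odd generators contribute with a minus sign, e.g.\ by tracking the $(-1)^{\p f\p v}$ factor in $\tev$ through $\operatorname{ptr}(i_kp_k)$; this is the real content of your recursion. A second, smaller gap is the base case: computing $\mu(\tilde\alpha)$ requires the projection $p_2$ onto $V_2(\tilde\alpha)$ (a dual basis inside $V(0,\tilde\alpha)\otimes V(0,\tilde\alpha)^{*}$), not just the generator $u_0$, and you must also check $\mu(\tilde\alpha)\neq0$, since otherwise the relation $\qd(V_2(\tilde\alpha))=\mu(\tilde\alpha)\qd(V(0,\tilde\alpha))$ carries no information. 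With these two points supplied, your argument would constitute a valid braiding-free alternative to the paper's proof.
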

\begin{proof}
In the proof of Lemma \ref{Generating module} we showed that $V(0,\tilde{\frac13})\in \ideal$.   The trace is unique up to a global scalar and  we choose a normalization so that 
$$
\qd(V(0,\tilde{\frac13}))=\qt_{V(0,\tilde{\frac13})}(\Id_{V(0,\tilde{\frac13})})=\frac{1}{\{\tilde{\frac13}\}\{\tilde{\frac13}+1\}}.
$$

Let $V(n,\tilde{\alpha})\in \ideal$ for $0\leq n\leq\rt'-1$ and $\bar\alpha\notin \{\bar0, \bar{\frac\rt2}\}$.  Fix $\alpha \in \C$ such that $\tilde{\alpha}=\alpha $ modulo $\rt\Z$.  
For $U,W\in \cat$ and $f\in \End_\cat(U\otimes W)$ define 
$$
\operatorname{ptr}^W(f) =\bp{(\Id_U \otimes \tev_W)(f\otimes \Id_{W^*})(\Id_U \otimes \coev_W)}.
$$
  Recall the isomorphisms $\psi_{\alpha,\frac13}$ and $\psi_{\frac13, \alpha}$ of Lemma \ref{comm}.  
  
Let  $S'_{\alpha,\frac13}$ and $ S'_{\frac13, \alpha}$ be the complex numbers defined by the following equations:  
$$
 S'_{\frac13, \alpha}\Id_{V(n,\tilde{\alpha})}=\operatorname{ptr}^{V(0,\tilde{\frac13})}\left(\psi_{\frac13, \alpha}\psi_{\alpha,\frac13}\right),\;\;\;
 S'_{\alpha,\frac13}\Id_{V(0,\tilde{\frac13})}=\operatorname{ptr}^{V(n,\tilde{\alpha})}\left(\psi_{\alpha,\frac13}\psi_{\frac13, \alpha}\right).
$$
Now from properties of the modified trace we have 
\begin{align*}
\qd(V(n,\tilde{\alpha})) S'_{\frac13, \alpha}&= \qt_{V(n,\tilde{\alpha})} \left(\operatorname{ptr}^{V(0,\tilde{\frac13})}\left(\psi_{\frac13, \alpha}\psi_{\alpha,\frac13}\right)\right)\\
&=\qt_{V(n,\tilde{\alpha})\otimes V(0,\tilde{\frac13})} \left(\psi_{\frac13, \alpha}\psi_{\alpha,\frac13}\right)\\
&=\qt_{V(0,\tilde{\frac13})\otimes V(n,\tilde{\alpha})} \left(\psi_{\alpha,\frac13}\psi_{\frac13, \alpha}\right)\\
&= \qt_{V(0,\tilde{\frac13})} \left(\operatorname{ptr}^{V(n,\tilde{\alpha})}\left(\psi_{\alpha,\frac13}\psi_{\frac13, \alpha}\right)\right)\\
&=\qd\Big(V\big(0,{\tilde{\frac13}}\big)\Big)S'_{ \alpha, \frac13}.
\end{align*}
Now, if $S'_{\frac13, \alpha}\neq 0$ (which we will show below) then 
\begin{equation}\label{E:ModQuDimEquation}
\qd(V(n,\tilde{\alpha})) =\frac{S'_{ \alpha, \frac13}}{ \left(\{\tilde{\frac13}\}\{\tilde{\frac13}+1\}\right)S'_{\frac13, \alpha}}.
\end{equation}
Thus, to find a formula for $\qd(V(n,\tilde{\alpha}))$ it suffices to compute $S'_{\frac13, \alpha}$ and $S'_{\alpha, \frac13}$. 

We now compute $S'_{\frac13, \alpha}$.  Let  $\{w_{\rho, \sigma ,p}\}_{\rho, \sigma\in\{0,1\}, p\in\{0,...n-1\}}$ and  $\{w'_{\rho', \sigma' ,0}\}_{\rho', \sigma'\in\{0,1\}}$ is the weight bases of $V(n,\tilde\alpha)$ and $V(0,\tilde{\frac13})$, respectively.    Any endomorphism of $V(n,\tilde{\alpha})$ maps the highest weight vector $w_{0,0,0} $ of $V(n,\tilde{\alpha})$ to a multiple of itself.  Since $V(n,\tilde{\alpha})$ is simple it is enough to compute this coefficient, in other words 
\begin{equation}\label{E:S'13alpha}
\operatorname{ptr}^{V(0,\tilde{\frac13})}\left(\psi_{\frac13, \alpha}\psi_{\alpha,\frac13}\right)(w_{0,0,0})=S'_{\frac13, \alpha}w_{0,0,0}.
\end{equation}
 Now  $\psi_{\frac13, \alpha}$ and $\psi_{\alpha,\frac13}$ are determined by the action of the $R$-matrix $\check{R}\mathcal{K}$ on the $\UqH$-modules $V^H(n,\alpha)$ and $V^H(0,{\frac13})$.  Since we are taking a partial trace only diagonal quantities of this action contribute when writing on the weight vector basis $\{w^{\frac13}_{\rho, \sigma ,0}\}_{\rho, \sigma\in\{0,1\}}$ of $V^H(0,{\frac13})$ given above.  So it is enough to know the values of  $\psi_{\alpha,\frac13}(w_{0,0,0}\otimes w'_{\rho', \sigma' ,0})$ and $\psi_{\frac13,\alpha}( w'_{\rho', \sigma' ,0}\otimes w_{0,0,0})$ which are computed in Lemma \ref{comm}.   Note, the terms $c_ix_i\otimes y_i$ in Equation \eqref{E:psiww000} are not diagonal and so do not contribute.  Thus, evaluating the left side of Equation \eqref{E:S'13alpha} we have
 \begin{align*}
 S'_{\frac13, \alpha}
&=
\sum_{\rho', \sigma'=0}^1q^{(-2n-4\alpha)(\frac13+\sigma')-2\alpha(\rho'-\sigma')} 
  (-1)^{\rho'+ \sigma'}{w'}_{\rho', \sigma' ,0}^*\left(K_2^{-2} w'_{\rho', \sigma' ,0}\right)\\
&=\sum_{\rho', \sigma'=0}^1q^{(-2n-4\alpha)(\frac13+\sigma')-2\alpha(\rho'-\sigma')-2(\tilde{\frac13 }+\sigma')} (-1)^{\rho'+ \sigma'}
 \\
&=q^{(-\frac23 -1)(2\alpha+n+1)}\left(q^{2\alpha +n +1}-q^{-n-1}-q^{n+1}+q^{-2\alpha -n-1}\right)\\
&= q^{-(\frac23 +1)(2\alpha+n+1)}\{\alpha\}\{\alpha + n +1\}.
\end{align*}
Similarly, 
$$
S'_{ \alpha, \frac13}=q^{-(2\alpha+n+1)(\frac23+1)}\frac{\{n+1\}}{\{1\}}\{1/3\}\{4/3\}.
$$
Finally, since $\{\tilde{x}\}=\{{x}\}$ for any $x\in \C$ then  Equation \eqref{E:ModQuDimEquation} implies the result.  
\end{proof}

\section{The relative $\C/\Z$-spherical category}
\subsection{Purification of $\cat$}\label{Purification}
The category $\cat$ that we've constructed still needs to be modified in order to obtain a relative $G$-spherical category.  One of the main problem is that there are an infinite number of non-isomorphic simple objects in each graded piece of $\cat$.   In order to obtain a finite number of objects in each grading, we will  ``purify'' the category using the modified trace.  This will have the effect of  removing all modules outside the alcove.
This generalizes the well known purification of a category discussed in Chapter XI of \cite{Tu}.

Let $V,W\in\ideal=\cat\setminus \{\C\}$.   A morphism $f\in \Hom_{\cat} (V,W) $ is called \emph{negligible} with respect to the right trace $\qt$ if
 $$\qt_W(f\circ g)=\qt_V(g\circ f)=0$$
 for all $ g\in \Hom_{\cat}(W,V)$.
Denote $\Negl(V,W)$ as the set of negligible morphisms from $V$ to $W$.  The set $\Negl(V,W) $ is actually a sub-vector space of $\Hom_{\cat}(W,V)$.    Thus, we can take the quotient and obtain a $\C$-vector space $\Hom_{\cat}(V,W)/\Negl(V,W).$
  We set $\Negl(V,\C)=\Negl(\C,V)=0$ for any $V\in \cat$.  

We describe a purification process of $\cat$ which will produce a category $\cat^N$ where all negligible morphism are zero.  We define a new pivotal $\C$-category $\cat^N$ whose objects are the same as in $\cat$.   The set of morphisms between two objects $V$ and $W$ of $\cat^N$ is
$$\Hom_{{\cat}^N}(V,W)=\Hom_{\cat}(V,W)/\Negl(V,W).$$
The composition, tensor product, pivotal structure and grading in $\cat^N$ is induced from $\cat$:

\begin{lemma}\label{L:catNGraded}
The category $\cat^N$ is a pivotal $\C$-category 
with a $\C/\Z$-grading induced from the grading of $\cat$.  
\end{lemma}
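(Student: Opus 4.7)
The plan is to verify that the ``quotienting out negligible morphisms'' construction preserves all the structure in question, by showing that $\Negl$ is a tensor ideal in $\cat$ compatible with the pivotal structure and the grading.

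First I would check linearity: because $\qt$ is $\C$-linear and composition in $\cat$ is $\C$-bilinear, the set $\Negl(V,W)\subseteq\Hom_\cat(V,W)$ is a $\C$-subspace, so the quotient $\Hom_{\cat^N}(V,W)=\Hom_\cat(V,W)/\Negl(V,W)$ is a $\C$-vector space, and each composition map in $\cat^N$ is bilinear on representatives. Next I would show that $\Negl$ is a two-sided ideal for composition: if $f\in\Negl(V,W)$ and $k\in\Hom_\cat(U,V)$, $h\in\Hom_\cat(W,W')$, then for any $g:W'\to U$ one computes $\qt\bigl((h f k) g\bigr)=\qt\bigl(f(k g h)\bigr)=0$ since $k g h:W\to V$ and $f$ is negligible; the cyclicity property of $\qt$ (axiom~(1) of a right trace) gives exactly what is needed. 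In particular composition descends to the quotient, and associativity is automatic.

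The main technical point is to show that $\Negl$ is stable under tensoring with identities, so that $\otimes$ descends to $\cat^N$. For $f\in\Negl(V,W)$ and $U\in\cat$, I would use the partial trace property~(\ref{I:Prop2Trace}) of the right trace: given $g\in\Hom_\cat(W\otimes U,V\otimes U)$, the morphism $\operatorname{ptr}^U((f\otimes\Id_U)\circ g)$ factors as $f\circ\operatorname{ptr}^U(g)$, and applying $\qt_W$ gives zero by negligibility of $f$; hence $f\otimes\Id_U\in\Negl(V\otimes U,W\otimes U)$ by axiom~(\ref{I:Prop2Trace}). The symmetric case $\Id_U\otimes f$ requires a bit more care since only a right trace is assumed; here I would use the pivotal structure to reinterpret the left partial trace as a right partial trace on the dual (replacing $\Id_U\otimes f$ by its conjugate under $\coev_U,\tev_U$), reducing to the previous case. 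Once $\Negl$ is known to be a tensor ideal, the tensor product, unit constraints, and associator descend unambiguously to $\cat^N$. I expect this left/right symmetry argument to be the main obstacle, as it is where the $\cat^N$ construction truly uses the pivotal structure.

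The pivotal structure descends trivially: the duals $V^*$ are the same objects, and the duality morphisms $\coev_V,\ev_V,\tcoev_V,\tev_V$ are distinguished morphisms of $\cat$ whose images in $\cat^N$ satisfy the same compatibility equations (which are equalities of morphisms in $\cat$). The convention $\Negl(V,\unit)=\Negl(\unit,V)=0$ ensures no issue arises with $\End_{\cat^N}(\unit)$. Finally, the $\C/\Z$-grading is inherited for free: for $V\in\cat_g$, $W\in\cat_{g'}$ with $g\neq g'$ the axioms of a $\Gr$-graded category already force $\Hom_\cat(V,W)=0$, whence $\Hom_{\cat^N}(V,W)=0$; the other four axioms of Subsection~\ref{SS:GSpher} hold in $\cat^N$ because they hold in $\cat$ and concern only objects and tensor products, which are unchanged. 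This yields the stated $\C/\Z$-graded pivotal $\C$-category structure on $\cat^N$.
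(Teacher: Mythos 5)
Your route is the same as the paper's, but you supply the verification the paper omits. The paper's own proof is essentially one line: it introduces the quotient functor $\mathcal F:\cat\to\cat^N$ (identity on objects, $f\mapsto[f]$) and declares that the $\C$-linear tensor structure, the duality morphisms and the $\C/\Z$-grading are transported along $\mathcal F$, without checking that this transport is well defined. What you add --- and what is genuinely needed for $\mathcal F$ to exist as a tensor functor --- is the proof that $\Negl$ is a tensor ideal. Your linearity check, your closure under pre- and post-composition via the cyclicity axiom of the right trace, and your closure under right tensoring via the identity $\operatorname{ptr}^U\bigl((f\otimes\Id_U)\circ g\bigr)=f\circ\operatorname{ptr}^U(g)$ together with axiom~(2) of a right trace are all correct, and your treatment of the pivotal structure and the grading at the end coincides with the paper's.

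The one step that remains unproved is exactly the one you flag as the main obstacle: that $\Id_U\otimes f$ is negligible when $f$ is. Your proposed fix --- ``reinterpret the left partial trace as a right partial trace on the dual by conjugating with $\coev_U,\tev_U$'' --- does not go through as stated: $\cat$ is pivotal but not braided, so there is no morphism moving the factor $U$ from the left of the tensor product to the right, and the right partial-trace axiom only lets you trace out the \emph{rightmost} factor of $U\otimes V$, which produces $\qt_U(\operatorname{ptr}^V(\cdot))$ rather than anything of the form $\qt_W(f\circ(\cdot))$. What this step really requires is that the trace be two-sided (i.e.\ also satisfy the left partial-trace axiom), or some other argument special to $\cat$; neither you nor the paper establishes this, so you have correctly located a genuine lacuna in the construction rather than introduced a new one. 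A smaller point: well-definedness of composition with morphisms into or out of $\unit$ (where $\Negl$ is declared to be $0$) requires that negligible morphisms annihilate all maps from and to $\unit$, which is a further unverified claim hiding behind the convention $\Negl(V,\unit)=\Negl(\unit,V)=0$; your remark about $\End_{\cat^N}(\unit)$ does not cover it.
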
 
\begin{proof}
First, we will show that $\cat^N$ is a pivotal $\C$-category. 
There is an obvious functor $\mathcal{F}: \cat \to \cat^N$ which is the identity on objects and maps a morphism to its class modulo negligible morphisms:
\begin{enumerate}
\item $\mathcal{F}(A)=A, \forall A \in Ob(\cat)$,
\item $\mathcal{F}(f)=[f] \in \Hom_{\cat^N}(A,B), \forall f \in \Hom_{\cat}(A,B)$. 
\end{enumerate}
Using this functor we can induce the tensor $\C$-linear structure of $\cat$ onto $\cat^N$.   We also define the dual structure on $\cat^N$ as the one coming  from $\cat$, via the functor $\mathcal{F}$. 
 Since the dualities morphisms in $\cat$ satisfy the compatibility conditions  for a pivotal structure, 
 then the corresponding dualities under $\mathcal{F}$ will also satisfy these compatibility conditions in $\cat^N$. 

Recall the definition of a $\Gr$-graded category given in Subsection \ref{SS:GSpher}.  
From Lemma \ref{L:catgraded}, we know that $\cat$ is $\C/\Z$-graded.  For any $g\in \C/\Z$, define $\cat^N_g:=\mathcal{F}(\cat_g)$.  It is easy to see this gives a $\C/\Z$-graded on $\cat^N$.    
\end{proof}
We will use the same notation for the object $V(n,\tilde{\alpha})$  of $\cat$ and the corresponding object  in $\cat^N$.

\begin{lemma} \label{ne}
If $W\in\cat$ such that $W$ is simple and $\qd(W)=0$ then every morphism to or from $W$ is negligible.   
\end{lemma}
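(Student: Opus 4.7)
The plan is a short argument based on the cyclicity of the right trace together with the simplicity of $W$. Let $f\in\Hom_\cat(V,W)$ and let $g\in\Hom_\cat(W,V)$ be arbitrary. Since $W$ is simple, $\End_\cat(W)=\C\,\Id_W$, so the composition $f\circ g\in\End_\cat(W)$ equals $\lambda_{f,g}\Id_W$ for some scalar $\lambda_{f,g}\in\C$. Applying the right trace then gives
$$\qt_W(f\circ g)=\lambda_{f,g}\,\qt_W(\Id_W)=\lambda_{f,g}\,\qd(W)=0$$
by the hypothesis $\qd(W)=0$. The cyclicity property of the right trace (recalled in Subsection \ref{SS:trace}) then also yields
$$\qt_V(g\circ f)=\qt_W(f\circ g)=0.$$
As $g$ was arbitrary this says exactly that $f$ is negligible. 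For a morphism $h:W\to V$ the same identities appear with the roles of the two compositions swapped, so $h$ is negligible as well.

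The only subtlety to address is that the right trace is only defined on the ideal $\ideal=\cat\setminus\{\C\}$, so one should check what happens if $V=\C$. However $\qd(W)$ is defined, so $W\in\ideal$, and in particular $W\not\cong\C$; since both $W$ and $\C$ are simple this forces $\Hom_\cat(\C,W)=\Hom_\cat(W,\C)=0$, so there is no non-zero morphism to or from $\C$ and the lemma is vacuous in that case. Thus we may assume $V\in\ideal$, where the traces $\qt_V$ and $\qt_W$ are both defined and the argument above applies directly.

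I do not expect any real obstacle here: the whole content is the scalar identity $f\circ g=\lambda_{f,g}\Id_W$ coming from simplicity, combined with the two standard axioms of a right trace (cyclicity, and $\qt_W(\Id_W)=\qd(W)$). No computation inside $\Uq$ is needed.
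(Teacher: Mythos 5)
Your proof is correct and follows essentially the same route as the paper's: simplicity of $W$ reduces $f\circ g$ to a scalar multiple of $\Id_W$, and linearity of $\qt_W$ together with $\qd(W)=\qt_W(\Id_W)=0$ gives the vanishing, with cyclicity handling the other composition. Your explicit treatment of the edge case $V=\C$ (where the trace is undefined but the Hom-spaces vanish) is a small extra care the paper leaves implicit, but the argument is the same.
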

\begin{proof}
It suffices to prove that $\qt_W(h)=0 $ for any $h\in \End_{\cat}(W)$.  
This will imply that if $V\in \cat$ and $f\in \Hom_{\cat}(V,W)$ then $$\qt_W(f\circ g)=0$$ for any $g\in \End_{\cat}(W,V).$ Thus, $f$ is negligible.    A similar statement holds for $f\in \Hom_{\cat}(W,V)$. 

To prove the first statement, let  $h\in \End_{\cat}(W)$.  
Since $W$ is simple, $\End_{\cat}(W)=\C \Id_W$ and we will define the scalar $<h>\in \C$ as the solution to the equation $h=<h> \Id_W$.  
But $\qd(W)=0$, in other words $\qt_W(\Id_W)=0$.  Thus, 
$$\qt_W(h)=\qt_W(<h> \Id_W)=<h> \qt_W(\Id_W)=0.$$
\end{proof}

\begin{lemma} 
Let $V$ and $W$ be objects in $\cat$ such that $W$ is simple and $\qd(W)=0$.   Then $V,W$ are also object in $\cat^N$ with the property that  the direct sum $V\oplus W$  is isomorphic to $V$ in $\cat^N$, in other words $V\oplus W\simeq_{\cat^N} V$.
\end{lemma}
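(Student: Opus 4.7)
The plan is to construct an explicit isomorphism in $\cat^N$ using the canonical inclusion and projection associated to the direct sum in $\cat$. Since $\cat^N$ has the same objects as $\cat$, the objects $V$, $W$, and $V\oplus W$ all lie in $\cat^N$; only the morphism spaces change. So the content of the statement is the isomorphism $V\oplus W\cong V$ in $\cat^N$.

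First I would fix the standard retract data for $V\oplus W$ in $\cat$: inclusions $\alpha_V\colon V\to V\oplus W$, $\alpha_W\colon W\to V\oplus W$ and projections $\beta_V\colon V\oplus W\to V$, $\beta_W\colon V\oplus W\to W$ satisfying $\beta_V\alpha_V=\Id_V$, $\beta_W\alpha_W=\Id_W$, $\beta_V\alpha_W=0$, $\beta_W\alpha_V=0$, and the completeness relation $\alpha_V\beta_V+\alpha_W\beta_W=\Id_{V\oplus W}$. I claim the classes $[\alpha_V]$ and $[\beta_V]$ in $\cat^N$ are mutually inverse. One direction, $[\beta_V]\circ[\alpha_V]=[\Id_V]$, is immediate from the identity in $\cat$.

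The other direction is where the hypothesis enters. By the completeness relation we have
\[
\alpha_V\beta_V=\Id_{V\oplus W}-\alpha_W\beta_W \quad\text{in }\cat,
\]
so it suffices to show $\alpha_W\beta_W$ is negligible as an endomorphism of $V\oplus W$. Since $W$ is simple with $\qd(W)=0$, Lemma~\ref{ne} tells us every morphism into $W$ is negligible; in particular $\beta_W\in\Negl(V\oplus W,W)$. I would then invoke the easy general fact that the composition of a negligible morphism with an arbitrary morphism is again negligible: given any $h\colon V\oplus W\to V\oplus W$, the trace identity
\[
\qt_{V\oplus W}\bigl(h\,\alpha_W\beta_W\bigr)=\qt_{V\oplus W}\bigl((h\alpha_W)\beta_W\bigr)=\qt_W\bigl(\beta_W\,h\,\alpha_W\bigr)=0
\]
holds by cyclicity of $\qt$ and negligibility of $\beta_W$ (applied to the morphism $\beta_W h\alpha_W\colon W\to W$, which vanishes under $\qt_W$ since it factors through the negligible $\beta_W$, or more directly because $\qt_W$ vanishes on all of $\End_\cat(W)$ by the proof of Lemma~\ref{ne}). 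Hence $[\alpha_W\beta_W]=0$ in $\cat^N$, giving $[\alpha_V]\circ[\beta_V]=[\Id_{V\oplus W}]$.

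The main (minor) obstacle is just making sure the negligibility argument is stated in the right direction: one must check that the \emph{two-sided} trace vanishing in the definition of $\Negl$ is preserved under composition on either side, which follows from cyclicity together with the fact that $\qt_W$ vanishes identically on $\End_\cat(W)=\C\,\Id_W$ when $\qd(W)=0$. Once this is in place, the two computed equalities $[\beta_V][\alpha_V]=[\Id_V]$ and $[\alpha_V][\beta_V]=[\Id_{V\oplus W}]$ exhibit $[\alpha_V]$ as the desired isomorphism $V\xrightarrow{\ \sim\ } V\oplus W$ in $\cat^N$.
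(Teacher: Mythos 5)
Your proof is correct and follows essentially the same route as the paper: both exhibit the inclusion and projection $V\rightleftarrows V\oplus W$ as mutually inverse in $\cat^N$ by writing $\Id_{V\oplus W}=\alpha_V\beta_V+\alpha_W\beta_W$ and killing the summand factoring through $W$ via Lemma~\ref{ne}. If anything, your cyclicity computation showing that $\alpha_W\beta_W\in\End_\cat(V\oplus W)$ is itself negligible is slightly more careful than the paper, which invokes Lemma~\ref{ne} directly even though $i_2\circ pr_2$ is not literally a morphism to or from $W$.
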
 
\begin{proof}
Let  $i_1 : V\rightarrow V \oplus W$ and $pr_1:V \oplus     W\rightarrow V$ be the  injection and projection morphisms with $pr_1\circ i_1=\Id_V$ in $\cat$.  This gives the relation   $pr_1\circ i_1=\Id_V$ in $\cat^N$.  
We want to show $i_1\circ pr_1=\Id_{V \oplus W}$ in $\cat^N$.  To do this consider the other inclusion and projection morphisms   
$i_2 : W\rightarrow V \oplus W$ and $pr_2:V \oplus     W\rightarrow W$ in $\cat$.
Then by definition  $\Id_{V \oplus W}=i_1\circ pr_1+i_2\circ pr_2$ in $\cat$.  But from Lemma \ref{ne} we have $i_2\circ pr_2$ is negligible.   Thus, in $\cat^N$ we have 
$\Id_{V \oplus W}=i_1\circ pr_1$ and so $i_1$ is the inverse of  $ pr_1$.  
\end{proof}
\begin{corollary} \label{L:negl}
 Let $\tilde{\gamma}\in \C/\rt\Z$ such that $\bar{\gamma}\notin \{\bar0,\bar{\frac\rt2}\}$.  If $V(\rt'-1,\tilde{\gamma})\in \ideal$ then for any $V\in \cat^N$ we have 
$V \oplus V(\rt'-1,\tilde{\gamma}) \simeq_{\cat^N} V.$ 
 In particular, $V(\rt'-1,\tilde{\gamma}) \simeq_{\cat^N} \{0\}$.  
\end{corollary}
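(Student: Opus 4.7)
The plan is to apply the previous lemma directly; the corollary is essentially the combination of that lemma with the computation of $\qd(V(\rt'-1,\tilde{\gamma}))$ via Lemma \ref{L:CompOfdq}. There are essentially three things to verify.

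First, I would observe that $V(\rt'-1,\tilde{\gamma})$ is simple as an object of $\cat$: by the Remark after the Proposition on simplicity, since $\bar{\gamma}\notin\{\bar 0,\bar{\frac\rt2}\}$, the typical module $V(\rt'-1,\tilde{\gamma})$ is simple. Combined with the hypothesis $V(\rt'-1,\tilde{\gamma})\in\ideal$, Lemma \ref{L:CompOfdq} applies and gives
\[
\qd(V(\rt'-1,\tilde{\gamma}))=\frac{\{\rt'\}}{\{1\}\{\tilde\gamma\}\{\tilde\gamma+\rt'\}}.
\]

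Second, I would check that the numerator $\{\rt'\}=q^{\rt'}-q^{-\rt'}$ vanishes. Recall $q=e^{2\pi\sqrt{-1}/\rt}$ and $\rt'=\rt$ if $\rt$ is odd, $\rt'=\rt/2$ if $\rt$ is even. In the odd case $q^{\rt'}=q^\rt=1$, so $\{\rt'\}=0$; in the even case $q^{\rt'}=q^{\rt/2}=e^{\pi\sqrt{-1}}=-1=q^{-\rt'}$, so again $\{\rt'\}=0$. (The denominator is nonzero because $\bar\gamma,\bar\gamma+\bar{\rt'}\notin\{\bar 0,\bar{\frac\rt2}\}$; this last fact follows from $\bar\gamma\notin\{\bar 0,\bar{\frac\rt2}\}$ together with the definition of $\rt'$, so that $\bar{\rt'}\in\{\bar 0,\bar{\frac\rt2}\}$ in $\C/\Z$.) Hence $\qd(V(\rt'-1,\tilde{\gamma}))=0$.

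Third, having a simple object with vanishing modified dimension, the preceding lemma applies with $W=V(\rt'-1,\tilde{\gamma})$, yielding $V\oplus V(\rt'-1,\tilde{\gamma})\simeq_{\cat^N}V$ for every $V\in\cat^N$. Taking $V$ to be the zero object gives the ``in particular'' statement $V(\rt'-1,\tilde{\gamma})\simeq_{\cat^N}\{0\}$.

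The only potential subtlety is checking that the denominator of the dimension formula does not also vanish (otherwise the formula would be indeterminate rather than zero); this is exactly the reason for the hypothesis $\bar{\gamma}\notin\{\bar 0,\bar{\frac\rt2}\}$, since $\{\tilde\gamma\}=0$ iff $\bar\gamma\in\{\bar 0,\bar{\frac\rt2}\}$ and similarly for $\{\tilde\gamma+\rt'\}$. Apart from this bookkeeping, the argument is immediate.
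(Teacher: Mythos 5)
Your proof is correct and follows exactly the paper's route: apply Lemma \ref{L:CompOfdq} to see that $\qd(V(\rt'-1,\tilde{\gamma}))=\{\rt'\}/(\{1\}\{\tilde\gamma\}\{\tilde\gamma+\rt'\})=0$, then invoke the preceding lemma on simple objects of vanishing modified dimension. The extra verifications you supply (that $\{\rt'\}=0$ in both parity cases and that the denominator does not vanish) are correct and merely make explicit what the paper leaves implicit.
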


\begin{proof}  From Lemma \ref{L:CompOfdq}, we have that $\qd(V(\rt'-1,\tilde{\gamma}))=0$.   Applying the previous lemma we conclude the isomorphism. 
\end{proof}

\subsection{Generically finitely semi-simple}
\begin{lemma}\label{L:simpleN}
Let $V$ be a simple object in $\cat$.  As an object of $\cat^N$, $V$ is either simple or $V\simeq_{\cat^N} \{0\}$.
\end{lemma}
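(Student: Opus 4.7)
The strategy I would use is to exploit the fact that the endomorphism algebra of a simple object of $\cat$ is one-dimensional, so the negligibility quotient $\End_{\cat^N}(V) = \End_\cat(V)/\Negl(V,V)$ is either $\C\Id_V$ (in which case $V$ is simple in $\cat^N$) or zero (in which case I would argue that $V$ is isomorphic to the zero object in $\cat^N$). The dichotomy between the two cases will be controlled by whether $\qd(V)$ vanishes.

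I would begin with the boundary case $V = \C$: by the convention $\Negl(\C, V) = \Negl(V, \C) = 0$ set in Section~\ref{Purification}, the endomorphism space $\End_{\cat^N}(\C)$ remains $\C\Id_\C$, so $\C$ is simple in $\cat^N$.

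For every other simple $V$, Lemma~\ref{Generating module} places $V$ in $\ideal$, so the modified dimension $\qd(V) = \qt_V(\Id_V)$ is defined. If $\qd(V) \neq 0$, I would take $g = \Id_V$ in the definition of negligibility to conclude $\Id_V \notin \Negl(V,V)$; since $\Negl(V,V)$ is a subspace of the one-dimensional $\C\Id_V$ that does not contain $\Id_V$, it must vanish, and hence $V$ is simple in $\cat^N$. If $\qd(V) = 0$, Lemma~\ref{ne} implies that every morphism to or from $V$ is negligible; in particular $[\Id_V] = 0$ in $\cat^N$. I would then use the zero morphisms $V \to 0$ and $0 \to V$, which exist in $\cat$ because the zero module is trivially a retract of any object, as inverse isomorphisms in $\cat^N$: their two compositions are $0 = [\Id_V]$ and $0 = \Id_0$. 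This would yield $V \simeq_{\cat^N} \{0\}$.

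I do not anticipate a serious obstacle: the essential content is already packaged in Lemma~\ref{ne}, and the rest is the elementary observation that a one-dimensional vector space has only two quotients. The only mild subtlety is the boundary case $V = \C$, where $\qd$ is undefined, and that is precisely why the convention $\Negl(\C, -) = 0$ was introduced before the quotient construction.
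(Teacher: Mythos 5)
Your proposal is correct and follows essentially the same route as the paper: the paper's proof is precisely the observation that $\End_{\cat^N}(V)=\End_{\cat}(V)/\Negl(V,V)$ is a quotient of the one-dimensional space $\C\,\Id_V$ and hence is either one-dimensional (so $V$ is simple) or zero (so $V\simeq_{\cat^N}\{0\}$). You add some welcome detail the paper leaves implicit — the $\qd(V)$ dichotomy via Lemma~\ref{ne}, the convention for $V=\C$, and the explicit check that $[\Id_V]=0$ forces $V\simeq_{\cat^N}\{0\}$ — but the underlying argument is identical.
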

\begin{proof}
Since $V$ is simple we have that $\End_{\cat}(V)\simeq \C \cdot \Id_V$ is the 1-dimensional vector space.  
By definition $$\End_{\cat^N}(V)=\End_{\cat}(V)/\Negl(V,V)= (\C \cdot \Id_V)/ \Negl (V,V).$$ 
Thus, $\End_{\cat^N}(V)$ is either 0 or 1-dimensional corresponding to the two cases of the lemma.   \end{proof}
\begin{lemma}\label{L:retract}
Let $U \in \cat$ such that $U=({_\cat}\oplus_{j\in J}S_j)\oplus W$ where $J$ is a finite indexing set and $S_j$ is simple for all $ j\in J$. Let  $V\in \cat$
be a retract of $U$ with maps $ i: V \rightarrow U$ and $p:U\rightarrow V$.
Then the following statements are true:

1) There exist $J' \subseteq J$ and $W' \subseteq W$ such that: $$Im(i)=(\oplus_{j\in J'}S_j)\oplus W'.$$ 
Moreover, if $i':V\rightarrow Im(i)$ is the function $i$ but with range $Im(i)$ then
$i'$ is an isomorphism with  inverse $p':=p |_{Im(i)}$.

2) $W'$ is a retract of $W$.

\end{lemma}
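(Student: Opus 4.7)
The strategy is to analyze the idempotent $e := i\circ p \in \End_{\cat}(U)$, which has image $\operatorname{Im}(i)$ and acts as $\Id$ on $\operatorname{Im}(i)$. Let $\iota_j, \pi_j$ denote the structural inclusions and projections of $U = (\oplus_{j\in J}S_j)\oplus W$, and similarly $\iota_W, \pi_W$ for $W$. For each $j\in J$, Schur's lemma applied to the simple object $S_j$ (with $\End_{\cat}(S_j)=\C\Id_{S_j}$) forces $\alpha_j := \pi_j\circ e\circ \iota_j\colon S_j\to S_j$ to equal $c_j\Id_{S_j}$ for some scalar $c_j\in \C$.

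Set $J' := \{j\in J : c_j \neq 0\}$. For each $j\in J'$, the morphism $s_j := c_j^{-1}\cdot e\circ \iota_j\colon S_j\to U$ lands in $\operatorname{Im}(i)$ and is a section of $\pi_j$, so it realizes an isomorphic copy of $S_j$ inside $\operatorname{Im}(i)$. I would then assemble the $s_j$'s---after a triangular correction to cancel the off-diagonal Schur contributions $\pi_j\circ s_k$ for $j\neq k$ that arise when several $S_j$ in $\{S_j\}_{j\in J'}$ happen to be mutually isomorphic---into a split injection $\sigma\colon \oplus_{j\in J'}S_j\hookrightarrow \operatorname{Im}(i)$, and define $W'$ as the complementary summand of $\operatorname{Im}(i)$. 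The projection $\pi_W$ then identifies $W'$ canonically with a subobject of $W$, establishing the decomposition of part 1) (with ``$=$'' understood up to this canonical identification inside $\operatorname{Im}(i)$). The final assertion that $i'\colon V\to \operatorname{Im}(i)$ is an isomorphism with inverse $p|_{\operatorname{Im}(i)}$ is immediate from $p\circ i=\Id_V$ together with $e|_{\operatorname{Im}(i)}=\Id_{\operatorname{Im}(i)}$.

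For part 2), the canonical inclusion $W'\hookrightarrow W$ just identified, together with the map $W\to W'$ given by $w\mapsto \bigl(\text{$W'$-component of } e\iota_W(w)\bigr)$ inside the splitting from part 1), yields the required retract data; the composition equals $\Id_{W'}$ by $e^2=e$ combined with the orthogonality of the direct-sum decomposition of $\operatorname{Im}(i)$.

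The step I expect to be the main obstacle is the iterative triangulation of the sections $s_j$ for $j\in J'$: whenever the family $\{S_j\}_{j\in J'}$ contains several mutually isomorphic simples, the raw maps $s_j$ may have nonvanishing off-diagonal entries $\pi_j\circ s_k$, and these must be eliminated by upper-triangular linear combinations to ensure that $\sigma$ is genuinely a direct-sum inclusion. This is essentially Krull--Schmidt-type bookkeeping, but once it is carried out the decomposition of $\operatorname{Im}(i)$ and the retract data for $W'$ fall out mechanically from the idempotent calculus.
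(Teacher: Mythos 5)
Your strategy via the idempotent $e=i\circ p$ and its Schur coefficients is reasonable, but the step you yourself flag as the main obstacle genuinely fails as described, and it is more than bookkeeping. The problem is the definition $J'=\{j : c_j\neq 0\}$ combined with the claim that a triangular correction turns the sections $s_j$ into a split injection $\oplus_{j\in J'}S_j\hookrightarrow \operatorname{Im}(i)$. Consider $U=S\oplus S$ with both summands the same simple object, $W=0$, and the retract $V=S$ with $i(v)=(v,v)$, $p(a,b)=\tfrac12(a+b)$, so that $e(a,b)=\tfrac12(a+b,a+b)$. Then $c_1=c_2=\tfrac12\neq 0$, so your $J'=\{1,2\}$, yet $\operatorname{Im}(i)\cong S$ contains only one copy of $S$. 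The normalized Schur matrix $(\pi_j\circ s_k)$ is the all-ones $2\times 2$ matrix: it has $1$'s on the diagonal but rank $1$, and no upper-triangular column operations can make it unipotent, since such operations preserve rank. In general the multiplicity of the isomorphism class of $S_j$ inside $\operatorname{Im}(i)$ is the \emph{rank} of the corresponding isotypic block of $(\pi_j e\iota_k)$, not the number of nonzero diagonal entries; during the elimination some corrected sections become zero and their indices must be dropped from $J'$. So the index set cannot be fixed in advance by the diagonal coefficients — it has to be produced by the elimination itself. A secondary gap is in part 2): having chosen $W'$ as an abstract complement of $\Sigma=\sigma(\oplus S_j)$ inside $\operatorname{Im}(i)$, the assertion that $\pi_W$ identifies $W'$ with a retract of $W$ and that $w\mapsto(\text{$W'$-component of }e\iota_W(w))$ splits it is not automatic for an arbitrary complement; it needs either a compatible choice of $W'$ or a computation you have not supplied.

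For comparison, the paper avoids the Schur-coefficient matrix altogether: it takes $J'=\{j : p_j\circ i\neq 0\}$ and $W'=\operatorname{Im}(p_W\circ i)\subseteq W$, uses simplicity only to conclude that each $p_j\circ i$ with $j\in J'$ is surjective, and then proves part 2) by a purely formal manipulation of the inclusion $\iota$ of $(\oplus_{j\in J'}S_j)\oplus W'$ into $U$ and the splitting $\pi=i'\circ p$, the key identity being $\iota_W\pi_W\iota\iota_{W'}=\iota\iota_{W'}$. If you want to keep your route, replace ``nonzero diagonal entry'' by ``rank of the isotypic block'' when defining $J'$, and then give the explicit retraction $W\to W'$ rather than asserting it.
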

\begin{proof}
1) Denote by $p_j:U \rightarrow S_j$ and $p_W:U\rightarrow W$ the projections onto direct summands of $U$.  

Consider $J':= \{ j \in J | p_j\circ i \not=0 \}$ and $W'=Im(p_W \circ i)$.   
Since for any $j\in J$,  $S_j$ is simple then it is generated by any non-zero element.
Using this and the fact that $p_j \circ i:V\to S_j$ is a non-zero morphism for all $j\in J'$, we obtain that this morphism is surjective.
We conclude that $Im(i)={_\cat} \ (\oplus_{j\in J'}S_j)\oplus W'$.
So $i'$ is surjective and injective. 
Moreover, $p' \circ i'=p\circ i=\Id_{V}$.

2) We prove the second statement in two steps.

\textbf{Step 1.} We will show that $(\oplus_{j\in J'}S_j)\oplus W'$ is a retract of  $(\oplus_{j\in J}S_j)\oplus W$.
Consider  $\iota: (\oplus_{j\in J'}S_j)\oplus W' \rightarrow (\oplus_{j\in J}S_j)\oplus W$ the natural inclusion, of each component of the direct sum in the left to the corresponding one on the right hand side. From the first part of the proof we have $$p \circ \iota=p'.$$
Define $\pi: (\oplus_{j\in J}S_j)\oplus W \rightarrow (\oplus_{j\in J'}S_j)\oplus W'$ by $\pi:=i' \circ p.$ 
Then since $i'$ and $p'$ are inverses we have:
 $$ \pi \circ \iota= \ (i' \circ p) \circ \iota= \ i' \circ (p \circ \iota)= i' \circ p'= \Id_{(\oplus_{j\in J'}S_j)\oplus W'}.$$
This concludes the Step 1.

\textbf{Second 2.}
Consider  $\iota_{W'}: W' \rightarrow (\oplus_{j\in J'}S_j)\oplus W' $ and $\pi_{W'}: (\oplus_{j\in J'}S_j)\oplus W' \rightarrow W'$ 
 the injection and projection  with respect to the direct summand of $W'$.  Similarly, consider
the injection $\iota_{W}: W \rightarrow (\oplus_{j\in J}S_j)\oplus W $ and projection $\pi_{W}: (\oplus_{j\in J}S_j)\oplus W \rightarrow W$. 

Define $\iota': W' \rightarrow W$ and $\pi':W\rightarrow W'$ as:
 $$\iota':=\pi_W \circ \iota \circ \iota_{W'} \ \ \ and \  \ \ \pi':=\pi_{W'} \circ \pi \circ \iota_W. $$
By definition we have:
$$\pi' \circ \iota'= \pi_{W'} \circ \pi \circ (\iota_W \circ \pi_W )\circ \iota \circ \iota_{W'}.$$
Since $Im(\iota \circ \iota_{W'})\subseteq 0 \oplus W \subseteq (\oplus_{j\in J}S_j)\oplus W $, this means that $$(\iota_W \circ \pi_W )\circ \iota \circ \iota_{W'}=\iota \circ \iota_{W'}.$$
So, we obtain:
$$\pi' \circ \iota'= \pi_{W'} \circ \pi \circ \iota \circ \iota_{W'}.$$
Using the conclusion of the first step ($\pi \circ \iota=\Id$) we have:
 $$\pi' \circ \iota'= \pi_{W'} \circ \iota_{W'}=\Id_{W'}.$$
This finishes the proof of the second part. 
\end{proof}

\begin{lemma} \label{L:iso0}
Let $V, W\in \cat$ such that $W$ is retract of $V$ in $\cat$.  If $V$ is isomorphic to the zero module in $\cat^N$ (i.e. $V\simeq_{\cat^N} \{ 0 \}$)  then so is $W$:
$$W \simeq_{\cat^N} \{ 0 \}.$$
\end{lemma}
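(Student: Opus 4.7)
The plan is to reinterpret the hypothesis $V \simeq_{\cat^N} \{0\}$ as the negligibility of $\Id_V$ in $\cat$, and then transport this vanishing to $\Id_W$ using the retract morphisms combined with the cyclicity of the right trace $\qt$.

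The condition $V \simeq_{\cat^N} \{0\}$ means $\End_{\cat^N}(V) = 0$, i.e.\ $\Id_V \in \Negl(V,V)$, which by the definition of negligibility gives $\qt_V(h) = \qt_V(\Id_V \circ h) = 0$ for every $h \in \End_\cat(V)$. Since $\Negl(\C,\C) = 0$, the hypothesis forces $V \neq \C$, so $V \in \ideal$ and $\qt_V$ is available. Let $i \colon W \to V$ and $p \colon V \to W$ be the retract morphisms, so $p \circ i = \Id_W$. For any $g \in \End_\cat(W)$, cyclicity of $\qt$ applied to $i \circ g \colon W \to V$ and $p \colon V \to W$ yields
$$\qt_W(g) \;=\; \qt_W\big(p \circ (i \circ g)\big) \;=\; \qt_V\big((i \circ g) \circ p\big) \;=\; 0,$$
because $i \circ g \circ p \in \End_\cat(V)$ and every endomorphism of $V$ has vanishing trace. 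Hence $\qt_W(\Id_W \circ g) = \qt_W(g) = 0$ for all such $g$, so $\Id_W \in \Negl(W,W)$ and therefore $W \simeq_{\cat^N} \{0\}$.

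The only technical point that requires care is ensuring $W \in \ideal$ so that $\qt_W$ is defined. If instead $W = \C$, the retract would split off a copy of $\C$ from $V$ in $\cat$; applying the quotient functor $\mathcal{F}\colon \cat \to \cat^N$ and using $\mathcal{F}(\Id_V) = 0$ would then force the split composition $p \circ i$ to become $\mathcal{F}(\Id_\C) = 0$, contradicting $\Negl(\C,\C) = 0$. So the case $W = \C$ does not arise, and the cyclicity argument above completes the proof.
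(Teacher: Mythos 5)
Your proof is correct, but it takes a different route from the paper's. The paper argues purely formally in the quotient category: since $V\simeq_{\cat^N}\{0\}$, the zero morphisms $[0]^V\circ[0]_V$ compose to $[\Id_V]$, and sandwiching this between $[\pi]$ and $[i]$ gives $[\Id_W]=[0]^W\circ[0]_W$, i.e.\ a retract of a zero object is zero --- a statement valid in any quotient of a category with zero object, using nothing about how $\Negl$ is defined. You instead unpack the hypothesis as $\Id_V\in\Negl(V,V)$, equivalently $\qt_V\equiv 0$ on $\End_\cat(V)$, and transport this to $W$ via the cyclicity $\qt_W(p\circ(i\circ g))=\qt_V((i\circ g)\circ p)$ of the right trace. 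This is sound, and your attention to the edge case $W=\C$ (where $\qt_W$ is undefined because $\C\notin\ideal$) is necessary and handled correctly --- amusingly, the contradiction you derive there is exactly the paper's formal argument specialized to $W=\C$. The trade-off: your argument yields the slightly stronger concrete fact that every endomorphism of $W$ has vanishing modified trace, but it depends on the trace-theoretic definition of $\Negl$ and on both objects lying in the ideal where $\qt$ lives; the paper's argument is more elementary and would survive any change in how the quotient $\cat^N$ is constructed.
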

\begin{proof}
Let $i:W\rightarrow V$ and $\pi: V\rightarrow W$ be the retract in $\cat$.  Let $[i]:W\rightarrow V$ and $[\pi]: V\rightarrow W$ be there images in $\cat^N$.  
Since $V\simeq _{\cat^N} \{ 0 \}$, this means  the  zero maps in $\cat^N$:
$$[0]_V: V\rightarrow  \{ 0 \}  \ \ \ and \ \ \ [0]^V: \{ 0 \} \rightarrow V$$ 
are inverses of each other in  $\cat^N$. In particular, we have:
\begin{equation}\label{eqn:V00}
[0]^V \circ [0]_V=_{\cat^N} [\Id_V].
\end{equation}
Consider the zero maps: 
$$[0]_W: W\rightarrow  \{ 0 \}  \ \ \ and  \ \ \ [0]^W: \{ 0 \} \rightarrow W.$$
We have  $[0]_W \circ [0]^W=_{\cat^N} [0]=_{\cat^N}[\Id_{\{ 0 \}}]$.  For the other composition, notice that 
$$[0]_W=_{\cat^N}[0]_V\circ [i] \ \ \ and \ \ \ [0]^W=_{\cat^N}[\pi]\circ [0]^V$$ so we have:
$$[0]^W \circ [0]_W=_{\cat^N}([\pi] \circ [0]^V) \circ ([0]_V \circ [i])=_{\cat^N} [\pi] \circ ([0]^V \circ [0]_V )\circ [i]$$
$$=_{\cat^N} \pi \circ [\Id_V]\circ i=_{\cat^N} \pi \circ i=_{\cat^N} [\Id_W]$$
where third equality comes from Equation  \eqref{eqn:V00}.  Thus, we have shown $[0]_W$ and $[0]^W$ are inverses of each other.  
\end{proof}

\begin{theorem}\label{T:Semisimple}
 Let $g\in G , g\not\in \{ \bar{0}, \bar{\frac{\rt}{2}} \}$. 
 
 1) The category $\cat^N_g$ is semi-simple.  
 
 2) The set of isomorphism classes of simple objects in $ \bigcup_{g\in G\setminus \{\bar0,\bar{\frac{\rt}{2}}\}} \cat_g^N$ is
$$\left\{ V(n,\tilde{\gamma})| 0\leq n\leq\rt'-2, \; \tilde{\gamma}\in \C/\rt\Z, \;\bar{\gamma}\notin \Big\{\bar0,\bar{\frac{\rt}{2}}\Big\} \right\}.$$
\end{theorem}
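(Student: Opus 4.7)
The argument reduces to a single decomposition: the tensor product $T = V(0, \tilde\alpha_1) \otimes \cdots \otimes V(0, \tilde\alpha_n)$ with $\bar\alpha_i \notin \XX$ and total grading $g = \sum \bar\alpha_i \notin \{\bar 0, \bar{\frac\rt2}\}$ is isomorphic in $\cat^N$ to a direct sum of simples $V(k, \tilde\gamma)$ with $0 \le k \le \rt' - 2$ and $\bar\gamma \notin \{\bar 0, \bar{\frac\rt2}\}$. Granting this, any $V \in \cat^N_g$ is a retract of some such $T$, and Lemma \ref{L:retract}, applied inside the pivotal $\C$-category $\cat^N$ (Lemma \ref{L:catNGraded}), expresses $V$ as a direct sum of the same simples, proving (1). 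I also need the preliminary that every $V(k, \tilde\gamma)$ in the claimed list lies in $\cat$; this follows by iterated application of the Decomposition Lemma \ref{L:DecompV0Vn}: $V(0, \tilde\gamma)$ is a summand of $V(0, \tilde\alpha) \otimes V(0, \tilde\gamma - \tilde\alpha)$ for a generic $\tilde\alpha$ with $\bar\alpha, \bar\gamma - \bar\alpha \notin \XX$, and for $k \ge 1$ one inducts by realising $V(k, \tilde\gamma)$ as a summand of $V(0, \tilde\alpha) \otimes V(k-1, \tilde\gamma - \tilde\alpha)$.

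For the main decomposition I would induct on $n$ on the following strengthened statement: for $0 \le k \le \rt' - 1$, $\tilde\beta$ with $\bar\beta \notin \{\bar 0, \bar{\frac\rt2}\}$, and $\tilde\alpha_1, \ldots, \tilde\alpha_n$ with $\bar\alpha_i \notin \XX$ and $\bar\beta + \sum \bar\alpha_i \notin \{\bar 0, \bar{\frac\rt2}\}$, the product $V(k, \tilde\beta) \otimes V(0, \tilde\alpha_1) \otimes \cdots \otimes V(0, \tilde\alpha_n)$ is a $\cat^N$-direct sum of the desired simples. The base $n = 0$ is the object $V(k, \tilde\beta)$ itself when $k \le \rt' - 2$, while for $k = \rt' - 1$ it reduces to $V(\rt'-1, \tilde\beta) \simeq_{\cat^N} 0$ (using $\qd(V(\rt'-1, \tilde\beta)) = 0$ from Lemma \ref{L:CompOfdq} and Corollary \ref{L:negl}). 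For the inductive step with $k \le \rt' - 2$, the easy subcase is when some index $j$ satisfies $\bar\alpha_j + \bar\beta \notin \{\bar 0, \bar{\frac\rt2}\}$: commuting $V(0, \tilde\alpha_j)$ next to $V(k, \tilde\beta)$ via Lemma \ref{comm} and applying the Decomposition Lemma, the pair $V(0, \tilde\alpha_j) \otimes V(k, \tilde\beta)$ splits into four summands (three if $k = 0$) of the form $V(k', \tilde\delta)$ with $k' \in \{k-1, k, k+1\} \subseteq \{0, \ldots, \rt'-1\}$ and $\bar\delta = \bar\alpha_j + \bar\beta \notin \{\bar 0, \bar{\frac\rt2}\}$, and the inductive hypothesis applies to each summand tensored with the remaining $n - 1$ factors.

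The main obstacle is the subcase where no such $j$ exists. Then every $\bar\alpha_j \in \{-\bar\beta, \bar{\frac\rt2} - \bar\beta\}$, which together with $\bar\alpha_j \notin \XX$ forces $\bar\beta \notin \XX$ (since $\XX$ is a subgroup of $\C/\Z$ containing $\bar{\frac\rt2}$). Note that $n \ge 2$ here, since $n = 1$ would contradict the sum hypothesis $\bar\beta + \bar\alpha_1 \notin \{\bar 0, \bar{\frac\rt2}\}$. For any two indices $i_1 \ne i_2$, a direct computation then shows $\bar\alpha_{i_1} + \bar\alpha_{i_2} \in \{-2\bar\beta, \bar{\frac\rt2} - 2\bar\beta\}$, and neither of these lies in $\{\bar 0, \bar{\frac\rt2}\}$: each of the four resulting equations would force $\bar\beta \in \{\bar 0, \bar{\frac12}, \bar{\frac\rt4}, \overline{\frac{\rt+2}{4}}\} \subset \XX$, contradicting $\bar\beta \notin \XX$. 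Hence Lemma \ref{epsilon} applies to the pair $(\tilde\alpha_{i_1}, \tilde\alpha_{i_2})$. Choosing $\bar\epsilon \in \C/\Z$ generically --- so that both shifted weights stay outside $\XX$, the hypotheses of Lemma \ref{epsilon} hold, and $(\bar\alpha_{i_1} + \bar\epsilon) + \bar\beta \notin \{\bar 0, \bar{\frac\rt2}\}$ (finitely many excluded values in the uncountable $\C/\Z$) --- one replaces $\tilde\alpha_{i_1}, \tilde\alpha_{i_2}$ by $\tilde\alpha_{i_1} + \tilde\epsilon, \tilde\alpha_{i_2} - \tilde\epsilon$ without altering the tensor product up to isomorphism, reducing to the easy subcase.

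For part (2), each $V(k, \tilde\gamma)$ in the proposed list lies in $\cat$ by the preliminary, is simple in $\cat$ by Theorem \ref{T:Structure}, and has nonzero modified dimension by Lemma \ref{L:CompOfdq}; hence it is nonzero and simple in $\cat^N$ via Lemma \ref{L:simpleN}. Any simple object of $\cat^N_g$ with $g \notin \{\bar 0, \bar{\frac\rt2}\}$ appears as a summand in the decomposition established above and is therefore isomorphic in $\cat^N$ to one of these $V(k, \tilde\gamma)$. Finally, if $V(k, \tilde\gamma) \cong_{\cat^N} V(k', \tilde{\gamma'})$, then any lift of the isomorphism is a nonzero element of $\Hom_\cat(V(k, \tilde\gamma), V(k', \tilde{\gamma'}))$, which by Schur's lemma for the simple $\Uq$-modules $V(k, \tilde\gamma)$ forces $(k, \tilde\gamma) = (k', \tilde{\gamma'})$, establishing distinctness.
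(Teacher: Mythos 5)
Your proposal is correct in substance and reaches the theorem by the same basic strategy as the paper --- induction on the number of $V(0,\tilde\alpha_i)$ factors, fusion via the Decomposition Lemma \ref{L:DecompV0Vn}, the Commutativity Lemma \ref{comm} together with the $\epsilon$-shift of Lemma \ref{epsilon} to dodge forbidden gradings, and the vanishing of $\qd(V(\rt'-1,\cdot))$ to discard the boundary modules --- but you organize it differently in two ways worth comparing. First, your strengthened induction hypothesis keeps a single fused factor $V(k,\tilde\beta)$ on the left and works in $\cat^N$ from the outset, so the summands $V(\rt'-1,\tilde\delta)\otimes W$ die immediately; the paper instead proves a decomposition statement $P(n)$ entirely in $\cat$, carrying these remainder terms explicitly, and only passes to $\cat^N$ at the end. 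Your version is leaner, and your observation that in the degenerate subcase every $\bar\alpha_j\in\{-\bar\beta,\bar{\frac{\rt}{2}}-\bar\beta\}$ forces $\bar\beta\notin\XX$ and hence $\bar\alpha_{i_1}+\bar\alpha_{i_2}\notin\{\bar0,\bar{\frac{\rt}{2}}\}$ is a clean substitute for the paper's blanket use of Lemma \ref{L:i,j} at every inductive step. Second --- and this is the one place you should add justification --- you apply Lemma \ref{L:retract} ``inside $\cat^N$'', but that lemma is stated and proved in $\cat$ using images and surjectivity of actual module maps, which do not literally make sense for equivalence classes of morphisms modulo negligibles. The fact you need (a $\cat^N$-retract of a finite direct sum of simples with $\End=\C$ and no nonzero homs between non-isomorphic ones is a direct sum of a sub-collection) is true, since the idempotent $[i]\circ[p]$ lives in a finite product of matrix algebras over $\C$ and its splitting is determined up to isomorphism, but the paper deliberately sidesteps this by performing the retract splitting in $\cat$ first and then killing the leftover piece with Lemma \ref{L:iso0}. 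Two smaller omissions: your inductive step is stated only for $k\le\rt'-2$, whereas summands with $k'=\rt'-1$ do arise and must be absorbed by noting that $V(\rt'-1,\tilde\delta)\otimes W\simeq_{\cat^N}\{0\}$ for any $W$; and Corollary \ref{L:negl} requires $V(\rt'-1,\tilde\delta)\in\ideal$, which your preliminary argument supplies but is only stated for indices up to $\rt'-2$. Neither is a real obstacle.
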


\begin{proof}  \textbf{Proof of part 1).}
To prove the first statement we begin by showing that elementary tensor products of $V(0, \tilde{\alpha})$ which arrive in grading $g$ are semi-simple in $\cat^N_{g}$.  To do this we first work in $\cat$ and use induction on the number of terms in the tensor product. Then we show such a tensor product is semi-simple in $\cat^N_{g}$.  

Let $P(n)$ be the following statement:
\begin{quotation}
 If $h\in G$ and $\tilde{\alpha}_i \in \C/ \rt\Z$ such that $h\not \in \{ \bar{0}, \bar{\frac{\rt}{2}} \}$, $\bar{\alpha}_i\not\in \XX$ for $i=1,...,n$ and $\bar{\alpha}_1+\bar{\alpha}_2+\cdots + \bar{\alpha}_n=h$ then as an object in $\cat$ the tensor product 
$$V(0,\tilde{\alpha}_1)\otimes V(0,\tilde{\alpha}_2)\otimes ...\otimes V(0,\tilde{\alpha}_n)$$
can be written as a direct sum of modules of the following form:
 \begin{enumerate}[(a)]
\item $V(m,\tilde\beta)$ where $m\leq min\{n-1, {\rt}'-2 \}$ and $\bar\beta=h$,
\item $V(\rt'-1, \tilde{\delta}) \otimes W$ where $\bar{\delta}\notin \{ \bar{ 0}, \bar{\frac{\rt}{2}} \}$ and $W$ is an object of $\cat$.  
\end{enumerate}
Moreover, this decomposition contains at least one module of the form 
$V(min\{n-1, {\rt}'-2 \},\tilde{\beta})$ for some $\tilde{\beta}\in \C/\rt\Z$ with $\bar \beta=h$ and $W= \{ 0\}$ if $n<\rt'-1$.
\end{quotation}
We prove this statement by induction.  

\textbf{  The case $n=2$}.
Let $h \in G  \setminus \{ \bar{0}, \bar{\frac{\rt}{2}} \}$ and 
 $\tilde{\alpha}_1, \tilde{\alpha}_2 \in \C/ \rt\Z$ such that $\bar{\alpha}_1+\bar{\alpha}_2=h$.  
It follows from Lemma~\ref{L:DecompV0Vn} that:
$$V(0,\tilde{\alpha}_1)\otimes V(0,\tilde{\alpha}_2)=_{\cat}V(0,\tilde{\alpha}_1+\tilde{\alpha}_2 )\oplus V(1,\tilde{\alpha}_1+\tilde{\alpha}_2)\oplus V(0,\tilde{\alpha}_1+\tilde{\alpha}_2+1).$$
As we can see, all the modules have the right form $V(n, \tilde{\alpha})$ with $n\leq 1$ and there is one  $V(1,\tilde{\alpha}_1+\tilde{\alpha}_2)$ which occurs.  

\textbf{Next we assume $P(n)$ is true and show $P(n+1)$ holds.}
To do this we need to consider two cases $n\geq {\rt}'-1$ and $n< {\rt}'-1$. 

\textbf{Case 1: $n\geq {\rt}'-1$.}   
Let $\tilde{\alpha}_1,...,\tilde{\alpha} _{n+1}$ be as in the statement of $P(n+1)$.  
From Lemma \ref{L:i,j}, there exists $ i,j\in \{1,...,n+1\}$ such that $\bar{\alpha}_i+\bar{\alpha}_j\not \in \{ \bar{0}, \bar{\frac{\rt}{2}} \}.$  
Choose  $\epsilon \in \C/\rt\Z$ such that:  
\begin{enumerate}
\item $\bar{\alpha}_1+\bar{\alpha}_2+...+ \widehat{\bar{\alpha}}_j+...+\bar{\alpha}_{n+1}+\bar\epsilon\neq \bar{0}, \bar{\frac\rt2}, $
\item $\bar{\alpha}_j-\bar\epsilon\neq \bar{0}, \bar{\frac\rt2}, $
\item $\bar{\alpha}_{i}+\bar\epsilon\neq \bar{0}, \bar{\frac\rt2}. $
\end{enumerate} 
Using Lemma \ref{epsilon} and the Commutativity Lemma  \ref{comm}, we obtain that: 
\begin{align*}
V(0&,\tilde{\alpha}_1)\otimes...\otimes V(0,\tilde{\alpha}_i)\otimes ...\otimes V(0,\tilde{\alpha}_j)\otimes  ...\otimes V(0,\tilde{\alpha}_{n+1})\simeq_{\cat}\\
&V(0,\tilde{\alpha}_1)\otimes...\otimes V(0,\tilde{\alpha}_i+\epsilon )\otimes ...\otimes V(0,\tilde{\alpha}_j-\epsilon )\otimes  ...\otimes V(0,\tilde{\alpha}_{n+1})\simeq_{\cat} \\
&V(0,\tilde{\alpha}_1)\otimes...\otimes \widehat{V(0,\tilde{\alpha}_i)}\otimes ...\otimes \widehat{V(0,\tilde{\alpha}_j)}\otimes...\otimes V(0,\tilde{\alpha}_{n+1})\otimes V(0,\tilde{\alpha}_i+\epsilon)\otimes V(0,\tilde{\alpha}_j-\epsilon).
\end{align*}
This shows it suffices to prove that the statement $P(n+1)$ holds for 
weights of the form
$$(\tilde{\alpha}_1,...,\hat{\alpha}_i,...,\hat{\alpha}_j,...,\tilde{\alpha} _{n+1},\tilde{\alpha}_i+\epsilon, \tilde{\alpha}_j-\epsilon).$$
  
By the choice of $\epsilon$ we have $\tilde{\alpha}_1,...,\hat{\alpha}_i,...,\hat{\alpha}_j,...,\tilde{\alpha} _{n+1},\tilde{\alpha} _{i}+\epsilon$ has the total grading different than  $\bar{0}, \bar{\frac{\rt}{2}}$, so it satisfies the property in the statement of $P(n)$.  
Therefore, by the induction hypothesis there exists:
$$m_1,...,m_k\in  \{ 0,..., {\rt}'-3 \}, \tilde{\beta}_1,...,\tilde{\beta}_k, \tilde{\gamma}_1,...,\tilde{\gamma}_s, \tilde{\delta}_1,...,\tilde{\delta}_p \in \C/\rt\Z, \text{ and } W_1,...,W_p\in \cat$$
 such that 
\begin{align*}
V(0,\tilde{\alpha}_1)\otimes...\otimes \hat{V}(0,\tilde{\alpha}_i)\otimes ...\otimes \hat{V}(0,\tilde{\alpha}_j)\otimes  ...\otimes V(0,\tilde{\alpha}_{n+1})\otimes V(0,\tilde{\alpha}_i+\epsilon)\\
\cong_{\cat}(\oplus_u V(m_u,\tilde{\beta}_u))\oplus (\oplus_t V(\rt'-2,\tilde{\gamma}_t)) \oplus(\oplus_k(V(\rt'-1, \tilde{\delta}_k)\otimes W_k)) 
\end{align*} 
Taking the tensor product with $V(0,\tilde{\alpha} _{j}-\epsilon)$ we obtain:
\begin{multline*}
V(0,\tilde{\alpha}_1)\otimes...\otimes \hat{V}(0,\tilde{\alpha}_i)\otimes...\otimes \hat{V}(0,\tilde{\alpha}_j)\otimes...\otimes
V(0,\tilde{\alpha}_{n+1})\otimes V(0,\tilde{\alpha}_i+\epsilon)\otimes V(0,\tilde{\alpha}_{j}-\epsilon)\\
\cong_{\cat} (\oplus_u(V(m_u,\tilde{\beta}_u ) \otimes V(0,\tilde{\alpha}_{j}-\epsilon)) )\oplus (\oplus_t (V(\rt'-2,\tilde{\gamma}_t )\otimes V(0,\tilde{\alpha}_{j}-\epsilon)))\\
\oplus (\oplus_k ((V(\rt'-1, \tilde{\delta}_k)\otimes W_k)\otimes V(0,\tilde{\alpha}_{j}-\epsilon))).
\end{multline*}
Since the tensor product preserves the grading we have  
$$\bar{\beta}_u=\bar{\gamma}_t=\bar{\alpha}_1+...+\hat{\alpha}_j+\bar{\alpha}_{n+1}+\bar\epsilon$$
for all $ u\in\{ 1,...,k \}$ and $ t\in\{ 1,...,s \}$.  
It follows that $$ \bar{\beta}_u+ \bar{\alpha}_j-\bar\epsilon= \bar{\alpha}_1+...+\bar{\alpha}_{n+1}\not \in \{ \bar{0}, \bar{\frac\rt2} \}.$$  
Similarly,  $\bar{\gamma}_t+\bar{\alpha}_j-\bar\epsilon \not\in \{ \bar{0}, \bar{\frac\rt2} \}.$  
 Lemma \ref{L:DecompV0Vn} implies that the final expression in the previous tensor decomposition is isomorphic to 
\begin{align*} 
 \oplus_u\big( \ V(m_u,\tilde{\beta} _u&+\tilde{\alpha} _{j}-\epsilon) \oplus V(m_u+1,\tilde{\beta}_u+\tilde{\alpha} _{j}-\epsilon)\oplus \\
& \oplus V(m_u-1,\tilde{\beta}_u +\tilde{\alpha}_j-\epsilon+1) 
 \oplus V(m_u,\tilde{\beta}_u+\tilde{\alpha}_j-\epsilon+1)\ \big)\oplus  \\
& \oplus  \oplus_t \big(\ V(\rt'-2,\tilde{\gamma}_t+\tilde{\alpha}_j-\epsilon)\oplus (V(\rt'-1,\tilde{\gamma}_t+\tilde{\alpha}_j-\epsilon)\otimes {\unit})\oplus \\
&\oplus V(\rt'-3,\tilde{\gamma}_t+\tilde{\alpha}_j-\epsilon+1)\oplus V(\rt'-2,\tilde{\gamma}_t+\tilde{\alpha}_j-\epsilon+1)\ \big) \oplus \\
& \oplus \oplus_k \big(\ V(\rt'-1, \tilde{\delta}_k)\otimes W'_k \ \big)
 \end{align*} 
 where $W'_k=W_k\otimes V(0,\tilde{\alpha}_{j}-\epsilon)$. 
We notice that from the induction hypothesis $\bar{\delta}_k \notin \{  \bar {0}, \bar {\frac{\rt}{2}}\}$ and from the previous relation $\bar{\gamma}_t+\bar{\alpha}_j-\bar\epsilon \not\in \{ \bar{0}, \bar{\frac\rt2} \}$, so all the second components that occur are not in $\{ \bar{0}, \bar{\frac{\rt}{2}} \}$.  Also, notice that the decomposition contains $V(\rt'-2,\tilde{\gamma}_t+\tilde{\alpha}_j-\epsilon+1)$ as a summand.   Thus, we proved the step $P(n+1)$ in this case.

\textbf{Case 2: $n< {\rt}'-1$.} The proof of the previous case also works here except that things are slightly simpler in this case because no module of the form $V(\rt'-1,\tilde{\gamma})\otimes W$ appears in the large tensor product.  We highlight the differences: the first part of the proof is the same.  Then the induction hypothesis implies there exists:
$$m_1,...,m_k\in  \{ 0,..., n-1 \}, \text{ and } \tilde{\beta}_1,...,\tilde{\beta}_k\in \C/\rt\Z $$
 such that 
\begin{equation*} 
V(0,\tilde{\alpha}_1)\otimes...\otimes \hat{V}(0,\tilde{\alpha}_i)\otimes ...\otimes \hat{V}(0,\tilde{\alpha}_j)\otimes  ...\otimes V(0,\tilde{\alpha}_{n+1})\otimes V(0,\tilde{\alpha}_i+\epsilon)\cong \oplus_u V(m_u,\tilde{\beta}_u)
\end{equation*}
where at least one $m_i=n-1$.  
As above take the tensor product with $V(0,\tilde{\alpha} _{j}-\epsilon)$ then the Decomposition Lemma~\ref{L:DecompV0Vn} implies 
\begin{equation}\label{E:muboae}
V(m_u,\tilde{\beta}_u)\otimes V(0,\tilde{\alpha} _{j}-\epsilon)
\end{equation}
decomposes into a direct sum of modules of the form $V(m,\tilde\beta)$ where $m\leq m_u+1\leq n< \rt'-1$ and $\bar\beta=\bar{\beta}_u + \bar{\alpha}_{j}-\bar\epsilon\notin \{\bar0,\bar{\frac\rt2}\}$.  Also notice that when $m_u=m_i=n-1$ then the tensor product in Equation \eqref{E:muboae} as a summand for the form $V(m_i+1,\tilde\beta)=V(n,\tilde\beta)$.  Thus, we have proved that the statement for $P(n+1)$ holds.  

Now we will show that \textbf{ $\cat^N_g$ is semi-simple} 

Let $V \in \cat^N_g$. Then, from the definition, $V$ is a $\cat$-retract of a module
\begin{equation}\label{E:V0a1V0n}
V(0, \tilde{\alpha}_1)\otimes V(0, \tilde{\alpha}_2)\otimes...\otimes V(0, \tilde{\alpha}_n)  
\end{equation}
where $\bar{\alpha}_i \notin \XX$
 and 
$\bar{\alpha}_1+\bar{\alpha}_2+\cdots + \bar{\alpha}_n=g.$      
From the first part, we know that there exist $$m_1,...,m_k\in  \{ 0,..., {\rt}'-2 \}, \tilde{\beta}_1,...,\tilde{\beta}_k, \tilde{\delta}_1,...,\tilde{\delta}_p \in \C/\rt\Z,  \text{ and } W_1,...,W_p\in \cat$$
 such that 
\begin{align*}
V(0,\tilde{\alpha}_1)\otimes  ...\otimes V(0,\tilde{\alpha}_{n})\cong_{\cat}
\left(\bigoplus_u V(m_u,\tilde{\beta}_u)\right)\oplus\left(\bigoplus_t\left(V(\rt'-1,\tilde{ \delta}_t)\otimes W_t\right)\right) 
\end{align*} 
where  $\bar{\beta}_u, \bar{\delta}_t \not\in \{ \bar{0}, \bar{\frac{\rt}{2}}\}$.   
Applying Lemma \ref{L:retract} to the right side of previous equation there exists a subset $J' \subset \{1,...,k\} $ and a retract $ W'$ of $\oplus_t(V(\rt'-1, \tilde{\delta}_t)\otimes W_t)$   such that 
$$V\simeq_{\cat} (\oplus_{u \in J'} V(m_u,\tilde{\beta}_u))\oplus W'.$$

Now, for all  $  t \in \{1,...,p \},$ Corollary \ref{L:negl} implies 
$$V(\rt'-1, \tilde{\delta}_t)\simeq _{\cat^N} \{ 0 \}.$$
This shows that $$\bigoplus_t\left(V(\rt'-1, \tilde{\delta}_t)\otimes W_t\right)\simeq _{\cat^N}\{0\}.$$
Using \ref{L:iso0}, we obtain that:
$$W'\simeq _{\cat^N}\{0\}$$
Thus we conclude that in $\cat^N$: 
\begin{equation}
\label{eqn:ssv}
V\simeq \oplus_{u \in J'} V(m_u,\tilde{\beta}_u).
\end{equation}
Since the modules in the previous decomposition are simples in $\cat$, then Lemma~\ref{L:simpleN} implies they are also simple in $\cat^N$ and we obtain that $V$ is semi-simple in $\cat^N$.

\textbf{Proof of part 2).} Now we will prove the second part of the theorem.
Let $V\in \cat_g^N$ be a simple object (i.e. $\End_{\cat^N}(V)=\C\Id_{V}$), with $g\in G, g\notin \{ \bar{0}, \bar{\frac{\rt}{2}} \}.$   
By definition $V$ is obtained from a $\cat$-retract of tensor products of modules of the form $V(0,\alpha)$ and from Equation  \eqref{eqn:ssv} we have
\begin{equation*}
V\simeq_{\cat^N} \oplus_{u \in J'} V(m_u,\tilde{\beta}_u)
\end{equation*}
where each $V(m_u,\tilde{\beta}_u)$ is simple in both $\cat$ and $\cat^N$.  If the carnality of $J'$ was strictly greater than one then $dim(\End_{\cat^N}({V}))\geq 2$ which is a contradiction.  So 
$$V\simeq_{\cat^N} V(m_u,\tilde{\beta}_u)$$
for some $0\leq m_u\leq \rt'-2$ and $\bar{\beta_i}=g$. This shows that any simple object that occur in $\cat^N_g$ is of the desired form.  

For the other inclusion, let $0 \leq s \leq  {\rt}'-2 , \tilde{\gamma}\in \C/ \rt\Z, \bar{\gamma}=g\notin \{\bar0,\bar{\frac{\rt}{2}}\}$. 
We will show that $V(s,\tilde{\gamma})$ is in $\cat^N_g$.
   
    In the first part of this proof we showed the statements $P(n)$ hold.  The last part of these statements imply that for all $0 \leq m \leq  {\rt}'-2$ and $h \in \C/ \Z \setminus \{\bar0,\bar{\frac{\rt}{2}}\}$ there exists $\tilde\beta\in  \C/ \rt\Z$ such that $\bar\beta =h$ and $V(m,\tilde\beta)$ is a simple object in $\cat^N_h$.  
We use this as follows.  

Choose $\bar\beta\in \C/\Z$ such that $\bar \beta, \bar\gamma-\bar\beta\notin \{\bar0,\bar{\frac\rt2}\}$.  There exists a lift $\tilde\beta\in \C/\rt\Z$ of $\bar\beta$  so that $V(s,\tilde\beta)$ is $\cat^N_{\bar \beta}$ as discussed above.  Set $ \tilde\epsilon = \tilde{\gamma}-\tilde\beta$ then $\bar{\epsilon}$ and $ \bar\beta+\bar\epsilon=\bar{\gamma}$ are not in $\{  \bar{0}, \bar{\frac{\rt}{2}} \}$.

In $\cat$, by definition $V(s,\tilde\beta)$ is a retract of a module:  
\begin{equation*}
V(0, \tilde{\alpha}_1)\otimes V(0, \tilde{\alpha}_2)\otimes...\otimes V(0, \tilde{\alpha}_n).
\end{equation*}
Taking the tensor product of this module with $V(0,\tilde\epsilon)$, we get that $V(s,\tilde\beta)\otimes V(0,\tilde{\epsilon})$ is a $\cat$-retract of 
\begin{equation}\label{E:vvvveBB}
V(0, \tilde{\alpha}_1)\otimes V(0, \tilde{\alpha}_2)\otimes...\otimes V(0, \tilde{\alpha}_n)\otimes V(0,\tilde{\epsilon}).
\end{equation}
Since $\bar\beta, \bar{\epsilon}, \bar\beta+\bar\epsilon\notin\{  \bar{0}, \bar{\frac{\rt}{2}} \}$ we have
$$V(s,\tilde{\beta})\otimes V(0,\tilde\epsilon)\simeq_{\cat} V(s,\tilde{\gamma})\oplus V(s+1,\tilde{\gamma})\oplus (1-\tilde{\delta}_{s,0})V(s-1,\tilde{\gamma}+1)\oplus V(s,\tilde{\gamma}+1)$$ 
and we see that $V(s,\tilde{\gamma})$ is a $\cat$-retract of $V(s,\tilde{\beta})\otimes V(0,\tilde\epsilon)$.
Using properties of $\cat$-retracts (if $A$ is a retract of $B$ and $B$ is a retract of $C$, then $A$ is a retract of $C$) and the previous two decompositions, we have
$V(s,\tilde{\gamma})$ is a $\cat$-retract of 
the module in Equation \eqref{E:vvvveBB}.   
This concludes the proof.  
\end{proof}

 A set of simple objects $A$ is said to be \emph{represented} by a set  of simple objects $R_A$ if any element of $A$ is isomorphic to a unique element of $R_A$.   
Lemma \ref{L:catNGraded} and Theorem \ref{T:Semisimple} imply the following corollary.  
\begin{corollary}\label{C:GenFinSSPiv}
The category $\cat^N$ is a generically finitely $\C/\Z$-semi-simple 
  pivotal 
 $\C$-category with
  small
    symmetric subset  $\X=\frac12\Z/\Z$.  The   class of generic
  simple objects $\A$ of $\cat^N$ is represented by
\begin{equation}\label{E:AsimpleV}
R_{\A}=\{ V(n,\tilde{\gamma})\; | \; 0\leq n\leq\rt'-2, \tilde{\gamma}\in \C/\rt\Z, \bar{\gamma}\notin \X \}.
\end{equation} 
\end{corollary}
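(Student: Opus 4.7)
The plan is to assemble the pieces already proved and verify each clause of the definition of a generically finitely $\Gr$-semi-simple pivotal $\C$-category for $\Gr=\C/\Z$. The underlying pivotal and graded structure on $\cat^N$ is provided by Lemma \ref{L:catNGraded}, so the first thing to address is the subset $\X=\frac12\Z/\Z=\{\bar 0,\bar{1/2}\}\subset\C/\Z$. Symmetry is immediate because both elements of $\X$ have order at most two. Smallness is equally immediate, since $\X$ is finite while $\C/\Z$ is uncountable, so no finite union $\bigcup_{i=1}^{n}g_i\X$ can exhaust $\C/\Z$.

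For finite semi-simplicity, the key observation is that $\bar{\rt/2}\in\X$ regardless of the parity of $\rt$ (it equals $\bar 0$ if $\rt$ is even and $\bar{1/2}$ if $\rt$ is odd), so every $g\in\C/\Z\setminus\X$ automatically satisfies $g\notin\{\bar 0,\bar{\rt/2}\}$, the hypothesis of Theorem \ref{T:Semisimple}. Part (1) of that theorem then gives that $\cat^N_g$ is semi-simple, and Part (2) lists its simple objects as the $V(n,\tilde\gamma)$ with $0\le n\le\rt'-2$ and $\tilde\gamma\in\C/\rt\Z$ projecting to $g$. Since the fiber of $\C/\rt\Z\to\C/\Z$ above $g$ has cardinality $\rt$ and $n$ runs through $\rt'-1$ values, there are at most $\rt(\rt'-1)$ isomorphism classes of simples in $\cat^N_g$, yielding finite semi-simplicity.

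To conclude, I would verify that $R_\A$ as in \eqref{E:AsimpleV} genuinely represents the class $\A$ of generic simples, i.e.\ that distinct entries of $R_\A$ are pairwise non-isomorphic and non-zero in $\cat^N$. For non-isomorphism, it suffices to observe that $V(n,\tilde\gamma)$ and $V(n',\tilde{\gamma}')$ with $(n,\tilde\gamma)\neq(n',\tilde{\gamma}')$ are non-isomorphic simples in $\cat$ (their highest weights differ), whence Schur's lemma in $\cat$ forces $\Hom_\cat(V(n,\tilde\gamma),V(n',\tilde{\gamma}'))=0$ and a fortiori $\Hom_{\cat^N}(V(n,\tilde\gamma),V(n',\tilde{\gamma}'))=0$. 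For non-vanishing in $\cat^N$, I would appeal to Lemma \ref{L:CompOfdq}: when $0\le n\le\rt'-2$ and $\bar\gamma\notin\X$, each of the factors $\{n+1\}$, $\{\tilde\gamma\}$, $\{\tilde\gamma+n+1\}$ is non-zero (using $1\le n+1<\rt'$ for the first, and $\bar\gamma=\overline{\gamma+n+1}\notin\{\bar 0,\bar{\rt/2}\}$ for the other two), so $\qd(V(n,\tilde\gamma))\neq 0$, the identity is not negligible, and $V(n,\tilde\gamma)\not\simeq_{\cat^N}\{0\}$. I do not foresee a real obstacle: the proof is essentially a compilation, and the only subtlety is keeping straight the relation between the two quotients $\C/\Z$ and $\C/\rt\Z$ when comparing $\X$ to the exclusion set appearing in Theorem \ref{T:Semisimple}.
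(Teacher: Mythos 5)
Your proof is correct and follows essentially the same route as the paper, which simply cites Lemma \ref{L:catNGraded} and Theorem \ref{T:Semisimple}; your additional checks (symmetry and smallness of $\X$, the parity observation that $\{\bar 0,\bar{\rt/2}\}\subseteq\X$, and the non-vanishing of $\qd$) are correct elaborations of details the paper leaves implicit.
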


\subsection{Trace}  Here we will show that the right trace $\qt$ on $\ideal$ induces a trace in $\cat^N$.  
\begin{lemma}
 The full subcategory $\ideal^N:= \cat^N\setminus \{  \C \} $ is a right ideal in $\cat^N$.
\end{lemma}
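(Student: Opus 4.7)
The plan is to verify the two defining properties of a right ideal (Subsection \ref{SS:trace}) for $\ideal^N$ in $\cat^N$, reducing each to the corresponding property already available in $\cat$. As a preliminary observation, note that $\ideal := \cat \setminus \{\C\}$ is itself a right ideal in $\cat$: by Lemma \ref{Generating module} it coincides with $\ideal_{V(0,\tilde{\alpha})}$, and any ideal of the form $\ideal_V$ is tautologically closed under tensoring on the right (since $W \otimes Y$ is a retract of $V \otimes X \otimes Y$ whenever $W$ is a retract of $V \otimes X$) and under retracts in the ambient category (by composing retraction data). Since $\cat^N$ has, by construction, the same objects as $\cat$, the object class of $\ideal^N$ equals that of $\ideal$.

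For tensor closure in $\cat^N$, I will take $V \in \ideal^N$ and $W \in \cat^N$, regard them as objects of $\cat$, and invoke tensor closure of $\ideal$ in $\cat$ to conclude $V \otimes W \in \ideal$, so $V \otimes W \neq \C$ as an object, and hence $V \otimes W \in \ideal^N$.

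For retract closure, suppose $V \in \ideal^N$ and $W \in \cat^N$ is a retract of $V$ in $\cat^N$, witnessed by classes $[i]: W \to V$ and $[p]: V \to W$ with $[p] \circ [i] = [\Id_W]$. The only danger is $W = \C$, which I will rule out by contradiction. Here the crucial input is the convention from Subsection \ref{Purification} that $\Negl(\C, V) = \Negl(V, \C) = 0$, which collapses the hom-quotients touching the unit object. Consequently $[i]$ and $[p]$ lift uniquely to $\cat$-morphisms $i, p$, and the relation $[p] \circ [i] = [\Id_\C]$ lifts to $p \circ i = \Id_\C$ in $\cat$ (using that $\Negl(\C, \C) = 0$ as well). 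Thus $\C$ would be a genuine retract of $V$ in $\cat$; since $V \in \ideal$ and $\ideal$ is closed under retracts in $\cat$, this would force $\C \in \ideal$, contradicting Lemma \ref{Generating module}.

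The main (and really only) point requiring care is the retract step, where one must recognize that retracts in $\cat^N$ with domain or codomain equal to $\C$ lift automatically back to $\cat$-retracts; this is precisely the function of the convention making $\Negl$ vanish on every hom-space involving the unit. Everything else is a formal transfer from the existing right-ideal structure of $\ideal$ in $\cat$.
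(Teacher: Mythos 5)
Your proposal is correct and follows essentially the same route as the paper: tensor closure is a formal transfer from $\cat$, and the key step is ruling out $\C$ as a retract by using the convention $\Negl(\C,V)=\Negl(V,\C)=0$ to lift the retraction data to a genuine $\cat$-retract, contradicting Lemma \ref{Generating module}. Your preliminary remark that $\ideal=\ideal_{V(0,\tilde{\alpha})}$ is a right ideal of $\cat$ just makes explicit what the paper leaves implicit.
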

\begin{proof}
We need to show that $\ideal^N$ satisfies the two conditions to be a right ideal (see Subsection \ref{SS:trace}).  The first condition is true from the definitions of $\cat$ and $\cat^N$.  For the second condition we need to check that the trivial object $\C$ is not a retract of an object in $\cat^N$.  On the contrary, suppose there exists an object $W$ in $\cat^N$ and morphisms $f\in \Hom_{\cat^N}(\C, W)$ and $g\in \Hom_{\cat^N}(W,\C)$ such that $gf =\Id_{\C}$.  But by definition   $\Hom_{\cat^N}(\C, W)=\Hom_{\cat}(\C, W)$ and $ \Hom_{\cat^N}(W,\C)=\Hom_{\cat}(W,\C)$ so $f$ and $g$ give a $\cat$-retract of the trivial module which would imply that $\ideal=\cat$ which is a contradiction to Lemma \ref{Generating module}.  Thus, $\C\notin\cat^N$.
\end{proof}

\begin{lemma}
For $V\in  \ideal^N$ the assignment $ \qtN_V:\End_{\cat^N}(V)\to \C$ given by $[f]\mapsto \qt_V(f)$ is a well defined linear function.  Moreover, the family $\{\qtN_V\}_{V\in  \ideal^N}$ is a right trace on $\ideal^N$.  
\end{lemma}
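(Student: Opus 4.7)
The plan is to verify well-definedness, then the two right-trace axioms, and each reduces to the corresponding property of $\qt$ on $\ideal$ once we check that passing to the quotient is compatible with composition, tensor product, and partial trace.

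For well-definedness: suppose $f,f'\in \End_\cat(V)$ represent the same class in $\End_{\cat^N}(V)$. Then $f-f'\in\Negl(V,V)$, so by definition $\qt_V((f-f')\circ g)=0$ for every $g\in\End_\cat(V)$; specializing to $g=\Id_V$ gives $\qt_V(f)=\qt_V(f')$. $\C$-linearity of $\qtN_V$ follows immediately from the $\C$-linearity of $\qt_V$ together with the $\C$-linearity of the projection $\End_\cat(V)\to\End_{\cat^N}(V)$.

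For the cyclicity axiom: given $[f]\in\Hom_{\cat^N}(V,U)$ and $[g]\in\Hom_{\cat^N}(U,V)$ with $U,V\in\ideal^N$, lift to representatives $f\in\Hom_\cat(V,U)$ and $g\in\Hom_\cat(U,V)$. Since composition in $\cat^N$ is induced from composition in $\cat$, we have $[g]\circ[f]=[g\circ f]$ and $[f]\circ[g]=[f\circ g]$, and the identity $\qt_V(g\circ f)=\qt_U(f\circ g)$ in $\ideal$ translates directly to $\qtN_V([g]\circ[f])=\qtN_U([f]\circ[g])$. For the partial trace axiom: take $U\in\ideal^N$, $W\in\cat^N$, and $[h]\in\End_{\cat^N}(U\otimes W)$; by the previous lemma $\ideal^N$ is a right ideal, so $U\otimes W\in\ideal^N$ and the left-hand side $\qtN_{U\otimes W}([h])$ is defined. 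Lifting to $h\in\End_\cat(U\otimes W)$ and using that the pivotal structure on $\cat^N$ is induced from $\cat$, the quantity
\[
(\Id_U\otimes\tev_W)(h\otimes\Id_{W^*})(\Id_U\otimes\coev_W)\in\End_\cat(U)
\]
projects to the analogous expression in $\End_{\cat^N}(U)$; applying property~(\ref{I:Prop2Trace}) of $\qt$ to $h$ and then passing to classes yields the desired equality.

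The main subtlety — indeed the only point where anything nontrivial happens — is the well-definedness step, since the rest is a purely formal transport along the quotient functor $\mathcal{F}:\cat\to\cat^N$. The key observation that makes well-definedness work is that the definition of negligibility already incorporates testing against $g=\Id_V$, which is exactly what is needed to control $\qt_V(f)-\qt_V(f')=\qt_V(f-f')$. Once that is in place, every remaining verification follows by choosing representatives, invoking the corresponding axiom for the right trace $\qt$ on $\ideal$, and projecting the resulting identity to $\cat^N$.
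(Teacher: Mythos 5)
Your proof is correct and follows essentially the same route as the paper's: well-definedness via testing negligibility against $\Id_V$, and then both right-trace axioms by lifting representatives, applying the corresponding property of $\qt$ on $\ideal$, and projecting back to $\cat^N$.
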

\begin{proof}
We need to show $\qtN_V$ does not depend on the representative of $[f]$ in
$$\End_{\cat^N}(V)=\Hom_{\cat^N}(V,V)=\Hom_{\cat}(V,V)/Negl(V,V).$$
Suppose $[f]=[g]$ then $f=g+h$ for some $h\in Negl(V,V)$.  Then $ \qt_V(f)= \qt_V(g+h)=\qt_V(g)$, implying $\qtN_V([f])=\qtN_V([g])$.  

In order to prove that $\qtN_V$ is a right trace on $\ideal^N$, we have to prove that this satisfies the conditions $1)$ and $2)$ from the definition.

1) Let $U,V\in \ideal^N$ and $[f]\in \Hom_{\cat^N}(V,U), [g] \in \Hom_{\cat^N}(U,V)$. Let $f\in \Hom_{\cat}(V,U), g \in \Hom_{\cat}(U,V)$ such that the class of $f$ and $g$ in $\cat^N$ are $[f]$ and $[g]$ respectively. Then
$$\qtN_V([g] [f])=\qtN_V([g f])= \qt_V(g f)=\qt_U(f  g)=\qtN_U([f g])=\qtN_U([f] [g]).$$
2) Consider $U\in \ideal^N$ and $W\in \cat$ and $f\in \End_\cat^N(U\otimes W)$. Let $f \in \End_\cat(U\otimes W)$ such that the class of $f$ in $\cat$ is $[f]$.
Then we obtain:  
$$ \qtN_{U\otimes W}\bp{[f]}=\qt_{U\otimes W}\bp{f}=\qt_U \bp{(\Id_U \otimes \tev_W)(f\otimes \Id_{W^*})(\Id_U \otimes \coev_W)}=$$
$$=\qtN_U \bp{([\Id_U] \otimes [\tev_W])([f]\otimes [\Id_{W^*}])([\Id_U] \otimes [\coev_W)]}$$
The previous two equalities conclude the statement.
\end{proof}

\subsection{T-ambi pair}
Let $\qd:R_\A\to \C$ be the function given in Equation   \eqref{E:FromulaQDVnalpha}, in other words:
\begin{equation}\label{E:FromulaQDVnalpha22}
\qd(V(n,\tilde{\alpha}))=\frac{\{n+1\}}{\{1\}\{\tilde\alpha\}\{\tilde\alpha+n+1\}}
\end{equation}
for   $V(n,\tilde{\alpha})\in R_\A$.  Extend this function to $\A$ by requiring $\qd(V)=\qd(V(n,\tilde{\alpha}))$ if $V$ is isomorphic to $V(n,\tilde{\alpha})$.   
\begin{lemma}\label{L:tambiPairCatN}
The pair $(\A,\qd)$ is a t-ambi pair in $\cat^N$.  
\end{lemma}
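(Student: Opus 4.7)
My plan is to apply Theorem \ref{T:Bt-ambi} to the right trace $\qtN$ on $\ideal^N$ in $\cat^N$. That theorem produces a t-ambi pair $(\B, \qd)$ where $\B$ consists of simple objects $V \in \ideal^N \cap (\ideal^N)^*$ satisfying $\qd(V) = \qd(V^*)$. Since the t-ambi condition is a statement about cutting presentations of closed ribbon graphs and hence restricts to any sub-class of the simples, it will suffice to show $\A \subseteq \B$.

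First I will confirm the ideal memberships and simplicity. An object lies in $\ideal^N = \cat^N \setminus \{\C\}$ precisely when it is a non-trivial module, and every $V = V(n,\tilde\alpha)\in\A$ has dimension $4(n+1)\geq 4$, so $V\in\ideal^N$. Moreover, from Theorem \ref{T:Structure}, $V^* \cong V(n, -\tilde\alpha - \tilde n - \tilde 1)$ has the same dimension, so $V^*\in\ideal^N$ as well. Simplicity of $V$ in $\cat^N$ is built into the definition of $\A$. One can also observe that $V^*$ again lies in $\A$: since in $\C/\Z$ one has $\overline{-\alpha - n - 1} = -\bar\alpha$ and $\X = \frac12\Z/\Z$ is symmetric under negation, $\bar\alpha\notin\X$ forces $-\bar\alpha\notin\X$.

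Next I will verify that $\qd$ takes nonzero values and satisfies $\qd(V)=\qd(V^*)$. A short case check on the parity of $\rt$ shows $\{\bar 0, \bar{\frac\rt2}\}\subseteq \X$; combined with $n+1\in\Z$ and the hypothesis $\bar\alpha\notin\X$, this implies $\{\tilde\alpha\}$ and $\{\tilde\alpha+n+1\}$ are nonzero, while $0\leq n\leq\rt'-2$ gives $\{n+1\}\neq 0$, so the formula \eqref{E:FromulaQDVnalpha22} yields $\qd(V)\in\C^\times$. A direct computation using $\{-x\} = -\{x\}$ then gives
\[
\qd(V^*) \;=\; \frac{\{n+1\}}{\{1\}\{-\tilde\alpha - n - 1\}\{-\tilde\alpha\}} \;=\; \frac{\{n+1\}}{\{1\}\{\tilde\alpha\}\{\tilde\alpha + n + 1\}} \;=\; \qd(V),
\]
and the isomorphism invariance $\qd(V)=\qd(V')$ for $V\cong V'$ in $\A$ holds by the very definition of the extension of $\qd$ from $R_\A$ to $\A$.

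Putting the two paragraphs together gives $\A\subseteq \B$, so Theorem \ref{T:Bt-ambi} applied to $\qtN$ on $\ideal^N$ proves $(\A, \qd)$ is a t-ambi pair in $\cat^N$. No step looks like a substantial obstacle, since essentially all the work was done in constructing the right trace on $\ideal^N$ and pinning down the modified dimension formula in Lemma \ref{L:CompOfdq}; the only mild point of care is the initial ideal-membership check, which could have failed had the dual of a generic simple been the unit, but the dimension bound $4(n+1)\geq 4$ rules this out at once.
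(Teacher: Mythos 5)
Your proposal is correct and follows essentially the same route as the paper: apply Theorem \ref{T:Bt-ambi} to the right trace $\qtN$ on $\ideal^N$ to get the t-ambi pair $(\B,\qdN)$, then show $\A\subseteq\B$ using $V(n,\tilde\alpha)^*\cong V(n,-\tilde\alpha-\tilde n-\tilde 1)$ and the symmetry of the dimension formula. Your extra checks (non-vanishing of $\qd$, ideal membership of $V$ and $V^*$) are details the paper leaves implicit but are correctly handled.
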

\begin{proof}
Consider $\qdN$ the modified dimension on $\ideal^N$ coming from the right trace $\qtN$ on $\cat^N$.

Let $\B:= \{ V \in \ideal^N \bigcap (\ideal^N)^{*}\mid V  \text{ simple, } \qdN(V)=\qdN(V^{*})  \}$.
From Theorem \ref{T:Bt-ambi}, it follows that $(\B, \qd^N)$ is a t-ambi pair.
We notice that $(\ideal^N)^{*}=\ideal^N$.

We will prove that $\A \subseteq \B$ and that $\qdN$ is determined by Equation \eqref{E:FromulaQDVnalpha22}.
Let $V\in \A$.  By definition there exists $ 0\leq n\leq\rt'-2$ and $ \tilde{\gamma}\in \C/\rt\Z$ with $\bar{\gamma}\notin \X$ such that $V\simeq V(n, \tilde{\gamma})$.  We have
$$\qdN(V(n, \tilde{\gamma}))=\qtN([\Id_V(n, \tilde{\gamma})])= \qt(\Id_V(n, \tilde{\gamma}))=\qd(V(n, \tilde{\gamma}))$$ 
and so $\qdN(V(n, \tilde{\gamma})) $ is given by the formula in  Equation \eqref{E:FromulaQDVnalpha22}.

Since $ V(n,\tilde{\alpha } )^{\ast }=V(n, -\tilde{\alpha }-\tilde{n}-\tilde{1})$, Equation \eqref{E:FromulaQDVnalpha22} implies
$$
\qd(V(n,\tilde{\alpha}))=\qd((V(n,\tilde{\alpha}))^*).
$$
We conclude that
$$\qdN(V(n,\tilde{\alpha}))=\qdN((V(n,\tilde{\alpha}))^*)$$ for any $V \in \A$.
This shows that $\A \subseteq \B$.   Thus, since $(\B, \qdN)$ is a t-ambi pair, it is easy to check that  $(\A, \qd)$ is a t-ambi pair.
\end{proof}

\subsection{The $\bb$ map} \label{SS:bbmap}
Here we show $\cat^N$ has a map $\bb$ as in the definition of a relative $G$-spherical category.  To do this we need the following technical lemmas.

\begin{lemma} \label{L:isomN}
Let $L,R\in \cat^N$ with $L\in \cat^N_g$, $R\in \cat^N_h$ with $g,h\notin\{ \bar{0}, \bar{\frac{\rt}{2}}\}$.  Suppose $L=_{\cat^N}L_1\oplus L_2$ and $R=_{\cat^N}R_1 \oplus R_2$ such that 
$$L\simeq_{\cat^N}R \;\text{ and } \; L_1\simeq_{\cat^N}R_1 .$$
Then 
$$L_2\simeq_{\cat^N}R_2 .$$
\end{lemma}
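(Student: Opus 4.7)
The plan is to reduce the problem to the fact that $\cat^N_g$ is semi-simple for $g$ outside the small subset $\X$, and then cancel summands by comparing multiplicities of simples.

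First I would show that the hypothesis $L \simeq_{\cat^N} R$ forces $g = h$. Indeed, an isomorphism in $\cat^N$ lifts to a non-zero element of $\Hom_{\cat}(L,R)$ (since $\Hom_{\cat^N}(L,R) = \Hom_{\cat}(L,R)/\Negl(L,R)$), and condition (5) in the definition of a $\Gr$-graded category (Subsection~\ref{SS:GSpher}) then gives $g=h$. Since $g \notin \X = \frac12\Z/\Z$, Theorem~\ref{T:Semisimple} (and Corollary~\ref{C:GenFinSSPiv}) applies to give that $\cat^N_g$ is finitely semi-simple with representing set of simples
$$R_{\A,g} := \bigl\{V(n,\tilde\gamma) : 0 \le n \le \rt'-2,\ \tilde\gamma \in \C/\rt\Z,\ \bar\gamma = g\bigr\}.$$
All six objects $L, L_1, L_2, R, R_1, R_2$ therefore lie in this semi-simple subcategory.

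Next I would invoke uniqueness of decomposition. For any $M \in \cat^N_g$ and any $S \in R_{\A,g}$, define the multiplicity
$$m_S(M) := \dim_\C \Hom_{\cat^N_g}(S,M).$$
Since $S$ is simple, $\End_{\cat^N}(S) = \C\,\Id_S$, and semi-simplicity plus Schur's lemma give $M \simeq_{\cat^N} \bigoplus_{S \in R_{\A,g}} S^{\oplus m_S(M)}$ with $m_S(M)$ independent of the chosen decomposition. The multiplicities are additive on direct sums, so
$$m_S(L) = m_S(L_1) + m_S(L_2), \qquad m_S(R) = m_S(R_1) + m_S(R_2).$$

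Finally, the hypotheses $L \simeq_{\cat^N} R$ and $L_1 \simeq_{\cat^N} R_1$ yield $m_S(L) = m_S(R)$ and $m_S(L_1) = m_S(R_1)$ for every $S \in R_{\A,g}$. Subtraction gives $m_S(L_2) = m_S(R_2)$ for all $S$, and uniqueness of the decomposition of $L_2$ and $R_2$ into simples therefore produces the isomorphism $L_2 \simeq_{\cat^N} R_2$. The only technical point is ensuring that the decomposition into simples in $\cat^N_g$ is indeed unique up to permutation, but this is automatic in any semi-simple $\C$-linear category whose simples satisfy Schur's lemma; the real content has already been established in Theorem~\ref{T:Semisimple}.
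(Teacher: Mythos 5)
Your proof is correct, and it reaches the conclusion by a noticeably more direct route than the paper. Both arguments ultimately reduce to semi-simplicity of $\cat^N_g$ (Theorem \ref{T:Semisimple}) plus a comparison of multiplicities of simple summands, but you carry out the count entirely inside $\cat^N$, using $m_S(M)=\dim\Hom_{\cat^N}(S,M)$ and its additivity over direct sums. The paper instead lifts everything back to $\cat$: its central technical step is to prove $\Negl(L,R)=0$ by observing that a non-negligible component between $S_i$-isotypic pieces would have modified trace a non-zero multiple of $\qd(S_i)\neq 0$; it then upgrades the $\cat^N$-isomorphisms to genuine module isomorphisms, matches isotypic components, and constructs an explicit isomorphism $\tilde\phi$ carrying $L_1$ onto $R_1$ so that its restriction gives $L_2\simeq R_2$. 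Your version buys brevity and avoids any appeal to the modified trace; the paper's version buys the stronger intermediate fact that $\Hom_{\cat}(L,R)=\Hom_{\cat^N}(L,R)$ for such objects. One point you should make explicit rather than call ``automatic'': the vanishing $\Hom_{\cat^N}(S,S')=0$ for non-isomorphic simples $S,S'\in R_{\A}$ does not follow formally from the paper's definition of a semi-simple $\FK$-category (which only requires $\End(S)=\C\,\Id_S$); here it holds because the simples of $\cat^N_g$ are represented by irreducible $\Uq$-modules $V(n,\tilde\gamma)$, so $\Hom_{\cat}(S,S')=0$ already and the quotient by negligibles is a fortiori zero. With that observation (and the easy remark that $L_2$ and $R_2$, being direct summands of objects of $\cat^N_g$, again lie in $\cat^N_g$ and are therefore semi-simple), your cancellation argument is complete.
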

\begin{proof}
Using Theorem \ref{T:Semisimple}, we have that both $L$ and $R$ are semi-simple in $\cat^N$. More precisely, there exists $ N \in \N,$ and $ S_1,...,S_N \in R_\A$ all different such that:
$$L=\eta_1 S_1\oplus...\oplus \eta_N S_N$$
$$R=\eta'_1 S_1\oplus...\oplus \eta'_N S_N\oplus J$$ where $\eta_i,\eta'_i \in \N,$ are the multiplicities of the simple object $S_i$ and $J$ is a direct sum of elements of  $R_A$ which are all different than $ S_i, i\in \{1,...,N \} $. 
As an observation, from the computation of $\qd$ we have:
$$ \qd(V)\not=0  \ \ \ \ \ \ \ \  \text{ for all }   \ \ \ \ \ \ \ \    V \in R_A,$$
in particular $\qd(S_i)\not=0, $ for all $ i \in \{1,...,N \}$.

We have that:
\begin{align*}\Hom_{\cat}(L,R)&=\Hom_{\cat}(\eta_1 S_1\oplus...\oplus \eta_N S_N, \eta'_1 S_1\oplus...\oplus \eta'_N S_N\oplus J)\\
&= \bigoplus_{i,j}(\Hom_{\cat}(\eta_i S_i, \eta'_j S_j))\oplus\bigoplus_{i}(\Hom_{\cat}(\eta_i S_i, J)).
\end{align*}
We notice that $\Hom(S_i,S_j)=0$ for $i \not=j$ and $\Hom(S_i,J)=0$ since J has no $S_i$-isotipic components so:
$$\Hom_{\cat}(L,R)=\bigoplus_{i}(\Hom_{\cat}(\eta_i S_i, \eta'_i S_i)).$$
Now we will study the negligible morphisms from this space.
By definition $\Negl(L,R)\subseteq \Hom_{\cat}(L,R)$ as a vector subspace.
From the last two relations we obtain that:
$$\Negl(L,R)=\bigoplus_{i}(\Negl(\eta_i S_i, \eta'_i S_i)).$$
We will prove that actually we have no negligible morphisms between isotipic components of $S_i$.  

Suppose that there exists $ f \in Negl (\eta_i S_i, \eta'_i S_i)$ which is non-zero.
For $k \in \{1,...,N \}$, denote by $\iota_k: S_i \rightarrow \eta_i S_i $ and $\pi_k: \eta'_i S_i\rightarrow S_i$ the inclusion and projection of the $k^{th}$ component.

Since $f$ is non-zero, then there exists $ k,l \in \{1,...,N \} $ such that $\pi_l \circ f \circ \iota_k\not=0$.  Also, since $S_i$ is simple in $\cat$: 
$$\pi_l \circ f \circ \iota_k=\pi_l \circ f \circ \iota_k(1) \Id_{S_i}.$$  
At the level of the modified trace we have:
$$
\qt_{S_i}(\pi_l \circ f \circ \iota_k)
=(\pi_l \circ f \circ \iota_k(1))\qt_{S_i}(\Id_{S_i})
=(\pi_l \circ f \circ \iota_k(1))  \qd(S_i)\not=0.$$
From the properties of $\qt$, we have:
$$\qt_{S_i}(\pi_l \circ f \circ \iota_k)=\qt_{\eta'_i S_i}( f \circ \iota_k \circ \pi_l )=0$$
(since $f$ is negligible).  

The last two equalities lead to a contradiction. 
We conclude that $Negl(L,R)=\{ 0 \}$ and so:
$$\Hom_{\cat}(L,R)= \Hom_{\cat^N}(L,R).$$
Now let $[\phi] \in \Hom_{\cat^N}(L,R)$ be an isomorphism.
Consider $\phi \in \Hom_{\cat}(L,R)$ that gives $[\phi]$ in $\cat^N$.
From the previous considerations, $\phi: L \rightarrow R$ is an isomorphism in $\cat$. 

Using this, we obtain that $J= \{ 0 \}$ (it is not possible to have more isotipic components in $R$ than in $L$). Also, since we are in a category of representations which are semi-simple and morphisms between representations, we obtain that 
$$\eta_i=\eta'_i, \forall i\in \{ 1,...,N \}.$$
So now, both $R$ and $L$ are semi-simple modules in $\cat$ with the same isotipic decomposition.

Now both $L_1$ and $R_1$ are direct summands in $L$ and $R$. It means that each of them has a semi-simple decomposition with modules from the set $S_i$.
But $L_1$ and $R_1$ are isomorphic in $\cat^N$.
Using the same argument as in the first part with $L$ and $R$, we obtain that $L_1$ and $R_1$ are isomorphic in $\cat$.
It means that they have the same isotipic decompositions with the same multiplicities.
Let us compose $\phi$ to the right with an automorphism of $L$ that makes a permutation on the isotipic components such that the ones corresponding to $L_1$ are sent onto the ones corresponding to $\phi^{-1}(R_1)$ respectively.
This means that we obtain an isomorphism $$\tilde{\phi}:L\rightarrow R$$ such that $$\tilde{\phi}(L_1)=R_1.$$
We conclude that 
$$\tilde{\phi}|_{L_2}: L_2\rightarrow R_2 $$
is an isomorphism in $\cat$ and also in $\cat^N$.
\end{proof}     

 \begin{lemma}\label{L:DecmnINTO0m+n}
 For all $\tilde{\alpha}, \tilde{\beta} \in \C/\rt\Z$ and $n\in \N$ such that $\bar{\alpha}, \bar{\beta}, \bar{\alpha}+\bar{\beta} \notin \{ \bar{0}, \bar{\frac{\rt}{2}}  \}$ and $n\leq \rt'-2$ then given $m\leq n$ we have
 $$
 V(m,\tilde\alpha)\otimes  V(n,\tilde\beta)\simeq_{\cat^N} V(0,\tilde\alpha)\otimes \Big(V(n+m,\tilde\beta)\oplus   V(n+m-2,\tilde\beta+1)\oplus \cdots \oplus V(n-m,\tilde\beta+m)\Big)
 $$
 where we set $V(k,\tilde\beta)=0$ if $k\geq \rt'-1$.  
 \end{lemma}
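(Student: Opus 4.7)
The plan is to induct on $m\in\{0,1,\ldots,n\}$, fixing $n\leq\rt'-2$ and generic parameters $\tilde\alpha,\tilde\beta$. The base case $m=0$ is immediate, since both sides of the claim reduce to $V(0,\tilde\alpha)\otimes V(n,\tilde\beta)$. For the inductive step, I assume the lemma holds for all $m'\leq m$ (with all admissible $n'\leq\rt'-2$ and all generic parameters), and establish it for $m+1\leq n$.

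The key manoeuvre is to use the Decomposition Lemma \ref{L:DecompV0Vn} to access $V(m+1,\tilde\alpha)$. I choose $\tilde\gamma,\tilde\delta\in\C/\rt\Z$ with $\tilde\gamma+\tilde\delta=\tilde\alpha$ and $\bar\gamma,\bar\delta\notin\{\bar 0,\bar{\rt/2}\}$ (possible by the genericity of $\bar\alpha$). Lemma \ref{L:DecompV0Vn} then gives, in $\cat$,
\[
V(0,\tilde\gamma)\otimes V(m,\tilde\delta)\simeq V(m+1,\tilde\alpha)\oplus V(m,\tilde\alpha)\oplus V(m-1,\tilde\alpha+1)\oplus V(m,\tilde\alpha+1).
\]
Tensoring with $V(n,\tilde\beta)$ yields in $\cat^N$
\begin{equation}\label{eq:pivot}
V(0,\tilde\gamma)\otimes V(m,\tilde\delta)\otimes V(n,\tilde\beta)\simeq_{\cat^N}V(m+1,\tilde\alpha)\otimes V(n,\tilde\beta)\,\oplus\,\Sigma,
\end{equation}
where $\Sigma$ is the direct sum of $V(m',\tilde\alpha')\otimes V(n,\tilde\beta)$ over $(m',\tilde\alpha')\in\{(m,\tilde\alpha),(m-1,\tilde\alpha+1),(m,\tilde\alpha+1)\}$; each such factor is controlled by the inductive hypothesis since $m,m-1\leq n$.

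In parallel, I compute the left side of \eqref{eq:pivot} along a different route. First, apply the inductive hypothesis to the inner factor $V(m,\tilde\delta)\otimes V(n,\tilde\beta)$ (valid because $m\leq n-1<n$), then decompose $V(0,\tilde\gamma)\otimes V(0,\tilde\delta)\simeq_\cat V(0,\tilde\alpha)\oplus V(1,\tilde\alpha)\oplus V(0,\tilde\alpha+1)$ using Lemma \ref{L:DecompV0Vn} with the convention $V(-1,\cdot)=0$. Distributing, the cross terms $V(1,\tilde\alpha)\otimes V(n+m-2i,\tilde\beta+i)$ are reduced by the inductive hypothesis at $m'=1$ (applicable since $n+m-2i\geq n-m\geq 1$ when $m\geq 1$, while for $m=0$ no such cross terms arise). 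After this, both sides of \eqref{eq:pivot} are expressed explicitly as direct sums of modules of the shape $V(0,\tilde\eta)\otimes V(k,\tilde\mu)$ in $\cat^N$, which can be fully atomized by one more application of Lemma \ref{L:DecompV0Vn}.

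Cancelling $\Sigma$ from \eqref{eq:pivot} via Lemma \ref{L:isomN} isolates the desired $V(m+1,\tilde\alpha)\otimes V(n,\tilde\beta)$, and it remains to check that what is left matches $V(0,\tilde\alpha)\otimes\bigoplus_{i=0}^{m+1}V(n+m+1-2i,\tilde\beta+i)$. The hard part will be precisely this final bookkeeping: one must track the multiplicity of each simple $V(r,\tilde\alpha+\tilde\beta+s)$ arising from the various iterated applications of the Decomposition Lemma and verify the Clebsch--Gordan-type identity that emerges. Added care is needed at the alcove boundary, where some indices reach or exceed $\rt'-1$; there the vanishing $V(\rt'-1,\cdot)\simeq_{\cat^N}0$ from Corollary \ref{L:negl} together with the stated convention $V(k,\cdot)=0$ for $k\geq\rt'-1$ must be applied on both sides in a matching fashion, so that the cancellation remains consistent.
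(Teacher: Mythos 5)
Your overall strategy --- computing $V(0,\tilde\gamma)\otimes V(m,\tilde\delta)\otimes V(n,\tilde\beta)$ in two different ways and cancelling the common summands with Lemma \ref{L:isomN} --- is exactly the route the paper takes. However, there is a genuine gap at the very first inductive step, from $m=0$ to $m=1$. Your second computation there reads: apply the inductive hypothesis to the inner factor $V(0,\tilde\delta)\otimes V(n,\tilde\beta)$ (vacuous for $m'=0$), then decompose $V(0,\tilde\gamma)\otimes V(0,\tilde\delta)\simeq V(0,\tilde\alpha)\oplus V(1,\tilde\alpha)\oplus V(0,\tilde\alpha+1)$ and distribute. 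Your parenthetical claim that ``for $m=0$ no such cross terms arise'' is false: the summand $V(1,\tilde\alpha)$ does appear in that decomposition, and the resulting cross term $V(1,\tilde\alpha)\otimes V(n,\tilde\beta)$ is precisely the unknown you are trying to determine. Consequently your two computations of the triple tensor product coincide term by term when $m=0$, and cancelling $\Sigma$ yields only the tautology $V(1,\tilde\alpha)\otimes V(n,\tilde\beta)\simeq_{\cat^N} V(1,\tilde\alpha)\otimes V(n,\tilde\beta)$. Since every subsequent step of your induction invokes ``the inductive hypothesis at $m'=1$'' to resolve the cross terms $V(1,\tilde\alpha)\otimes V(n+m-2i,\tilde\beta+i)$, the entire argument rests on a case that has not been established.

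The missing ingredient is Corollary \ref{CorDecomp}. The paper treats $m=1$ as a separate base case: it first decomposes the \emph{other} factor, writing $V(0,\tilde\alpha-\tilde\gamma)\otimes V(n,\tilde\beta)$ as a sum of four simples via Lemma \ref{L:DecompV0Vn}, tensors with $V(0,\tilde\gamma)$, and then uses Corollary \ref{CorDecomp} (the weight-shift invariance $V(0,\tilde\alpha)\otimes V(n,\tilde\beta)\simeq V(0,\widetilde{\alpha+\epsilon})\otimes V(n,\widetilde{\beta-\epsilon})$) to recognize that the summands $V(0,\tilde\gamma)\otimes V(n,\tilde\alpha-\tilde\gamma+\tilde\beta)$ and $V(0,\tilde\gamma)\otimes V(n,\tilde\alpha-\tilde\gamma+\tilde\beta+1)$ are isomorphic to $V(0,\tilde\alpha)\otimes V(n,\tilde\beta)$ and $V(0,\tilde\alpha+1)\otimes V(n,\tilde\beta)$ respectively; only after this identification does Lemma \ref{L:isomN} yield nontrivial information about $V(1,\tilde\alpha)\otimes V(n,\tilde\beta)$. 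Once the $m=1$ case is in hand, your inductive step --- and the Clebsch--Gordan bookkeeping you defer, which does close up exactly as you predict, with the $V(\rt'-1,\cdot)\simeq_{\cat^N}0$ truncation handled consistently on both sides --- goes through essentially as in the paper.
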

 \begin{proof}
We will show the statement by induction on $m$.   The case $m=0$ is true from Lemma \ref{L:DecompV0Vn}.  Next, we will check the case $m=1$. 

Let $\tilde{\alpha}, \tilde{\beta} \in \C/\rt\Z$ and $n\in \N$ such that $\bar{\alpha}, \bar{\beta}, \bar{\alpha}+\bar{\beta} \notin \{ \bar{0}, \bar{\frac{\rt}{2}}  \}$ and $1\leq n\leq \rt'-2$.  Choose $\tilde\gamma\in \C/\rt\Z$  such that $ \bar{\gamma}, \bar{\alpha}- \bar{\gamma}, \bar{\alpha}- \bar{\gamma}+\bar{\beta} \notin \{ \bar{0}, \bar{\frac{\rt}{2}}  \}$.
From Lemma \ref{L:DecompV0Vn} we have 
\begin{multline*}
V(0,\tilde{\alpha }- \tilde\gamma)\otimes V(n,\tilde{\beta } )\simeq_{\cat^N}V(n,\tilde{\alpha }- \tilde\gamma+\tilde{\beta})\oplus V(n,\tilde{\alpha }- \tilde\gamma+\tilde{\beta}+1)\\
\oplus  (1- \delta_{\rt'-2,n}) V(n+1,\tilde{\alpha }- \tilde\gamma+\tilde{\beta})\oplus V(n-1,\tilde{\alpha }- \tilde\gamma+\tilde{\beta}+1)
\end{multline*}
in $\cat^N$. Take the tensor product of both sides of this equation with $V(0,\tilde{\gamma } )$.  Then decomposing the left side by grouping the first two simple modules together we have
\begin{multline*}
\Big(V(0,\tilde{\gamma } )\otimes V(0,\tilde{\alpha }-\tilde{\gamma} )\Big)\otimes V(n,\tilde{\beta } )\simeq_{\cat^N} \left( V(0,\tilde{\alpha} )\otimes V(n,\tilde{\beta } )\right)\\
\oplus \left(V(0,\tilde{\alpha} +1)\otimes V(n,\tilde{\beta } )\right)
\oplus \left( V(1,\tilde{\alpha}  )\otimes V(n,\tilde{\beta } )\right)
\end{multline*}
On the other hand the left side is
\begin{multline*}
\left(V(0,\tilde{\gamma } )\otimes V(n,\tilde{\alpha }-\tilde{\gamma }+\tilde{\beta})\right)
\oplus \left(V(0,\tilde{\gamma } )\otimes V(n,\tilde{\alpha }-\tilde{\gamma }+\tilde{\beta}+1)\right)\\
\oplus  (1- \delta_{\rt'-2,n}) \left(V(0,\tilde{\gamma } )\otimes  V(n+1,\tilde{\alpha }-\tilde{\gamma }+\tilde{\beta})\right)\\
\oplus \left(V(0,\tilde{\gamma } )\otimes V(n-1,\tilde{\alpha }-\tilde{\gamma }+\tilde{\beta}+1)\right)
\end{multline*}
Now, using Corollary \ref{CorDecomp} we see the first two tensor products in the last two expressions are the same direct sum of simple modules so they are isomorphic.  Thus,  Lemma \ref{L:isomN} implies
\begin{multline*}
  V(1,\tilde{\alpha}  )\otimes V(n,\tilde{\beta } )
\simeq_{\cat^N}
 (1- \delta_{\rt'-2,n}) \left(V(0,\tilde{\gamma } )\otimes  V(n+1,\tilde{\alpha }-\tilde{\gamma}+\tilde{\beta})\right)\\
\oplus \left(V(0,\tilde{\gamma } )\otimes V(n-1,\tilde{\alpha }-\tilde{\gamma}+\tilde{\beta}+1)\right).
\end{multline*}
Using Corollary \ref{CorDecomp} we see the right hand side of this equation is isomorphic to 
$$
(1- \delta_{\rt'-2,n}) \left(V(0,\tilde{\alpha } )\otimes  V(n+1,\tilde{\beta})\right)\\
\oplus \left(V(0,\tilde{\alpha } )\otimes V(n-1,\tilde{\beta}+1)\right).
$$
Thus, we have proved the lemma for the case $m=1$.  

Now assume the statement is true for $k\leq m$ and we will show the statement holds for $m+1$.   Let $\tilde\alpha,\tilde\beta,\tilde\gamma $ and $n$ be as above.  
Let us denote:
$$E^{\tilde{\beta}}_{m,n}:=\Big(V(n+m,\tilde\beta)\oplus   V(n+m-2,\tilde\beta+1)\oplus \cdots \oplus V(n-m,\tilde\beta+m)\Big).$$

From the induction hypothesis we have
\begin{equation}\label{E:gammamalphanbeta}V(0,\tilde{\gamma})\otimes V(m,\tilde{\alpha}-\tilde{\gamma})\otimes V(n,\tilde{\beta})\simeq_{\cat^N} V(0, \tilde{\gamma}) \otimes V(0,\tilde{\alpha}-\tilde{\gamma})\otimes E^{\tilde{\beta}}_{m,n}.
\end{equation}
Using Lemma \ref{L:DecompV0Vn} to decompose the first tensor product we have the left hand side of Equation \eqref{E:gammamalphanbeta} is isomorphic to
$$\left( V(m, \tilde{\alpha})\otimes V(n, \tilde\beta)\right) \oplus \left(V(m, \tilde{\alpha}+1)\otimes V(n, \tilde\beta)\right) \oplus$$
$$ \oplus \left(V(m-1, \tilde{\alpha}+1)\otimes V(n, \tilde\beta) \right)\oplus 
\left(V(m+1, \tilde{\alpha})\otimes V(n, \tilde\beta)\right)$$
Similarly, the right hand side of Equation  \eqref{E:gammamalphanbeta}  is isomorphic to 
$$ \left(V(0, \tilde{\alpha}) \otimes E^{\tilde{\beta}}_{m,n} \right)\oplus \left(V(0, \tilde{\alpha}+1) \otimes E^{\tilde{\beta}}_{m,n}\right)\oplus \left(V(1, \tilde{\alpha}) \otimes E^{\tilde{\beta}}_{m,n}\right) $$

From the induction hypothesis, the first two terms of the last two expressions are isomorphic, thus from Lemma~\ref{L:isomN} we obtain  
\begin{equation}\label{E:gghe77} \left(V(m-1, \tilde{\alpha}+1)\otimes V(n, \tilde\beta) \right)\oplus 
\left(V(m+1, \tilde{\alpha})\otimes V(n, \tilde\beta)\right) \simeq_{\cat^N} V(1, \tilde{\alpha}) \otimes E^{\tilde{\beta}}_{m,n}.
\end{equation}

Next, we decompose the right side of this equation.  By induction we know that for any $1\leq n'\leq \rt'-2$ and $k\leq m$ we have
\begin{equation*}\label{E:jj7766}
V(1,\tilde{\alpha}  )\otimes V(n',\tilde{\beta } +k )
\simeq_{\cat^N}
 V(0,\tilde{\alpha }) \otimes  \left( V(n'+1,\tilde{\beta}+k) \oplus V(n'-1,\tilde{\beta}+k+1)\right).
 \end{equation*}
Applying this to each term of the sum $E^{\tilde{\beta}}_{m,n}$ we obtain:
$$V(1, \tilde{\alpha}) \otimes E^{\tilde{\beta}}_{m,n}\simeq_{\cat^N} V(0,\tilde{\alpha })\otimes \left(E^{\tilde{\beta}}_{m,n+1}\oplus E^{\tilde{\beta}+1}_{m,n-1} \right)$$
From the definition $E^{\tilde{\beta}}_{m,n}$ we have 
 \begin{align*}V(0,\tilde{\alpha })\otimes & \left( E^{\tilde{\beta}}_{m,n+1}\oplus E^{\tilde{\beta}+1}_{m,n-1}\right)=\\ 
&=V(0,\tilde{\alpha })\otimes \left( E^{\tilde{\beta}}_{m,n+1}\oplus \left( E^{\tilde{\beta}+1}_{m-1,n} \oplus V(n-1-m, \tilde{\beta}+m+1)\right)\right)\\
&= V(0,\tilde{\alpha })\otimes \left(\left( E^{\tilde{\beta}}_{m,n+1}\oplus   V(n-1-m, \tilde{\beta}+m+1)\right)\oplus  E^{\tilde{\beta}+1}_{m-1,n}\right)\\
&=V(0,\tilde{\alpha })\otimes \left( E^{\tilde{\beta}}_{m+1,n}\oplus E^{\tilde{\beta}+1}_{m-1,n}\right)\\
&\simeq_{\cat^N}\left(V(0,\tilde{\alpha })\otimes E^{\tilde{\beta}}_{m+1,n}\right)\oplus 
\left(V(m-1, \tilde{\alpha}+1)\otimes V(n, \tilde\beta)  \right)
\end{align*}
where the isomorphism comes from the induction hypothesis.  
Combining the last two equation and using Lemma \ref{L:isomN} we see that Equation \eqref{E:gghe77} implies
$$
V(m+1, \tilde{\alpha})\otimes V(n, \tilde\beta) \simeq_{\cat^N} V(0,\tilde{\alpha })\otimes E^{\tilde{\beta}}_{m+1,n}
$$
which proves the statement for $m+1$ and concludes the induction step.  
 \end{proof}

Now we use this lemma to show $\cat^N$ has a $\bb$ map.  
  In \cite{GPT2} it is shown how to construct a $\bb$ map  from a character.  Our $\bb$ map will be defined on the representative class of simple objects $R_\A$ and extended to $\A$ by setting $\bb(W)=\bb(V)$ if $W\simeq V$ for $W\in \A$ and $V\in R_\A$.  

 Here a  \emph{character} is a map $\chi:R_{\A}\to \C$ satisfying
\begin{enumerate}
\item $\chi(V^*)=\chi(V)$ for all $ V\in R_\A$,
\item \label{I:chiOfTensorProd} if $V(m,\tilde\alpha), V(n,\tilde\beta)\in R_\A$ such that $ \bar\alpha+\bar\beta\notin \X$ then
  $$\chi(m,\tilde\alpha)\chi(n,\tilde\beta)=\sum_{k,\tilde\gamma}\dim\left(\Hom_{\cat^N}(V(k,\tilde\gamma),V(m,\tilde\alpha)\otimes V(n,\tilde\beta))\right) \chi(k,\tilde\gamma)$$  here for simplicity we denoted $\chi(V(m,\tilde\alpha))=\chi(m,\tilde\alpha)$,
\item for any $g\in G\setminus \X$, the element $\mathcal D_g=\sum_{V\in
    \cat^N_g\cap R_\A}\chi(V)^2 $ of $\C$ is non-zero.
\end{enumerate}
If $\chi$ is a character then Lemma 23 of \cite{GPT2} implies the map $G\setminus \X\to \C, \,g\mapsto \mathcal D_{g}$ is a constant function with value $\mathcal D$.  Moreover, the map $\bb=\frac1{\mathcal D}\chi$ satisfies the properties listed in Definition \ref{D:G-spherical}.  We will now show a character exists.

Let $V(m,\tilde\alpha)$ be in $ R_\A$.    Consider the formal character $\chi(m,\tilde\alpha)=\sum_{k, \tilde\gamma}c_{k,\tilde\gamma}e^ke^{\tilde\gamma}$ of $V(m,\tilde\alpha)$ in $\cat$.  Here $e^k$ and $e^{\tilde\gamma}$ are both formal variables for each $k\in \Z$ and $\tilde\gamma\in \C/\rt\Z$ and $c_{k,\tilde\gamma}$ is the dimension of the $(k,\tilde\gamma)$ weight space determined by the action of $(K_1,K_2)$. 

The variables $e^k$ and $e^{\tilde\gamma}$ of a character $\chi(m,\tilde\alpha)$ can be specialized to $q^k$ and $1$, respectively to obtain a complex number which we denote by $\chi_q(m,\tilde\alpha)\in \C$.  We will show $\chi_q$ is a character in $\cat^N$.   

Using the basis in Theorem \ref{T:Structure} we see
$$\chi_q(m,\tilde\alpha)=(2+q+q^{-1})(q^m+q^{m-2}+\cdots + q^{-m})=(2+q+q^{-1})[m+1].$$  In particular, $\chi_q(\rt'-1,\tilde\alpha)=0$.  Also, $ V(m,\tilde{\alpha } )^{\ast }=V(m, -\tilde{\alpha }-\tilde{m}-\tilde{1})$ which implies  
$\chi_q(V^*)=\chi_q(V)$ for $V\in R_\A$.

 Next, we show property \eqref{I:chiOfTensorProd} holds for $\chi_q$.  We first do this for the case $m=0 $ and   $0\leq n\leq \rt'-2$.   Let $V(0,\tilde{\alpha } ), V(n,\tilde{\beta } )\in R_\A$ such that $ \bar\alpha+\bar\beta\notin \X$.  From Lemma~\ref{L:DecompV0Vn} we have 
\begin{multline}\label{E:DecV0Vn-Cat}
V(0,\tilde{\alpha } )\otimes V(n,\tilde{\beta } )\simeq_{\cat}V(n,\tilde{\alpha }+\tilde{\beta})\oplus  V(n+1,\tilde{\alpha }+\tilde{\beta})\\
\oplus (1- \delta_{0,n})V(n-1,\tilde{\alpha }+\tilde{\beta}+1)\oplus V(n,\tilde{\alpha }+\tilde{\beta}+1)
\end{multline}
in $\cat$.  This implies
$$
\chi(0,\tilde\alpha)\chi(n,\tilde\beta)=\chi(n,\tilde{\alpha }+\tilde{\beta}) +  \chi(n+1,\tilde{\alpha }+\tilde{\beta})+(1-\delta_{0,n})\chi(n-1,\tilde{\alpha }+\tilde{\beta}+1)+\chi(n,\tilde{\alpha }+\tilde{\beta}+1).
$$
By specializing the variables of this equation we have:
\begin{multline}\label{E:DecV0Vn-chi}
\chi_q(0,\tilde\alpha)\chi_q(n,\tilde\beta)=\chi_q(n,\tilde{\alpha }+\tilde{\beta}) +\chi_q(n+1,\tilde{\alpha }+\tilde{\beta})\\
+(1-\delta_{0,n})\chi_q(n-1,\tilde{\alpha }+\tilde{\beta}+1)+\chi_q(n,\tilde{\alpha }+\tilde{\beta}+1).
\end{multline}
Translating  Equation \eqref{E:DecV0Vn-Cat} to $\cat^N$ we have: 
\begin{multline*}\label{E:DecV0Vn-CatN}V(0,\tilde{\alpha } )\otimes V(n,\tilde{\beta } )\simeq_{\cat^N}V(n,\tilde{\alpha }+\tilde{\beta})\oplus (1- \delta_{\rt'-2,n}) V(n+1,\tilde{\alpha }+\tilde{\beta})\\
\oplus (1- \delta_{0,n})V(n-1,\tilde{\alpha }+\tilde{\beta}+1)\oplus V(n,\tilde{\alpha }+\tilde{\beta}+1).
\end{multline*}
Since $\chi_q(\rt'-1,\tilde{\alpha }+\tilde{\beta})=0$ then the last equation 
implies we can rewrite Equation~\eqref{E:DecV0Vn-chi} as 
$$
\chi_q(0,\tilde\alpha)\chi_q(n,\tilde\beta)=\sum_{k,\tilde\gamma}\dim\left(\Hom_{\cat^N}(V(k,\tilde\gamma),V(0,\tilde\alpha)\otimes V(n,\tilde\beta))\right) \chi_q(k,\tilde\gamma)
$$
here all but possibly four homomorphism spaces are zero.  
This implies that if $W\simeq_{\cat^N} \bigoplus_{i=1}^s\left(V(0,\tilde\alpha)\otimes V(n_i,\tilde{\beta}_i)\right)$ where $\bar\alpha,\bar{\beta}_i,\bar\alpha+\bar{\beta}_i\notin \X$ and  $0\leq n_i\leq \rt'-2$ for all $i\in\{1,...,s\}$ then 
\begin{equation}\label{E:chi0nHom}
 \sum_{i=1}^s \chi_q(0,\tilde\alpha)\chi_q(n_i,\tilde{\beta}_i)= \sum_{k,\tilde\gamma}\dim\left(\Hom_{\cat^N}(V(k,\tilde\gamma),W)\right) \chi_q(k,\tilde\gamma).
\end{equation}

Next we consider the general case.  Let $V(m,\tilde\alpha), V(n,\tilde\beta)\in R_\A$ such that $ \bar\alpha+\bar\beta\notin \X$.  A direct computation shows
$$
\chi_q(m,\tilde\alpha)\chi_q(n,\tilde\beta)=(2+q+q^{-1})^2[m+1][n+1].
$$ where  it can be shown that 
 $$
 [m+1][n+1]=[n+1+m]+[n+1+m-2]+...+[n+1-m].
 $$
  This implies,   
\begin{multline*}
\chi_q(m,\tilde\alpha)\chi_q(n,\tilde\beta)\\
=\chi_q(0,\tilde\alpha)\left(\chi_q(n+m,\tilde\beta)+\chi_q(n+m-2,\tilde\beta+1)+...+ \chi_q(n-m,\tilde\beta+m)\right).
\end{multline*}
But from Lemma \ref{L:DecmnINTO0m+n} and Equation \eqref{E:chi0nHom} we have the right side of the last equation is equal to
$$
\sum_{k,\tilde\gamma}\dim\left(\Hom_{\cat^N}(V(k,\tilde\gamma),V(m,\tilde\alpha)\otimes V(n,\tilde\beta))\right) \chi_q(k,\tilde\gamma)
$$
 and we have shown that property   \eqref{I:chiOfTensorProd} holds.  
 
 Finally, we show $\chi_q$ satisfies the last property to be a character. 
  Fix $\tilde\alpha\in \C/\rt\Z$ such that $\bar\alpha=g\notin \X$ then for any $k\in\{0,...,\rt'-1\}$  we have 
$$ \sum_{m=0}^{\rt'-1} \chi_q(m,\tilde\alpha+k)^2 =  c^2\sum_{m=0}^{\rt'-1} [m+1]^2=\frac{c^2}{(q-q^{-1})^2}\sum_{m=0}^{\rt'-1} (q^{m+1}-q^{-m-1})^2$$
where $c=2+q+q^{-1}$.
Let us compute the  sum in this expression: 
\begin{align*}
\sum_{m=0}^{\rt'-1} (q^{m+1}-q^{-m-1})^2
&=\sum_{m=0}^{\rt'-1}\left(q^{2m+2}+q^{-2m-2}-2\right)\\
&=-2\rt'+ q^2\sum_{m=0}^{\rt'-1}q^{2m}+q^{-2}\sum_{m=0}^{\rt'-1}q^{-2m}\\
&=-2\rt'+q^2 \frac{q^{2\rt'}-1}{q^2-1}+q^{-2} \frac{q^{-2\rt'}-1}{q^{-2}-1}=-2\rt'.
\end{align*}
 
Thus, we have
$$\mathcal D_g=\sum_{V\in
    \cat^N_g\cap R_\A}\chi_q(V)^2=  \sum_{m,k=0}^{\rt'-1} \chi_q(m,\tilde\alpha+k)^2 = \sum_{k=0}^{\rt'-1}  \frac{-2\rt'c^2}{(q-q^{-1})^2}= \frac{-2(\rt')^2c^2}{(q-q^{-1})^2}$$
which is non-zero.

 In summary, $\chi_q$ is a character and as explained above leads to a map $\bb=\frac1{\mathcal D}\chi$ satisfies the properties listed in Definition \ref{D:G-spherical}.

\subsection{Main theorem}
Here we summarize the results of this paper in the following theorem.
\begin{theorem}
Let $\Gr=\C/\Z$ and $\X=\frac12\Z/\Z$. 
 Let $\A$ be the set of generic simple objects of $\cat^N$ given in  Equation \eqref{E:AsimpleV}.  Let $\qd:\A\to \C^{\times}$
 be the function defined in Equation \eqref{E:FromulaQDVnalpha22}.  Let $\bb:\A\to \FK$ be the function constructed in Subsection \ref{SS:bbmap}.  With this data $\cat^N$ is a relative $\Gr$-spherical category with basic data and leads to the modified TV-invariant described in Theorem \ref{inveee}.  
\end{theorem}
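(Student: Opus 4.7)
The plan is to assemble the pieces already established throughout the paper. The theorem consists of several assertions: that $\cat^N$ is a generically finitely $\Gr$-semi-simple pivotal $\C$-category (with the prescribed $\Gr$ and $\X$), that $(\A,\qd)$ is a t-ambi pair, that $\bb$ satisfies the character-type identity in Definition \ref{D:G-spherical}, that basic data exists, and that all of this feeds into Theorem \ref{inveee} to produce a modified TV-invariant. Each piece is essentially already proved; I only need to verify that $\X$ and $\Gr$ satisfy the structural requirements, and cite the relevant statements in the correct order.

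First I would show that $\Gr=\C/\Z$ and $\X=\frac12\Z/\Z$ satisfy the smallness and symmetry hypotheses. Symmetry is immediate since $\X$ is a subgroup. For smallness, observe that any finite union of translates $\bigcup_{i=1}^n(g_i+\X)$ is countable, while $\C/\Z$ is uncountable, so $\X$ cannot cover $\Gr$ even after translation. Next, Lemma \ref{L:catNGraded} provides that $\cat^N$ is pivotal $\C$-linear and $\C/\Z$-graded, while Corollary \ref{C:GenFinSSPiv} (which packages Theorem \ref{T:Semisimple}) supplies generic finite semi-simplicity with the given $\X$ and identifies $\A$ via the representatives $R_\A$ in Equation \eqref{E:AsimpleV}.

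Second, I would cite Lemma \ref{L:tambiPairCatN} to conclude that $(\A,\qd)$ with $\qd$ given by Equation \eqref{E:FromulaQDVnalpha22} is a t-ambi pair (Definition \ref{D:G-spherical}\eqref{ID:G-sph6}). For the $\bb$ map (condition \eqref{ID:G-sph7}), I would invoke the character construction of Subsection \ref{SS:bbmap}: the map $\chi_q$ was shown there to be a character and hence yields $\bb=\frac1{\mathcal D}\chi_q$ with the required properties, appealing to Lemma 23 of \cite{GPT2} for the translation from character to $\bb$.

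Third, for basic data I would apply Lemma \ref{L:basicdata}. Since the self-dual elements of $\Gr=\C/\Z$ are exactly $\{\bar 0,\bar{\tfrac12}\}\subset \tfrac12\Z/\Z=\X$, the hypothesis of Lemma \ref{L:basicdata} is satisfied and basic data exists. Finally, with all hypotheses of the relative $\Gr$-spherical category axioms verified and basic data in hand, Theorem \ref{inveee} (quoted from \cite{GPT2}) produces the modified TV-invariant of the triple $(M,L,[\wp])$. The main obstacle in this plan is purely bookkeeping: one must be careful that the t-ambi pair constructed in $\cat^N$ uses the $\qd$ restricted to the representatives in $R_\A$ (not the a priori larger right-trace-derived dimension $\qdN$), which is precisely the content of the comparison argument in Lemma \ref{L:tambiPairCatN}.
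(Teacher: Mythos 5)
Your proposal is correct and follows essentially the same route as the paper, whose proof is simply the citation of Corollary \ref{C:GenFinSSPiv}, Lemma \ref{L:tambiPairCatN}, and Lemma \ref{L:basicdata} (with the $\bb$ map supplied by Subsection \ref{SS:bbmap}). The extra checks you spell out --- smallness and symmetry of $\X=\frac12\Z/\Z$ and the fact that the self-dual elements of $\C/\Z$ lie in $\X$ so Lemma \ref{L:basicdata} applies --- are left implicit in the paper but are exactly the right verifications.
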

\begin{proof}
The proof follows directly from Corollary \ref{C:GenFinSSPiv} and Lemmas \ref{L:tambiPairCatN} and \ref{L:basicdata}.
\end{proof}

\end{document}